\title{D-bar Spark Theory and Deligne Cohomology}
\author{Ning Hao}
\subjclass[2000]{14F43 53C65}
\begin{document}
\maketitle

\newtheorem{thm}{Theorem}[section]
\newtheorem{lem}[thm]{Lemma}
\newtheorem{prop}[thm]{Proposition}
\newtheorem{cor}[thm]{Corollary}
\newtheorem{defn}[thm]{Definition}
\newtheorem{fact}[thm]{Fact}
\newtheorem{claim}[thm]{Claim}
\newtheorem{rem}[thm]{Remark}
\newtheorem{ex}[thm]{Example}
\newtheorem{rmk}[thm]{Remark}

\begin{abstract}
We study the Harvey-Lawson spark characters of level $p$ on complex manifolds.
Presenting Deligne cohomology classes by sparks of level $p$,
we give an explicit analytic product formula for Deligne cohomology.
We also define refined Chern classes in Deligne cohomology for holomorphic vector
bundles over complex manifolds. Applications to algebraic cycles are given.
A Bott-type vanishing theorem in Deligne cohomology for holomorphic foliations is established.
A general construction of Nadel-type invariants is given together with a new proof of
Nadel's conjecture.
\end{abstract}

{\bf \tableofcontents}

\section{Introduction}
In 1970's Cheeger and Simons \cite{CS} introduced the ring of differential characters
$\hat{H}^*(X)$ associated to a smooth manifold $X$. They generalized the Chern-Weil homomorphism
and obtained a refinement of the theory of characteristic classes and characteristic forms
with applications to conformal geometry, foliation theory and more. In 2003, Harvey, Lawson
and Zweck \cite{HLZ} gave a new description of differential characters from a de Rham-Federer
viewpoint and established the ring of de Rham-Federer spark classes
which is isomorphic to the Cheeger-Simons differential characters.
Later, Harvey and Lawson expanded their approach to the theory of differential characters in \cite{HL1}.
They invented a homological apparatus to study differential characters systematically
and introduced many different presentations of differential characters. Central to their theory are
\emph{spark complexes}, \emph{sparks} and \emph{rings of spark characters} which are analogues of cochain complexes,
cocycles and cohomology rings in the usual cohomology theory.
Roughly speaking, a spark complex is a triple of cochain complexes, two of which are contained
in the main one with trivial intersection.
A spark is an element in the main complex such that its differential can be represented (uniquely)
as the sum of elements from the other two complexes. An equivalence relation among sparks is introduced
and the group of spark classes is established.
It turns out that spark complexes are abundant in geometry, topology and physics.
A more striking fact is that the classical secondary invariants like Cheeger-Simons differential characters
can be realized as the groups of spark classes associated to many different spark complexes.
Therefore, these spark complexes give many different presentations of differential characters
just as there are many different presentations of cohomology.
One basic example is the \emph{de Rham-Federer spark complex} introduced in \cite{HLZ}.
This is the complex whose objects are currents whose exterior differentials can be decomposed into
smooth forms and rectifiable currents.
Among many other examples studied in \cite{HL1}, the \emph{smooth hyperspark complex} is closely related to
$n$-gerbes with connections in physics, the \emph{Cheeger-Simons spark complex} is the closest one to the
Cheeger-Simons definition. Furthermore, all spark complexes appearing above are compatible, which implies
the groups of spark classes associated to them are all isomorphic. We call these groups the Harvey-Lawson
spark characters of a smooth manifold collectively and denote them by $\hat{\mathbf{H}}^{*}(X)$.

In \cite{HLZ}, a ring structure was constructed on the group of de Rham-Federer spark classes $\hat{\mathbf{H}}^{*}_{spark}(X)$ using a transversality theorem for currents.
A \emph{de Rham-Federer spark} is a current $\mathbf{a}$ satisfying the \emph{spark equation}
$d\mathbf{a}=\mathbf{e}-\mathbf{r}$ for some smooth differential form $\mathbf{e}$
and rectifiable current $\mathbf{r}$. If we have two spark classes $\alpha$ and $\beta$
with representatives $\mathbf{a}$ and $\mathbf{b}$ satisfying spark equations
$d\mathbf{a}=\mathbf{e}-\mathbf{r}$ and $d\mathbf{b}=\mathbf{f}-\mathbf{s}$, we may define
the product $\alpha\cdot\beta$ as the spark class represented by
$\mathbf{a}\wedge \mathbf{f}+(-1)^{\deg \mathbf{a}+1}\mathbf{r}\wedge \mathbf{b}$ which satisfies the spark equation
$d(\mathbf{a}\wedge \mathbf{f}+(-1)^{\deg \mathbf{a}+1}\mathbf{r}\wedge \mathbf{b})=
\mathbf{e}\wedge \mathbf{f}-\mathbf{r}\wedge \mathbf{s}$. But we have to worry about the
well-definedness of the wedge product of two currents. It was shown in \cite{HLZ} by
geometric measure theory that there always exist good representatives such that
all wedge products in the formula make sense and the spark class of the product is
independent of the choices of representatives. Therefore, a ring structure on $\hat{\mathbf{H}}^{*}_{spark}(X)$
is established.

In \cite{H1}, we focused on the smooth hyperspark complex and gave a construction
of the multiplication in the group of smooth hyperspark classes $\hat{\mathbf{H}}^{*}_{smooth}(X)$.
Fix a good cover $\mathcal{U}$ of $X$, a \emph{smooth hyperspark} is an element in the \v{C}ech-de Rham
double complex $$\mathbf{a}\in\bigoplus_{p+q=k}C^p(\mathcal{U},\mathcal{E}^q)$$
with the spark equation $D\mathbf{a}=\mathbf{e}-\mathbf{r}$ where
$\mathbf{e}\in\mathcal{E}^{k+1}(X)\subset C^0(\mathcal{U},\mathcal{E}^{k+1})$
and $\mathbf{r}\in C^{k+1}(\mathcal{U},\mathbb{Z})$. We introduced a cup product $\cup$
on the \v{C}ech-de Rham double complex and defined the product of two sparks
$\mathbf{a}\cdot \mathbf{b}=\mathbf{a}\cup \mathbf{f}+(-1)^{\deg \mathbf{a}+1}\mathbf{r}\cup \mathbf{b}$
for sparks $\mathbf{a}$ and $\mathbf{b}$ satisfying spark equations
$D\mathbf{a}=\mathbf{e}-\mathbf{r}$ and $D\mathbf{b}=\mathbf{f}-\mathbf{s}$. This product induces
a product in the group of smooth hyperspark classes $\hat{\mathbf{H}}^{*}_{smooth}(X)$.
Moreover, these two ring structures were shown to be compatible with the group isomorphism
$\hat{\mathbf{H}}^{*}_{spark}(X)\cong\hat{\mathbf{H}}^{*}_{smooth}(X)$.

In a recent paper \cite{HL2}, Harvey and Lawson developed a theory on $\bar{\partial}$-analogue of
differential characters for complex manifolds and introduced the Harvey-Lawson spark characters
of level $p$. While Harvey and Lawson concentrated on the spark characters of level 1 in \cite{HL2},
we generalize their theory to level $p$ for any positive integer $p$ in the first half of this paper.
To study the spark characters of level $p$,
we introduce the \emph{Dolbeault-Federer spark complex} of level $p$, the \emph{\v{C}ech-Dolbeault
spark complex} of level $p$ and the \emph{\v{C}ech-Dolbeault hyperspark complex} of level $p$
associated to a complex manifold $X$. These spark complexes are truncated versions of
the \emph{de Rham-Federer spark complex}, the \emph{smooth hyperspark complex} and the \emph{hyperspark complex}
which were first introduced in \cite{HL1} and \cite{HL2}.
It is not surprising that the groups associated to these truncated spark complexes are isomorphic to
each other. We denote these groups by $\hat{\mathbf{H}}^*(X,p)$ collectively and call them
the Harvey-Lawson spark characters of level $p$.
Furthermore, there is a group epimorphism $\Pi_p: \hat{\mathbf{H}}^*(X)\rightarrow \hat{\mathbf{H}}^*(X,p)$,
whose kernel is an ideal. Hence we establish the ring structure by identifying
$\hat{\mathbf{H}}^*(X,p)$ as the quotient ring of spark characters $\hat{\mathbf{H}}^*(X)$.

One important application of the theory of spark characters is to study analytic Deligne cohomology.
We have the following fundamental $3\times3$ grid for $\hat{\mathbf{H}}^*(X,p)$.

\xymatrix{& & & 0 \ar[d] & 0 \ar[d] & 0 \ar[d] &   \\
& & 0 \ar[r] & \frac{H^k(X,p)}{H^k_{\mathbb{Z}}(X,p)} \ar[r] \ar[d] &
\hat{\mathbf{H}}^k_{\infty}(X,p) \ar[r] \ar[d] & d_p\mathcal{E}^k(X,p) \ar[r] \ar[d] & 0 \\
& & 0 \ar[r] & H^{k+1}_{\mathcal{D}}(X,\mathbb{Z}(p)) \ar[r] \ar[d] & \hat{\mathbf{H}}^k(X,p) \ar[r]^{\delta_1}
\ar[d]^{\delta_2} & \mathcal{Z}_{\mathbb{Z}}^{k+1}(X,p) \ar[r] \ar[d] & 0 \\
& & 0 \ar[r] & \ker{\Psi_*} \ar[r]\ar[d] &
H^{k+1}(X,\mathbb{Z}) \ar[r]^{\Psi_{p*}} \ar[d] & H^{k+1}_{\mathbb{Z}}(X,p) \ar[r] \ar[d] & 0 \\
& & & 0 & 0 & 0 &\\}
It is shown in the diagram above that the analytic Deligne cohomology group
$H^k_{\mathcal{D}}(X,\mathbb{Z}(p))$ is contained in $\hat{\mathbf{H}}^{k-1}(X,p)$ as a subgroup.
Therefore, we can represent a Deligne cohomology class by a spark of level $p$.
Lifting Deligne classes to spark classes in $\hat{\mathbf{H}}^{*}(X)$ and using the
product formula for spark characters introduced earlier,
we give an explicit product formula for analytic Deligne cohomology. We also show
this product is the same as the product invented by Beilinson in \cite{B}.
Moreover, we can expand our theory to study higher operations, i.e. Massey products
for Deligne cohomology, which was introduced by Deninger \cite{De} in 1995.
We shall study Massey products in spark characters, as well as
Massey products in Deligne cohomology in \cite{H2}.

By the spark presentation of Deligne classes, it is transparent to see that
every analytic subvariety of a complex manifold represents a Deligne class.
As a direct application of our product formula for Deligne classes, we show that
the intersection of two subvarieties represents the product of their Deligne classes if
they intersect properly. In particular, when the setting is algebraic, we have a direct way
to construct the cycle map $\psi: CH^*(X)\rightarrow H^{2*}_{\mathcal{D}}(X,\mathbb{Z}(*))$ via our theory.

Cheeger and Simons \cite{CS} constructed Chern classes in differential characters for complex
vector bundles with connections which refined the usual Chern classes.
For holomorphic vector bundles over a complex manifold, we give a Chern-Weil-type construction
for Chern classes in Deligne cohomology via Cheeger-Simons theory. To construct Chern classes in Deligne cohomology
for a holomorphic vector bundle, we choose any connection compatible with the holomorphic structure,
and project the $k$th Cheeger-Simons Chern class to $\hat{\mathbf{H}}^{2k-1}(X,k)$ whose image is actually
in $H^{2k}_{\mathcal{D}}(X,\mathbb{Z}(k))$. We show that the image of $k$th Cheeger-Simons Chern in $H^{2k}_{\mathcal{D}}(X,\mathbb{Z}(k))$ is independent of the choice of connection and define it as our
$k$th Chern class. The functorial property of Chern classes and the Whitney formula are shown as well.
In \cite{Z}, Zucker indicated that the splitting principle works well in defining Chern classes
in Deligne cohomology. In contrast to Zucker's method, our method is constructive since it is possible to
explicitly construct representatives of Cheeger-Simons Chern classes via methods of Harvey-Lawson \cite{HL3}
or Brylinski-McLaughlin \cite{BrM}.

In 1969, Bott \cite{Bo} constructed a family of connections on the normal bundle of any smooth
foliation of a manifold and established the Bott vanishing theorem which says
the characteristic classes of the normal bundle are trivial in all sufficiently high degrees.
We prove an analogue of the Bott vanishing theorem for our refined Chern classes of the normal
bundle of a holomorphic foliation.

In 1997, Nadel \cite{N} introduced relative invariants for holomorphic
vector bundles. Explicitly, for two holomorphic vector bundles $E$ and $F$ over a
complex manifold $X$ which are $C^{\infty}$ isomorphic, Nadel
defined invariants $\mathscr{E}^k(E,F)\in H^{2k-1}(X,\mathcal{O})/H^{2k-1}(X,\mathbb{Z})$.
He also conjectured that these invariants should coincide with a component
of the Abel-Jacobi image of $k!(ch_k(E)-ch_k(F))\in CH^k_{hom}(X)$ when the setting is algebraic.
This conjecture was proved by Berthomieu \cite{Be} by his relative K-theory.
Since we define Chern classes and Chern characters in Deligne cohomology for holomorphic vector bundles,
we can construct Nedel-type invariants $\hat{\mathscr{E}}^k(E,F)$ in intermediate Jacobians
in more general setting ( not necessarily algebraic ).
Moreover, we show that $\hat{\mathscr{E}}^k(E,F)$ is represented by a smooth $2k-1$-form whose
$(0, 2k-1)$ component represents $\mathscr{E}^k(E,F)\in H^{2k-1}(X,\mathcal{O})/H^{2k-1}(X,\mathbb{Z})$.
Therefore, Nadel's conjecture is proved in a more general context.

The organization of this paper is the following. First we recall the definition and basic properties
of (generalized) spark complex invented by Harvey and Lawson \cite{HL1}
\cite{HL2}. The main examples --- the Dolbeault-Federer spark complex, the \v{C}ech-Dolbeault
spark complex, the \v{C}ech-Dolbeault hyperspark complex --- are introduced in Section 3-5.
The ring structure and functoriality of spark characters associated to these spark complexes
are established and the equivalence of them are also verified. Using product formula of
spark characters introduced in \cite{HLZ} \cite{H1}, we give an explicit product formula
for analytic Deligne cohomology in Section 6. We apply spark theory to algebraic cycles and
define the ring homomorphism from Chow ring to Deligne cohomology in Section 7.
In Section 8, we define the refined Chern classes in Deligne cohomology for holomorphic vector
bundles over complex manifolds. And in Section 9, we prove an analogue of the Bott vanishing theorem
for holomorphic foliations in this context. In Section 10, we define Nadel-type invariants for
holomorphic vector bundles in the intermediate Jacobians and prove Nadel's conjecture.

\noindent{\textbf{Acknowledgements.}} I am very grateful to my advisor H. Blaine Lawson
for introducing this subject to me. I am also indebted to him for his encouragement and helpful discussions during the preparation of this paper. I would like to thank Zhiwei Yun and Li Li for useful discussions. 

\section{Generalized Spark Complexes}

We follow \cite{HL2} to give the definitions of a generalized spark complex and its associated
group of spark classes which are generalizations of the spark complex and its associated group of
spark classes appeared in \cite{HL1} \cite{H1}. When we mention spark complex in this
paper, we mean this generalized spark complex defined below.

\begin{defn}
A (generalized) \textbf{homological spark complex}, or \textbf{spark complex} for short,
is a triple of cochain complexes $(F^*,E^*,I^*)$ together with morphisms
$$I^* \stackrel{\Psi}\rightarrow F^* \hookleftarrow E^*$$
such that
\begin{enumerate}
            \item $I^k\cap E^k={0}$ for $k>0$, $F^k=E^k=I^k=0$ for $k<0$,
            \item $H^*(E^*)\cong H^*(F^*)$,
            \item $\Psi|_{I^0}: I^0 \rightarrow F^0$ is injective.
          \end{enumerate}
\end{defn}
\begin{defn}
In a given spark complex $(F^*,E^*,I^*)$, a \textbf{spark} of degree $k$ is
a pair $(a,r)\in F^k \oplus I^{k+1}$ which satisfies the \textbf{spark equations}
\begin{enumerate}
  \item $da=e-\Psi(r)$ for some $e\in E^{k+1}$,
  \item $dr=0$.
\end{enumerate}

Two sparks $(a,r),(a',r')$ of degree $k$ are \textbf{equivalent}
if there exists a pair $(b,s)\in F^{k-1}\oplus I^k$ such that
\begin{enumerate}
  \item $a-a'=db+\Psi(s)$,
  \item $r-r'=-ds$.
\end{enumerate}

The set of equivalence classes is called the group of spark classes
of degree $k$ and denoted by $\hat{\mathbf{H}}^k(F^*,E^*,I^*)$ or $\hat{\mathbf{H}}^k$
for short. Let $[(a,r)]$ denote the equivalence class containing the spark $(a,r)$.
\end{defn}

\begin{rmk}
Harvey and Lawson introduced spark complexes in \cite{HL1} where they
required $\Psi$ to be injective. In that case, $\mathbf{e}$ and $\mathbf{r}$
are uniquely determined by $\mathbf{a}$.
Later, they generalized the original definition and defined the generalized
spark complex in \cite{HL2}
where $\Psi: I^*\rightarrow F^*$ was not required to be injective.
Hence, $\mathbf{r}$ is not determined uniquely by $\mathbf{a}$
and we have to remember $\mathbf{r}$ for a spark
and denote a spark by $(\mathbf{a},\mathbf{r})$.

Also, in this paper, when we discuss spark complexes
in which $\Psi$ is injective, we may denote a spark only by $\mathbf{a}$ and omit $\mathbf{r}$.
\end{rmk}

We now derive the fundamental exact sequences associated to a spark complex
$(F^*,E^*,I^*)$. Let $Z^k(E^*)=\{e\in E^k: de=0\}$ and set
$$Z_I^k(E^*)\equiv \{e\in Z^k(E^*): [e]=\Psi_*([r]) \text{ for some } [r]\in H^k(I^*)\}$$
where $[e]$ denotes the class of $e$ in $H^k(E^*)\cong H^k(F^*)$.

\begin{lem}
There exist well-defined surjective homomorphisms
$$\delta_1: \hat{\mathbf{H}}^k\rightarrow Z_I^{k+1}(E^*) \quad \text{and} \quad
\delta_2: \hat{\mathbf{H}}^k \rightarrow H^{k+1}(I^*)$$ given by
$$\delta_1([(a,r)])=e \quad \text {and} \quad \delta_2([(a,r)])=[r]$$ where $da=e-\Psi(r)$.
\end{lem}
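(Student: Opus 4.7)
The plan is to verify the four separate assertions bundled into the lemma in turn: that the formulas $\delta_1([(a,r)]) = e$ and $\delta_2([(a,r)]) = [r]$ do not depend on the choice of spark in an equivalence class, that the image of $\delta_1$ really lands in $Z_I^{k+1}(E^*)$, that both maps are group homomorphisms, and that each is surjective. The axioms of a spark complex will be invoked as follows: axiom (1) ($\Psi(I^j)\cap E^j=0$ for $j>0$) guarantees that the pair $(e,\Psi(r))$ in the spark equation is essentially unique, while axiom (2) ($H^*(E^*)\cong H^*(F^*)$) does all the work for surjectivity.

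First I would check that for a given spark $(a,r)$ the component $e$ is actually determined by $a$: if $da=e-\Psi(r)=e'-\Psi(r')$, then $e-e'=\Psi(r-r')\in E^{k+1}\cap \Psi(I^{k+1})=\{0\}$ by axiom (1) (note $k+1\ge 1$). With this in hand, well-definedness of $\delta_1$ on equivalence classes is a short computation: if $a-a'=db+\Psi(s)$ and $r-r'=-ds$, applying $d$ gives
\begin{equation*}
e - e' - \Psi(r-r') = d(a-a') = d\Psi(s) = \Psi(ds) = -\Psi(r-r'),
\end{equation*}
so $e=e'$. Well-definedness of $\delta_2$ is immediate from $r-r'=-ds$. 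For $\delta_1([(a,r)])=e$ to lie in $Z_I^{k+1}(E^*)$, I would note that $de = d\Psi(r) = \Psi(dr) = 0$ and that the spark equation itself exhibits $e-\Psi(r)=da$ as a boundary, so $[e]=\Psi_*([r])$ in $H^{k+1}(F^*)\cong H^{k+1}(E^*)$, which is precisely the defining condition of $Z_I^{k+1}(E^*)$.

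The homomorphism property is then essentially trivial: if $(a,r)$, $(a',r')$ are sparks with corresponding $e, e'$, then $(a+a', r+r')$ is a spark with associated form $e+e'$, and similarly for scalar multiples. For surjectivity of $\delta_1$, given $e\in Z_I^{k+1}(E^*)$ I would pick $r\in I^{k+1}$ closed with $\Psi_*([r])=[e]$; then $e-\Psi(r)$ is a boundary in $F^{k+1}$, say $e-\Psi(r)=da$, and $(a,r)$ is a spark with $\delta_1([(a,r)])=e$. For surjectivity of $\delta_2$, given a class $[r]\in H^{k+1}(I^*)$, the element $\Psi(r)\in F^{k+1}$ is closed, so by axiom (2) there exists $e\in Z^{k+1}(E^*)$ with $[e]=[\Psi(r)]$ in $H^{k+1}(F^*)$; writing $e-\Psi(r)=da$ produces a spark $(a,r)$ with $\delta_2([(a,r)])=[r]$.

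No step strikes me as genuinely hard; the only subtle point is the uniqueness of $e$ given $a$, which one might overlook in the non-injective $\Psi$ setting and which relies crucially on indexing $(E^*\cap \Psi(I^*))$ in positive degrees. Everything else is a direct translation between spark equations and the isomorphism $H^*(E^*)\cong H^*(F^*)$.
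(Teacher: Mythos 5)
Your proof is correct and follows essentially the same route as the paper's: well-definedness by applying $d$ to the equivalence relation $a-a'=db+\Psi(s)$, $r-r'=-ds$, and surjectivity by choosing a closed $r$ (resp.\ a representative $e$ of $\Psi_*[r]$ under $H^{k+1}(E^*)\cong H^{k+1}(F^*)$) and solving $da=e-\Psi(r)$. You additionally spell out the homomorphism property and the fact that $\delta_1$ lands in $Z_I^{k+1}(E^*)$, which the paper leaves implicit.
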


\begin{proof}
If $(a',r')$ is equivalent to $(a,r)$, i.e. $a-a'=db+\Psi(s)$ and $r-r'=-ds$, then we have
$da'=e-\Psi(r+ds)$. So it is easy to see these maps are well-defined.

Consider $e\in Z_I^{k+1}(E^*)$, by definition, there exists $r\in I^{k+1}$ such that
$e-\Psi(r)$ is exact in $F^{k+1}$, i.e. $\exists a\in F^k$ with $da=e-\Psi(r)$. So $\delta_1([(a,r)])=e$.
For $[r]\in H^{k+1}(I^*)$, $\Psi(r)$ also represents a class in $H^{k+1}(F^*)\cong H^{k+1}(E^*)$.
Choosing a representative $e \in E^{k+1}$ of this class, we have $e-\Psi(r)=da$ for some
$a\in F^k$, hence $\delta_2([(a,r)])=[r]$. Both $\delta_1$ and $\delta_2$ are surjective.

\end{proof}

\begin{lem}
Define $\hat{\mathbf{H}}^k_E\equiv \ker\delta_2$, then $\hat{\mathbf{H}}^k_E\cong E^k/Z_I^{k}(E^*)$.
\end{lem}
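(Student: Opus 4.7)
The plan is to build a surjective homomorphism $\phi : E^k \to \hat{\mathbf{H}}^k_E$ defined by $\phi(e) = [(e, 0)]$, and then identify its kernel with $Z_I^k(E^*)$; the first isomorphism theorem will then deliver the claim.

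First I will check well-definedness. For any $e \in E^k$ the element $de$ lies in $E^{k+1}$, since $E^*$ is a subcomplex of $F^*$, so $(e,0)$ satisfies the spark equations $de = de - \Psi(0)$ and $d0 = 0$. Since $\delta_2([(e,0)]) = [0]$, the class $[(e,0)]$ really belongs to $\hat{\mathbf{H}}^k_E = \ker\delta_2$.

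Next I will prove surjectivity, which is the main technical step. Given $[(a,r)] \in \hat{\mathbf{H}}^k_E$, the hypothesis $[r] = 0 \in H^{k+1}(I^*)$ yields $s \in I^k$ with $ds = r$, so that the pair $(0,-s)$ witnesses the equivalence $(a,r) \sim (a + \Psi(s), 0)$. Writing $d(a + \Psi(s)) = e \in E^{k+1}$, I observe that $[e] = 0$ in $H^{k+1}(F^*)$ because $e$ is $d$-exact in $F^*$, and hence $[e] = 0$ in $H^{k+1}(E^*)$ by axiom (2). This lets me write $e = de'$ with $e' \in E^k$. The difference $a + \Psi(s) - e'$ is then a $d$-cocycle in $F^k$; invoking the isomorphism $H^k(E^*) \cong H^k(F^*)$ once more, its class is represented by some $e'' \in E^k$, so $a + \Psi(s) - e' - e'' = db$ for some $b \in F^{k-1}$. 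The pair $(b,0)$ implements the equivalence $(a + \Psi(s), 0) \sim (e' + e'', 0)$, exhibiting $e' + e'' \in E^k$ as a $\phi$-preimage.

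Finally I will compute $\ker\phi$. If $\phi(e) = 0$, there is $(b,s) \in F^{k-1} \oplus I^k$ with $e = db + \Psi(s)$ and $ds = 0$. Applying $d$ gives $de = \Psi(ds) = 0$, so $e \in Z^k(E^*)$, while $[e] = \Psi_*([s])$ in $H^k(F^*)$, which is exactly the condition defining $Z_I^k(E^*)$. Conversely, any $e \in Z_I^k(E^*)$ admits a cocycle $s \in Z^k(I^*)$ and $b \in F^{k-1}$ with $e - \Psi(s) = db$, and then $(b,s)$ gives $(e,0) \sim (0,0)$. The main obstacle is the surjectivity step, where axiom (2) must be invoked twice in succession — once to solve $de' = e$ inside $E^*$, and once to replace an $F^*$-cohomology class by a representative in $E^k$.
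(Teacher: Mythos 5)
Your proof is correct and follows essentially the same route as the paper: reduce a class in $\ker\delta_2$ to a representative $(a',0)$ with $a'\in E^k$, then identify exactly when such a representative is trivial, which yields $Z_I^k(E^*)$. The only difference is cosmetic — where the paper cites \cite[Lemma 1.5]{HL1} to move $a-\Psi(s)$ into $E^k$, you prove that step inline by invoking $H^*(E^*)\cong H^*(F^*)$ twice, and you package the conclusion via the first isomorphism theorem.
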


\begin{proof}
Let $\alpha\in\hat{\mathbf{H}}^k_E$ be represented by $(a,r)$ with spark equations $da=e-\Psi(r)$
and $dr=0$. Then we have  $[r]=\delta_2(\alpha)=0$, i.e. $r=-ds$ for some $s\in I^k$. So
$d(a-\Psi(s))=e$, by \cite[Lemma 1.5]{HL1} and the fact $H^*(F^*)\cong H^*(E^*)$, there exists
$b\in F^{k-1}$ such that $a'\equiv a-\Psi(s)+db\in E^k$. Hence $\alpha$ can be represented by
spark $(a',0)$ with $a'\in E^k$. If $(a',0)$ is equivalent to 0, then $a'=db'+\Psi(s')$ for some
$b'\in F^{k-1}$ and $s'\in I^k$ with $ds'=0$, i.e. $a'\in Z_I^{k}(E^*)$.
\end{proof}

\begin{rmk}
From last proof, it is easy to see that  $\hat{\mathbf{H}}^k_E$ is the space of spark classes
that can be represented by sparks of type $(a,0)$ where $a\in E^k$.
\end{rmk}

\begin{defn}
Associated to any spark complex $(F^*,E^*,I^*)$ is the cone complex $(G^*,D)$ defined by setting
$$G^k\equiv F^k\oplus I^{k+1} \text{ with differential } D(a,r)=(da+\Psi(r),-dr).$$
\end{defn}

Consider the homomorphism $\Psi_*:H^k(I^*)\rightarrow H^k(F^*)=H^k(E^*)$, and define
$$H^k_I(E^*)\equiv \text{Image}\{\Psi_*\} \quad \text{and} \quad Ker^k(I^*)\equiv \ker\{\Psi_*\}.$$

\begin{prop}\cite{HL2}
There are two fundamental short exact sequences
\begin{enumerate}
  \item $0\longrightarrow H^k(G^*)\longrightarrow \hat{\mathbf{H}}^k \stackrel{\delta_1}\longrightarrow
Z_I^{k+1}(E^*)\longrightarrow 0$;
  \item $0\longrightarrow \hat{\mathbf{H}}^k_E \longrightarrow \hat{\mathbf{H}}^k \stackrel{\delta_2}\longrightarrow
H^{k+1}(I^*)\longrightarrow 0$.
\end{enumerate}

Moreover, associated to any spark complex $(F^*,E^*,I^*)$ is the commutative diagram

\xymatrix{& & & 0 \ar[d] & 0 \ar[d] & 0 \ar[d] &   \\
& & 0 \ar[r] & \frac{H^k(E^*)}{H^k_I(E^*)} \ar[r] \ar[d] &
\hat{\mathbf{H}}^k_E \ar[r] \ar[d] & dE^k \ar[r] \ar[d] & 0 \\
& & 0 \ar[r] & H^k(G^*) \ar[r] \ar[d] & \hat{\mathbf{H}}^k \ar[r]^{\delta_1}
\ar[d]^{\delta_2} & Z_I^{k+1}(E^*) \ar[r] \ar[d] & 0 \\
& & 0 \ar[r] & Ker^{k+1}(I^*) \ar[r]\ar[d] &
H^{k+1}(I^*) \ar[r] \ar[d] & H_I^{k+1}(E^*) \ar[r] \ar[d] & 0 \\
& & & 0 & 0 & 0 &\\}
whose rows and columns are exact.
\end{prop}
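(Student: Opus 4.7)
The plan is to establish the two short exact sequences first, then derive each row and column of the $3\times 3$ grid, and finally verify that the squares commute. Surjectivity of $\delta_{1}$ and $\delta_{2}$ is already contained in the preceding lemma, so the real work is identifying kernels and splicing everything into the diagram.

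For sequence (1), I would observe that a spark $(a,r)$ lies in $\ker\delta_{1}$ precisely when $e=0$, i.e. when $da+\Psi(r)=0$ and $dr=0$; this is exactly the condition $D(a,r)=0$ in the cone complex $G^{*}$. The spark equivalence $(a,r)\sim(a',r')$ (meaning $a-a'=db+\Psi(s)$ and $r-r'=-ds$) translates verbatim into $(a,r)-(a',r')=D(b,s)$. Therefore $\ker\delta_{1}\cong H^{k}(G^{*})$. Sequence (2) is immediate: $\hat{\mathbf{H}}^{k}_{E}$ is defined as $\ker\delta_{2}$, and surjectivity of $\delta_{2}$ is already in hand. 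These give the middle row and middle column of the diagram.

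Next, I would construct the outer rows and columns. For the top row, I use the identification $\hat{\mathbf{H}}^{k}_{E}\cong E^{k}/Z_{I}^{k}(E^{*})$ from the earlier lemma and send $[a]\mapsto da\in dE^{k}$; the kernel is $Z^{k}(E^{*})/Z_{I}^{k}(E^{*})$, and since $B^{k}(E^{*})\subset Z_{I}^{k}(E^{*})$ (the zero class is $\Psi_{*}[0]$), this quotient is canonically $H^{k}(E^{*})/H^{k}_{I}(E^{*})$. The bottom row is tautological from the definitions of $Ker^{k+1}(I^{*})$ and $H^{k+1}_{I}(E^{*})$ as kernel and image of $\Psi_{*}$. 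The right column is the standard extraction of a cohomology class from a cocycle: $Z_{I}^{k+1}(E^{*})\twoheadrightarrow H^{k+1}_{I}(E^{*})$ with kernel $B^{k+1}(E^{*})=dE^{k}$. The left column is the one that requires a small argument: I would introduce the short exact sequence of complexes
\[
0\longrightarrow F^{*}\stackrel{a\mapsto(a,0)}{\longrightarrow} G^{*}\stackrel{(a,r)\mapsto r}{\longrightarrow} I^{*+1}\longrightarrow 0,
\]
where the quotient carries the differential $-d$. Its long exact sequence reads
\[
\cdots\to H^{k}(I^{*})\xrightarrow{\Psi_{*}} H^{k}(F^{*})\to H^{k}(G^{*})\to H^{k+1}(I^{*})\xrightarrow{\Psi_{*}} H^{k+1}(F^{*})\to\cdots,
\]
and using $H^{*}(F^{*})\cong H^{*}(E^{*})$ together with the definitions of $H^{k}_{I}(E^{*})$ and $Ker^{k+1}(I^{*})$, we obtain exactly $0\to H^{k}(E^{*})/H^{k}_{I}(E^{*})\to H^{k}(G^{*})\to Ker^{k+1}(I^{*})\to 0$.

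Finally, I would verify that each of the four squares commutes by unwinding the definitions of the maps on representatives. The maps in the top row and left column both act on a spark $(a,r)$ with trivial $[r]$ (respectively trivial $e$) in a manner compatible with the inclusions $E^{k}\hookrightarrow F^{k}$ and $F^{k}\hookrightarrow G^{k}$, so commutativity is a diagram chase that requires no new idea. The main obstacle, and the only place genuinely different from \cite{HL1}, is the left column: one must be careful that the connecting homomorphism in the cone long exact sequence is really $\Psi_{*}$ (not $-\Psi_{*}$ or a shifted variant) so that $\mathrm{coker}\,\Psi_{*}$ and $\ker\Psi_{*}$ plug correctly into the grid; this is why I would explicitly write out the above short exact sequence of complexes rather than quoting a generic mapping-cone statement.
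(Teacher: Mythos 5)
Your proposal is correct and complete. Note that the paper itself offers no proof of this proposition --- it is quoted from \cite{HL2} with only the surjectivity of $\delta_1,\delta_2$ (Lemma 2.4) and the identification $\hat{\mathbf{H}}^k_E\cong E^k/Z_I^k(E^*)$ (Lemma 2.5) proved beforehand --- so there is nothing in the text to compare against; your argument is the standard one and it uses exactly those two lemmas where they are needed. The key identifications all check out: a spark $(a,r)$ has $\delta_1=0$ iff $D(a,r)=0$ in the cone complex, and spark equivalence is literally $(a,r)-(a',r')=D(b,s)$, giving $\ker\delta_1\cong H^k(G^*)$ together with the injectivity of $H^k(G^*)\to\hat{\mathbf{H}}^k$; the inclusion $B^k(E^*)\subset Z_I^k(E^*)$ makes the top-left term come out as $H^k(E^*)/H^k_I(E^*)$; and the short exact sequence of complexes $0\to F^*\to G^*\to I^{*+1}\to 0$ (quotient differential $-d$) has connecting homomorphism $[r]\mapsto[\;da+\Psi(r)\;]=\Psi_*[r]$, which is precisely the sign/identification issue you rightly flag as the one delicate point; the four commutativity checks are indeed routine on representatives (e.g.\ the bottom-right square commutes because $e-\Psi(r)=da$ is exact in $F^{k+1}$, so $[e]=\Psi_*[r]$ under $H^{k+1}(E^*)\cong H^{k+1}(F^*)$).
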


Also, we can talk about quasi-isomorphism between two (generalized) spark complexes.

\begin{defn}
Two spark complexes $(F^*,E^*,I^*)$ and $(\bar{F}^*,\bar{E}^*,\bar{I}^*)$
are quasi-isomorphic if there exists a commutative diagram of morphisms

\xymatrix{& & & & & I^* \ar[r]^{\Psi} \ar[d]^{\psi} &
F^* \ar@{^{(}->}[d]^i & E^*  \ar @{_{(}->}[l]_i \ar @{=}[d] \\
& & & & & \bar{I}^* \ar[r]^{\bar{\Psi}}  & \bar{F}^*
 & \bar{E}^* \ar @{_{(}->}[l]_i\\}
inducing an isomorphism $$\psi^*: H^*(I^*) \stackrel{\cong}\longrightarrow H^*(\bar{I}^*).$$

\end{defn}

\begin{prop}\cite{HL2}
A quasi-isomorphism of spark complexes $(F^*,E^*,I^*)$ and $(\bar{F}^*,\bar{E}^*,\bar{I}^*)$
induces an isomorphism
$$\hat{\mathbf{H}}^*(F^*,E^*,I^*)\cong \hat{\mathbf{H}}^*(\bar{F}^*,\bar{E}^*,\bar{I}^*)$$
of the associated groups of spark classes. Moreover, it induces an isomorphism of the
$3\times 3$ grids associated to these two complexes.
\end{prop}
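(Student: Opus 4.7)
The plan is to construct the induced map on spark classes in the most natural way and then deduce it is an isomorphism by applying the five lemma to the second fundamental short exact sequence from the previous proposition. I would define
$$\Phi : \hat{\mathbf{H}}^k(F^*,E^*,I^*) \longrightarrow \hat{\mathbf{H}}^k(\bar{F}^*,\bar{E}^*,\bar{I}^*), \qquad \Phi([(a,r)]) = [(i(a),\psi(r))].$$
The commutativity $i\Psi = \bar{\Psi}\psi$ together with $i|_{E^*} = \mathrm{id}$ turns a spark equation $da = e - \Psi(r)$ into $d(i(a)) = e - \bar{\Psi}(\psi(r))$ and a spark equivalence $a-a' = db+\Psi(s),\ r-r' = -ds$ into the corresponding barred relations, so $\Phi$ is a well-defined group homomorphism.

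Next I would check that every auxiliary group appearing in the $3\times 3$ grid is canonically preserved. Since $E^* = \bar{E}^*$, the subgroup $dE^k \subset E^{k+1}$ and the cohomology $H^*(E^*)$ are literally the same on both sides. Because $\psi$ induces an isomorphism $H^*(I^*) \cong H^*(\bar{I}^*)$ and the squares commute, both the image and the kernel of $\Psi_*$ are carried isomorphically to those of $\bar{\Psi}_*$. Under the composite isomorphism $H^*(E^*) \cong H^*(F^*) \cong H^*(\bar F^*)$ (which forces $i_*$ itself to be an isomorphism on cohomology, since $E^* \hookrightarrow F^* \hookrightarrow \bar F^*$ agrees with $E^* \hookrightarrow \bar F^*$), this yields $H^k_I(E^*) = H^k_{\bar I}(\bar E^*)$, $Z_I^{k+1}(E^*) = Z_{\bar I}^{k+1}(\bar E^*)$, and $Ker^{k+1}(I^*) \cong Ker^{k+1}(\bar I^*)$. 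Lemma 2.5 then identifies $\hat{\mathbf{H}}^k_E = E^k/Z_I^k(E^*) = \bar E^k/Z_{\bar I}^k(\bar E^*) = \hat{\mathbf{H}}^k_{\bar E}$.

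With these identifications in hand, I would apply the five lemma to
$$0 \longrightarrow \hat{\mathbf{H}}^k_E \longrightarrow \hat{\mathbf{H}}^k \xrightarrow{\delta_2} H^{k+1}(I^*) \longrightarrow 0$$
and its barred analogue; naturality of $\delta_2$ and of the presentation $\hat{\mathbf{H}}^k_E \cong E^k/Z_I^k(E^*)$ makes the relevant squares commute, and since the outer vertical arrows are isomorphisms, so is $\Phi$. The full isomorphism of $3\times 3$ grids then follows, since every other entry has already been identified, each of $\delta_1,\delta_2$ and the various inclusions are natural under the pair $(i,\psi)$, and the one remaining term $H^k(G^*)$ is pinned down either by the first fundamental short exact sequence (combined with the iso on $\hat{\mathbf{H}}^k$ and on $Z_I^{k+1}(E^*)$ just established) or directly from the long exact sequence of the mapping cone and a second five-lemma argument.

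The main obstacle I anticipate is purely bookkeeping: keeping the two commutative squares $i\Psi = \bar\Psi\psi$ and $H^*(E^*) \cong H^*(F^*) \cong H^*(\bar F^*)$ simultaneously in view while tracing each entry of the grid, and verifying that the identifications of the auxiliary groups really are implemented by the \emph{same} maps induced by $(i,\psi)$ that participate in the rows and columns. No deeper input than the hypothesis that $\psi_*$ is an isomorphism on $I^*$-cohomology is required; the rest is diagram chasing organized by the five lemma.
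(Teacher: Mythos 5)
Your argument is correct, and it is essentially the standard proof: the paper itself gives no proof of this proposition (it is quoted from [HL2]), and the argument there proceeds exactly as you do, by defining $[(a,r)]\mapsto[(i(a),\psi(r))]$, identifying the outer terms of the second fundamental sequence via $E^*=\bar E^*$ and $\psi_*$, and concluding with the five lemma. The one point worth making explicit, which you do address parenthetically, is that $i_*:H^*(F^*)\to H^*(\bar F^*)$ is forced to be an isomorphism because it is sandwiched between the two isomorphisms $H^*(E^*)\cong H^*(F^*)$ and $H^*(\bar E^*)\cong H^*(\bar F^*)$; this is what makes $Z_I^{k}(E^*)=Z_{\bar I}^{k}(\bar E^*)$ and hence the left-hand vertical map an isomorphism.
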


\section{Dolbeault-Federer Sparks of Level p}

Let $X$ be a complex manifold. Recall the \textbf{de Rham-Federer spark complex} \cite{HLZ} associated to $X$
is a spark complex $(F^*,E^*,I^*)$ where
$$F^*\equiv \mathcal{D}'^*(X), E^*\equiv \mathcal{E}^*(X), I^*\equiv \mathcal{IF}^*(X).$$
Note that $\mathcal{E}^*$ and $\mathcal{D}'^*$ denote
the sheaves complex-valued smooth forms and currents respectively.
And $\mathcal{IF}^*$ is the sheaf of locally integrally flat currents on $X$.
The associated group of spark classes is denoted by $\hat{\mathbf{H}}^*_{spark}(X)$, or
$\hat{\mathbf{H}}^*(X)$ for short.
In fact, $\hat{\mathbf{H}}^*_{spark}(X)$ is a ring and functorial with respect to
smooth map between manifolds. We refer to \cite{HL1} \cite{HLZ} for details.

Now we introduce a new spark complex, the Dolbeault-Federer spark complex of level $p$,
which is closely related to the de Rham-Federer spark complex.

For a complex manifold $X$, we can decompose the space of smooth $k$-forms by types:
$$\mathcal{E}^k(X) \equiv \bigoplus_{r+s=k}\mathcal{E}^{r,s}(X).$$
And similarly,
$$\mathcal{D}'^k(X) \equiv \bigoplus_{r+s=k}\mathcal{D}'^{r,s}(X).$$

Fix an integer $p>0$ and consider the truncated complex $(\mathcal{D}'^*(X,p),d_p)$ with
$$\mathcal{D}'^k(X,p) \equiv \bigoplus_{r+s=k,r<p}\mathcal{D}'^{r,s}(X) \textrm{ and } d_p\equiv
\pi_p\circ d $$ where $\pi_p: \mathcal{D}'^k(X)\rightarrow\mathcal{D}'^k(X,p)$ is the natural
projection $\pi_p(a)=a^{0,k}+...+a^{p-1,k-p+1}$. Similarly, we can define $(\mathcal{E}^*(X,p),d_p)$.

\begin{defn}By the \textbf{Dolbeault-Federer spark complex of level} $\mathbf{p}$, or more simply,
the $\bar{\mathbf{d}}$\textbf{-spark complex of level} $\mathbf{p}$ we mean the triple $(F^*_p,E^*_p,I^*_p)$
$$F^*_p \equiv \mathcal{D}'^*(X,p),\quad E^*_p \equiv \mathcal{E}^*(X,p),\quad I^*_p \equiv \mathcal{IF}^*(X)$$
 with maps $$E^*_p \hookrightarrow F^*_p \quad\textrm{and}\quad \Psi_p:I^*_p \rightarrow F^*_p$$
 where $\Psi_p=\pi_p\circ i$.
\end{defn}

\begin{rmk}
The triple $(F^*_p,E^*_p,I^*_p)\equiv (\mathcal{D}'^*(X,p),\mathcal{E}^*(X,p),\mathcal{IF}^*(X))$
is a spark complex.
\end{rmk}
\begin{proof}
First, $$H^*(F^*_p)\cong H^*(E^*_p)\cong \mathbb{H}^*(\Omega^{*<p})\equiv H^*(X,p).$$
where $\mathbb{H}^*(\Omega^{*<p})\equiv H^*(X,p)$ denotes the hypercohomology of complex of sheaves
$$0\rightarrow\Omega^0\rightarrow\Omega^1\rightarrow\Omega^2\rightarrow\cdots
\rightarrow\Omega^{p-1}\rightarrow0$$
and $\Omega^k$ is the sheaf of holomorphic $k$-forms on $X$.

For the proof of $\Psi_p(I^k_p)\cap E^k_p=\{0\}$ for
$k>0$, we refer to \cite[Appendix B]{HL2}.
\end{proof}

\begin{defn}
A \textbf{Dolbeault-Federer spark of level} $\mathbf{p}$ of degree $k$,
or a $\bar{\mathbf{d}}$-\textbf{spark of level} $\mathbf{p}$ is a pair
$$(a,r)\in\mathcal{D}'^k(X,p)\oplus\mathcal{IF}^{k+1}(X)$$
satisfying the spark equations $$d_pa=e-\Psi_p(r)\quad \text{and}\quad dr=0$$
for some $e\in\mathcal{E}^{k+1}(X,p)$.

Two Dolbeault-Federer sparks of level $p$, $(a,r)$ and $(a',r')$
are \textbf{equivalent} if there exist
$b\in\mathcal{D}'^{k-1}(X,p)$ and $s\in \mathcal{IF}^k(X)$
such that $$a-a'=d_pb+\Psi_p(s) \quad\text{and}\quad r-r'=-ds.$$

The equivalence class determined by a spark $(a,r)$
will be denoted by $[(a,r)]$, and the group of
Dolbeault-Federer spark classes of level $p$ of degree $k$
will be denoted by $\hat{\mathbf{H}}^k_{spark}(X,p)$ or $\hat{\mathbf{H}}^k(X,p)$ for short.

\end{defn}

Applying Proposition 2.8, we have

\begin{prop}
Let $H_{\mathbb{Z}}^{k+1}(X,p)$ denote the image of map $\Psi_{p*}: H^{k+1}(X,\mathbb{Z})\rightarrow
H^{k+1}(X,p)$, and
$\mathcal{Z}_{\mathbb{Z}}^{k+1}(X,p)$ denote the set of $d_p$-closed forms in
$\mathcal{E}^{k+1}(X,p)$ which represent classes in $H_{\mathbb{Z}}^{k+1}(X,p)$.
Let $\hat{\mathbf{H}}^k_{\infty}(X,p)$ denote the spark classes representable by smooth forms,
and $H^{k+1}_{\mathcal{D}}(X,\mathbb{Z}(p))$ denote the Deligne cohomology group.

The $3\times 3$ diagram for $\hat{\mathbf{H}}^k(X,p)$ can be written as

\xymatrix{& & & 0 \ar[d] & 0 \ar[d] & 0 \ar[d] &   \\
& & 0 \ar[r] & \frac{H^k(X,p)}{H^k_{\mathbb{Z}}(X,p)} \ar[r] \ar[d] &
\hat{\mathbf{H}}^k_{\infty}(X,p) \ar[r] \ar[d] & d_p\mathcal{E}^k(X,p) \ar[r] \ar[d] & 0 \\
& & 0 \ar[r] & H^{k+1}_{\mathcal{D}}(X,\mathbb{Z}(p)) \ar[r] \ar[d] & \hat{\mathbf{H}}^k(X,p) \ar[r]^{\delta_1}
\ar[d]^{\delta_2} & \mathcal{Z}_{\mathbb{Z}}^{k+1}(X,p) \ar[r] \ar[d] & 0 \\
& & 0 \ar[r] & \ker{\Psi_*} \ar[r]\ar[d] &
H^{k+1}(X,\mathbb{Z}) \ar[r]^{\Psi_{p*}} \ar[d] & H^{k+1}_{\mathbb{Z}}(X,p) \ar[r] \ar[d] & 0 \\
& & & 0 & 0 & 0 &\\}

A special and the most interesting case is when $X$ is K\"ahler and $k=2p-1$,

\xymatrix{& & & 0 \ar[d] & 0 \ar[d] & 0 \ar[d] &   \\
& & 0 \ar[r] & \mathcal{J}^p(X) \ar[r] \ar[d] &
\hat{\mathbf{H}}^{2p-1}_{\infty}(X,p) \ar[r] \ar[d] & d_p\mathcal{E}^{2p-1}(X,p) \ar[r] \ar[d] & 0 \\
& & 0 \ar[r] & H^{2p}_{\mathcal{D}}(X,\mathbb{Z}(p)) \ar[r] \ar[d] & \hat{\mathbf{H}}^{2p-1}(X,p) \ar[r]^{\delta_1}
\ar[d]^{\delta_2} & \mathcal{Z}_{\mathbb{Z}}^{2p}(X,p) \ar[r] \ar[d] & 0 \\
& & 0 \ar[r] & \text{Hdg}^{p,p}(X) \ar[r]\ar[d] &
H^{2p}(X,\mathbb{Z}) \ar[r]^{\Psi_{p*}} \ar[d] & H^{2p}_{\mathbb{Z}}(X,p) \ar[r] \ar[d] & 0 \\
& & & 0 & 0 & 0 &\\}
where $\mathcal{J}^p(X)$ denotes the $p$th intermediate Jacobian and
$\text{Hdg}^{p,p}(X)$ is the set of the Hodge classes.

\end{prop}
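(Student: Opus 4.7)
The plan is to apply Proposition 2.8 to the Dolbeault--Federer spark complex of level $p$ and then identify each entry of the resulting $3\times 3$ grid with the object named in the statement. Most identifications are bookkeeping: the remark preceding the proposition already gives $H^*(E^*_p) \cong H^*(F^*_p) \cong H^*(X,p)$, and Federer's theorem (via the usual fine resolution of the constant sheaf by integrally flat currents) gives $H^*(I^*_p) \cong H^*(X,\mathbb{Z})$. Under these identifications, $\Psi_{p*}$ becomes the map on cohomology induced by $\mathbb{Z} \hookrightarrow \mathcal{O} = \Omega^0 \hookrightarrow \Omega^{*<p}$, so by definition $H^{k+1}_I(E^*_p) = H^{k+1}_{\mathbb{Z}}(X,p)$, $\mathrm{Ker}^{k+1}(I^*_p) = \ker\Psi_{p*}$, and $Z^{k+1}_I(E^*_p) = \mathcal{Z}^{k+1}_{\mathbb{Z}}(X,p)$. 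The top right entry $dE^k$ becomes $d_p\mathcal{E}^k(X,p)$, and by the remark following Lemma 2.6 the middle right term $\hat{\mathbf{H}}^k_E$ is precisely $\hat{\mathbf{H}}^k_{\infty}(X,p)$, the classes representable by smooth forms.

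The one substantive step is identifying $H^k(G^*_p)$, the cohomology of the cone of $\Psi_p : \mathcal{IF}^*(X) \to \mathcal{D}'^*(X,p)$, with $H^{k+1}_{\mathcal{D}}(X,\mathbb{Z}(p))$. For this I would use the short exact sequence of complexes of sheaves
\[
0 \to \Omega^{*<p}[-1] \to \mathbb{Z}(p)_{\mathcal{D}} \to \mathbb{Z}(p) \to 0
\]
that presents the Deligne complex, which yields a long exact sequence
\[
\cdots \to H^k(X,p) \to H^{k+1}_{\mathcal{D}}(X,\mathbb{Z}(p)) \to H^{k+1}(X,\mathbb{Z}) \xrightarrow{\Psi_{p*}} H^{k+1}(X,p) \to \cdots .
\]
On the other hand, the standard mapping-cone long exact sequence for $G^*_p$ reads
\[
\cdots \to H^k(F^*_p) \to H^k(G^*_p) \to H^{k+1}(I^*_p) \xrightarrow{\Psi_{p*}} H^{k+1}(F^*_p) \to \cdots ,
\]
and under the identifications above the two sequences agree term by term and map by map; the five lemma then gives the required isomorphism. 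Once this is in place, Proposition 2.8 delivers the first diagram verbatim.

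For the special K\"ahler case with $k=2p-1$, I would invoke Hodge theory on $X$. The Hodge filtration gives $H^{2p-1}(X,p) \cong H^{2p-1}(X,\mathbb{C})/F^pH^{2p-1}(X,\mathbb{C})$, and quotienting further by the image of $H^{2p-1}(X,\mathbb{Z})$ produces Griffiths' intermediate Jacobian $\mathcal{J}^p(X)$, identifying the top left entry. For the bottom left entry, $\ker\Psi_{p*}$ consists of those integral classes in $H^{2p}(X,\mathbb{Z})$ whose image in $H^{2p}(X,\mathbb{C})/F^p$ vanishes, i.e.\ classes lying in $F^pH^{2p} \cap \overline{F^pH^{2p}} = H^{p,p}(X)$, which is by definition $\mathrm{Hdg}^{p,p}(X)$.

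I expect the sole genuine obstacle to be the cone identification $H^k(G^*_p) \cong H^{k+1}_{\mathcal{D}}(X,\mathbb{Z}(p))$; the rest is a matter of matching definitions and invoking the Hodge decomposition in the K\"ahler setting.
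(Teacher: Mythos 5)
Your overall strategy is the same as the paper's: apply Proposition 2.8 to the Dolbeault--Federer spark complex of level $p$, match up the entries by bookkeeping, and isolate the identification $H^k(G^*_p)\cong H^{k+1}_{\mathcal{D}}(X,\mathbb{Z}(p))$ as the one substantive point. Your treatment of the K\"ahler case ($\mathcal{J}^p(X)$ from the Hodge filtration, $\ker\Psi_{p*}=\mathrm{Hdg}^{p,p}(X)$ from $F^p\cap\overline{F^p}=H^{p,p}$ in degree $2p$) is correct and is more than the paper bothers to write down.

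The gap is in your argument for the cone identification. You produce two long exact sequences --- one from the extension $0\to\Omega^{*<p}[-1]\to\mathbb{Z}_{\mathcal{D}}(p)\to\mathbb{Z}\to 0$ and one from the mapping cone of $\Psi_p:\mathcal{IF}^*(X)\to\mathcal{D}'^*(X,p)$ --- and then say that since the outer terms "agree term by term and map by map," the five lemma gives the isomorphism. But the five lemma needs a commutative ladder, i.e.\ an actual morphism from one sequence to the other, including a map $H^{k+1}_{\mathcal{D}}(X,\mathbb{Z}(p))\to H^k(G^*_p)$ in the middle; two abstractly exact sequences with isomorphic flanking terms need not have isomorphic middle terms. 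Moreover, even the claim that the outer maps agree (e.g.\ that the connecting map $H^{k+1}(X,\mathbb{Z})\to H^{k+1}(X,p)$ coming from the cone of currents coincides, under the identifications $H^*(\mathcal{IF}^*(X))\cong H^*(X,\mathbb{Z})$ and $H^*(\mathcal{D}'^*(X,p))\cong H^*(X,p)$, with the one induced by $\mathbb{Z}\hookrightarrow\Omega^0$) already presupposes a compatible choice of resolutions. The paper (Lemma 6.4) closes exactly this hole: it takes the acyclic resolutions $\mathbb{Z}\to\mathcal{IF}^*$ and $\Omega^j\to\mathcal{D}'^{j,*}$, observes that they induce a quasi-isomorphism of complexes of sheaves
\[
\mathrm{Cone}\bigl(\mathbb{Z}\to\Omega^{*<p}\bigr)\;\simeq\;\mathrm{Cone}\bigl(\Psi:\mathcal{IF}^*\to\textstyle\bigoplus_{s+t=*,\,s<p}\mathcal{D}'^{s,t}\bigr),
\]
and passes to (hyper)cohomology of global sections. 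That single construction both furnishes the comparison map you are missing and proves it is an isomorphism, making the five lemma unnecessary. So your plan is repairable, but as written the key step is not yet a proof.
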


\begin{proof}
The proof follows Proposition 2.8 directly. The only nontrivial part is why Deligne cohomology appears
in the middle row. We postpone our proof to Section 6 where we study Deligne cohomology in detail.
\end{proof}

\begin{rmk}
The $\bar{d}$-spark complex is a generalization
of $\bar{\partial}$-spark complex in \cite{HL2} which corresponds the special case $p=1$.
\end{rmk}

\subsection{Ring Structure}
We can establish the ring structure on $ \hat{\mathbf{H}}^*(X,p)$ by
identifying it as a quotient ring of $ \hat{\mathbf{H}}^*(X)$.

Consider the following commutative diagram:

 \xymatrix{& & & \mathcal{IF}^*(X)
\ar[r]^i \ar[d]^{id} & \mathcal{D}'^*(X)
 \ar[d]^{\pi_p} & \mathcal{E}^*(X) \ar[l]_i \ar[d]^{\pi_p} \\
& & & \mathcal{IF}^*(X) \ar[r]^{\pi_p \circ i} & \mathcal{D}'^*(X,p) &
\mathcal{E}^*(X,p) \ar[l]_i }
which induces a group homomorphism $\Pi_p: \hat{\mathbf{H}}^*(X)\rightarrow\hat{\mathbf{H}}^*(X,p)$.
Furthermore, we have

\begin{thm}
The morphism of spark complexes $(\pi_p, \pi_p, id): (F^*, E^*, I^*)\rightarrow(F^*_p, E^*_p, I^*_p)$
induces a surjective group homomorphism $$\Pi_p:\hat{\mathbf{H}}^*(X) \rightarrow
\hat{\mathbf{H}}^*(X,p)$$ whose kernel is an ideal. Hence, $\hat{\mathbf{H}}^*(X,p)$ carries a ring structure.
\end{thm}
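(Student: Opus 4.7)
The plan is to verify $(\pi_p,\pi_p,id)$ is a morphism of spark complexes (so that $\Pi_p$ is well-defined by the formula $[(a,r)]\mapsto[(\pi_p a,r)]$), establish surjectivity via the second fundamental exact sequence of Proposition 2.8, and then identify $\ker\Pi_p$ concretely enough that the product formula of \cite{HLZ} shows it is an ideal; the ring structure on $\hat{\mathbf{H}}^*(X,p)$ is then defined as the quotient $\hat{\mathbf{H}}^*(X)/\ker\Pi_p$.

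The morphism axioms are immediate from the definitions: $\pi_p\circ\Psi=\pi_p\circ i=\Psi_p$, the inclusion $E^*\hookrightarrow F^*$ intertwines with $\pi_p$ on both sides, and $\pi_p\circ d=d_p\circ\pi_p$ because both $\partial$ and $\bar\partial$ preserve the subspace $\bigoplus_{r'\geq p}\mathcal{D}'^{r',s}(X)$, so $\pi_p(d(a-\pi_p a))=0$. For surjectivity, note that $I^*=I^*_p=\mathcal{IF}^*(X)$, so $H^{k+1}(I^*)=H^{k+1}(X,\mathbb{Z})$ for both complexes and $\delta_2\circ\Pi_p=\delta_2$. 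Given $[\beta]\in\hat{\mathbf{H}}^k(X,p)$, I would lift $\delta_2[\beta]$ along the surjection $\delta_2:\hat{\mathbf{H}}^k(X)\twoheadrightarrow H^{k+1}(X,\mathbb{Z})$ to some $[\alpha]$. Then $[\beta]-\Pi_p[\alpha]\in\hat{\mathbf{H}}^k_E(X,p)$ is representable by $(a,0)$ with $a\in\mathcal{E}^k(X,p)\subset\mathcal{E}^k(X)$; viewed in the full complex this $(a,0)$ is still a spark (since $da$ is smooth), and $\pi_p a=a$ gives $\Pi_p[(a,0)]=[(a,0)]$, so $[\beta]=\Pi_p([\alpha]+[(a,0)])$.

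For the kernel I would first prove a clean representation: every class in $\ker\Pi_p$ is equivalent, in $\hat{\mathbf{H}}^k(X)$, to some $(c,0)$ with $c\in\mathcal{D}'^k_{\geq p}(X):=\bigoplus_{r'\geq p}\mathcal{D}'^{r',s}(X)$ and $dc\in\mathcal{E}^{k+1}(X)$. Indeed, unpacking $(\pi_p a,r)\sim 0$ in the level-$p$ complex yields witnesses $b\in\mathcal{D}'^{k-1}(X,p)$ and $s\in\mathcal{IF}^k(X)$ with $\pi_p a=d_p b+\Psi_p(s)$, $r=-ds$; then $c:=a-db-s$ has $\pi_p c=0$ and $(a,r)\sim(c,0)$ via the equivalence data $(b,s)$. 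Now apply the \cite{HLZ} product formula to such a $[(c,0)]$ (with $dc=E$) against any $[(a',r')]$ (with $da'=e'-r'$): the product is represented by $(c\wedge e',0)$, since the $r\wedge a'$ and $r\wedge r'$ terms vanish. Additivity of holomorphic bi-degree under wedge keeps $c\wedge e'\in\mathcal{D}'^*_{\geq p}(X)$, and closedness of $e'$ gives $d(c\wedge e')=dc\wedge e'$, which is smooth. Hence $[(c,0)]\cdot[\beta]\in\ker\Pi_p$; the symmetric calculation handles $[\beta]\cdot[(c,0)]$. The main obstacle is the standard technical point that wedge products of currents must be well-defined on the nose, which is exactly what the transversality and good-representative machinery of \cite{HLZ} delivers; granting that, the ideal property reduces to the bi-degree bookkeeping above.
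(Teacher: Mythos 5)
Your proposal is correct in substance and, for the kernel, follows the same strategy as the paper; but two points deserve comment. For surjectivity you argue through the $3\times 3$ grid: lift $\delta_2[\beta]$ along $\delta_2:\hat{\mathbf{H}}^k(X)\twoheadrightarrow H^{k+1}(X,\mathbb{Z})$, observe the difference lies in $\hat{\mathbf{H}}^k_E(X,p)$, and use the fact (Lemma 2.5) that such classes are represented by $(a,0)$ with $a\in\mathcal{E}^k(X,p)$, which is already a de Rham--Federer spark fixed by $\pi_p$. The paper instead lifts an arbitrary level-$p$ spark $(A,r)$ directly: it picks $(a_0,r)$ with $da_0=e_0-r$, notes $d_p(A-\pi_pa_0)$ is smooth, and applies \cite[Lemma 1.5]{HL1} to write $A-\pi_pa_0=f+d_pb$. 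Both routes ultimately rest on the same Lemma 1.5 (yours via the proof of Lemma 2.5), and yours is a clean repackaging; the paper's is more hands-on but produces the lift explicitly.

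For the kernel, your reduction of a class in $\ker\Pi_p$ to a representative $(c,0)$ with $\pi_p(c)=0$ and $dc$ smooth is exactly the first half of the paper's Lemma 3.7. The paper, however, goes one step further and smooths $c$, using $H^*(F^p\mathcal{D}'^*(X))\cong H^*(F^p\mathcal{E}^*(X))$ together with \cite[Lemma 1.5]{HL1}, and this extra step is not idle: in the product $[\beta]\cdot[(c,0)]$ taken in that order, the \cite{HLZ} formula produces the term $r'\wedge c$, which for a general current $c$ is not defined by any transversality argument (rectifiable $\wedge$ smooth is fine, rectifiable $\wedge$ arbitrary current is not). So your phrase ``the symmetric calculation handles $[\beta]\cdot[(c,0)]$'' hides a gap if $c$ is left as a current. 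You can repair it either by performing the smoothing step as in Lemma 3.7 (after which both orders involve only current-$\wedge$-smooth and rectifiable-$\wedge$-smooth products), or by invoking the graded-commutativity of $\hat{\mathbf{H}}^*(X)$ established in \cite{HLZ} to deduce $[\beta]\cdot[(c,0)]=\pm[(c,0)]\cdot[\beta]\in\ker\Pi_p$ from the one-sided computation you did carry out. With either patch the argument is complete; note also that the smoothed version of Lemma 3.7 is reused later in the paper (Theorems 3.9 and 3.11), so proving the stronger statement is worth the effort.
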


\begin{proof}
It's straightforward to see that the diagram above commutes and $\pi_p$ commutes with differentials.
Consequently, the induced map $(a,r)\mapsto(\pi_p(a),r)$ on sparks descends to a well defined
homomorphism $\Pi_p:\hat{\mathbf{H}}^k(X)\rightarrow\hat{\mathbf{H}}^k(X,p)$ as claimed.

To prove the surjectivity, consider a spark $(A,r)\in F^k_p\oplus I^{k+1}$ with $dr=0$ and $d_pA=e-\Psi_p(r)$
for some $e\in E^{k+1}_p$. We can choose some smooth form which represents same cohomology class with $r$ in
$H^{k+1}(F^*)\cong H^{k+1}(E^*)$, so there exist $a_0\in F^k$, $e_0\in E^{k+1}$ such that $da_0=e_0-r$.
We have $\pi_p (da_0)=\pi_p(e_0-r)\Rightarrow d_p(\pi_p a_0)=\pi_p e_0-\Psi_p r$. Hence,
$d_p(A-\pi_p a_0)=e-\pi_pe_0$ is a smooth form.  It follows by \cite[Lemma 1.5]{HL1} that there exist
$b\in F^{k-1}_p$ and $f\in E^k_p$ with $A-\pi_pa_0=f+d_pb$. Set $a=a_0+f+db$ and note that
$da=da_0+df+ddb=e_0-r+df=(e_0+df)-r$. Hence, $(a,r)$ is a spark of degree $k$ and
$\pi_pa=\pi_p(a_0+f+db)=\pi_pa_0+f+d_pb=A$. So $\Pi_p$ is surjective.

We need the following lemma to show the kernel is an ideal.

\begin{lem}
On $\hat{\mathbf{H}}^k(X)$, one has that $\ker(\Pi_p)=\{\alpha \in
\hat{\mathbf{H}}^k(X) : \exists (a,0) \in \alpha$ where a is
smooth and $\pi_p(a)=0\}$. In particular, $\ker(\Pi_p)\subset
\hat{\mathbf{H}}^k_{\infty}(X)$.
\end{lem}
\begin{proof}
One direction is clear. Suppose $\alpha \in \ker(\Pi_p)$ and choose
any spark $(a,r) \in \alpha$. $\Pi_p(\alpha)=0$ means that there exist $
b \in F^{k-1}_p$ and $s\in I^k$ with
$$\left\{%
\begin{array}{ll}
    \pi_p(a)=d_pb+\Psi_p (s)=\pi_p(db+s) \\
    r=-ds \\
\end{array}%
\right.$$
Replace $(a,r)$ by $(\tilde{a},0)=(a-db-s,r+ds)$, note
that $\pi_p(\tilde{a})=\pi_p(a-db-s)=0$.

In fact, we can choose $\tilde{a}$ to be smooth. $d\tilde{a}=da-ds=e-r-ds=e$
is a smooth form, it follows by \cite[Lemma 1.5]{HL1} and the fact
$H^*(F^p\mathcal{D}'^*(X))=H^*(F^p\mathcal{E}^*(X))$ that we can choose
$\tilde{a}$ to be smooth. Note that $F^0\mathcal{D}'^*\supset F^1\mathcal{D}'^*\supset
\cdots \supset F^p\mathcal{D}'^*\supset \cdots$ is the naive filtration.
\end{proof}
By the product formula of $\hat{\mathbf{H}}^*(X)$ in \cite{HLZ},
it is easy to see the kernel is an ideal.
In fact, if $\alpha$ and $\beta$ are two spark classes, and $\alpha\in\ker(\Pi_p)$,
then we can choose representatives $(a,0)$ and $(b,s)$ for $\alpha$ and $\beta$
respectively, with spark equations $da=e-0$ and $db=f-s$, where $a$, $e$, $f$ are smooth.
By the product formula, $\alpha\beta$ can be represented by
$(a\wedge f+(-1)^{\deg a+1}0\wedge b,0\wedge s)=(a\wedge f,0)$ which is in $\ker(\Pi_p)$.

Hence, $\hat{\mathbf{H}}^*(X,p)$ carries a ring structure induced from
$\hat{\mathbf{H}}^*(X)$.

\end{proof}

\subsection{Functoriality}
\begin{prop}
There are commutative diagrams

 \xymatrix{
 & & \hat{\mathbf{H}}^k(X) \ar[r]^{\delta_1} \ar[d]^{\Pi_p}
 & \mathcal{Z}^{k+1}_{\mathbb{Z}}(X) \ar[d]^{\pi_p} &
 & \hat{\mathbf{H}}^k(X) \ar[r]^{\delta_2} \ar[d]^{\Pi_p} & H^{k+1}(X,\mathbb{Z}) \ar[d]^=\\
 & & \hat{\mathbf{H}}^k(X,p) \ar[r]^{\delta_1} &  \mathcal{Z}^{k+1}_{\mathbb{Z}}(X,p) &
 & \hat{\mathbf{H}}^k(X,p) \ar[r]^{\delta_2} & H^{k+1}(X,\mathbb{Z})}
\end{prop}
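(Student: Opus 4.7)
The plan is a direct chase through the definitions of $\delta_1$, $\delta_2$, and $\Pi_p$, all unpacked earlier in the section. Start with a spark class $\alpha=[a]\in\hat{\mathbf{H}}^k(X)$ represented by a current $a\in\mathcal{D}'^k(X)$ satisfying the spark equation $da=e-r$ with $e\in\mathcal{E}^{k+1}(X)$ and $r\in\mathcal{IF}^{k+1}(X)$. Under the construction of Theorem~3.5, $\Pi_p(\alpha)=[(\pi_p a,\,r)]$ in $\hat{\mathbf{H}}^k(X,p)$, so all that remains is to identify the outputs of $\delta_1$ and $\delta_2$ on this representative.

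For the first square the key point is the identity $d_p\circ\pi_p=\pi_p\circ d$ on $\mathcal{D}'^*(X)$. This is a one-line bidegree count: the current $a-\pi_p a$ is supported in bidegrees $(r,s)$ with $r\geq p$, and since $d=\partial+\bar\partial$ sends $(r,s)$-components to bidegrees $(r+1,s)$ and $(r,s+1)$, the form $d(a-\pi_p a)$ is still supported in bidegrees with $r\geq p$, whence $\pi_p(d(a-\pi_p a))=0$. Equivalently, $d_p(\pi_p a)=\pi_p(d\,\pi_p a)=\pi_p(da)=\pi_p e-\pi_p r=\pi_p e-\Psi_p(r)$. Therefore the level-$p$ spark equation for $(\pi_p a, r)$ has smooth piece $\pi_p e\in\mathcal{E}^{k+1}(X,p)$, and by the definition of $\delta_1$ at level $p$ this gives $\delta_1(\Pi_p(\alpha))=\pi_p e=\pi_p(\delta_1(\alpha))$, which is the commutativity of the first diagram.

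For the second square, observe that $\Pi_p$ acts on sparks by $(a,r)\mapsto(\pi_p a,\,r)$, leaving the integrally flat component untouched. The two spark complexes $(F^*,E^*,I^*)$ and $(F^*_p,E^*_p,I^*_p)$ share the same $I^*=\mathcal{IF}^*(X)$, so the identification $H^{k+1}(I^*)=H^{k+1}(I^*_p)\cong H^{k+1}(X,\mathbb{Z})$ is unambiguous. Both $\delta_2$'s send $[(a,r)]$ to $[r]\in H^{k+1}(X,\mathbb{Z})$, giving $\delta_2(\Pi_p(\alpha))=[r]=\delta_2(\alpha)$, and commutativity with the identity arrow is immediate.

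The entire argument is bookkeeping at the level of representatives; the only substantive observation is the compatibility $d_p\circ\pi_p=\pi_p\circ d$, and I do not anticipate any real obstacle since this is a formal consequence of how $\partial$ and $\bar\partial$ interact with the naive holomorphic filtration.
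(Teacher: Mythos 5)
Your proposal is correct and follows essentially the same route as the paper: choose a representative $(a,r)$ with $da=e-r$, use the compatibility $d_p\circ\pi_p=\pi_p\circ d$ to read off the level-$p$ spark equation $d_p(\pi_p a)=\pi_p e-\Psi_p(r)$, and conclude $\delta_1\circ\Pi_p=\pi_p\circ\delta_1$, with the $\delta_2$ square being immediate since both complexes share the same $I^*$. The only difference is that you spell out the bidegree argument for $\pi_p\circ d=d_p\circ\pi_p$, which the paper takes as already established.
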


\begin{proof}
Let $\alpha\in\hat{\mathbf{H}}^k(X)$. Choose a representative $(a,r)\in \alpha$ with
spark equation $da=e-r$. Then $\pi_p\circ\delta_1(\alpha)=\pi_p(e)$, and
$\delta_1\circ\Pi_p(\alpha)=\delta_1\circ\Pi_p([(a,r)])=\delta_1([(\pi_p(a),r)])=\pi_p(e)$
since $(\pi_p(a),r)$ is a $\bar{d}$-spark of level $p$ with spark equation
$d_p(\pi_pa)=\pi_p(e)-\Psi_pr$. Hence, the first diagram is commutative.
We can verify the second one by the same way.

\end{proof}

Moreover, we have the following theorem
\begin{thm}
Any holomorphic map $f:X \rightarrow Y$ between complex manifolds induces
a graded ring homomorphism
$$f^*:\hat{\mathbf{H}}^*(Y,p) \rightarrow \hat{\mathbf{H}}^*(X,p)$$
with the property that if $g: Y \rightarrow Z$ is holomorphic,
then $(g\circ f)^*=f^*\circ g^*$.

\end{thm}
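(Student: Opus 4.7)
My plan is to reduce the statement to functoriality already established for the ring $\hat{\mathbf{H}}^*(X)$. By Theorem 3.5, $\hat{\mathbf{H}}^*(X,p)$ is canonically the quotient ring $\hat{\mathbf{H}}^*(X)/\ker(\Pi_p^X)$, and the de Rham-Federer spark ring is contravariantly functorial with respect to smooth maps (see \cite{HLZ}, \cite{H1}). Since every holomorphic map is smooth, any holomorphic $f : X \to Y$ already yields a graded ring homomorphism $f^* : \hat{\mathbf{H}}^*(Y) \to \hat{\mathbf{H}}^*(X)$ satisfying $(g\circ f)^* = f^*\circ g^*$. Thus my main task will be to show that this map descends to the level-$p$ quotients, i.e. that $f^*\bigl(\ker(\Pi_p^Y)\bigr) \subset \ker(\Pi_p^X)$.

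The critical input is the characterization supplied by Lemma 3.7: a class $\alpha \in \hat{\mathbf{H}}^k(Y)$ lies in $\ker(\Pi_p^Y)$ precisely when it admits a representative of the form $(a,0)$ with $a \in \mathcal{E}^k(Y)$ smooth and $\pi_p(a)=0$, equivalently with $a$ having only bidegree components $(r,s)$ of type $r\geq p$. Because $f$ is holomorphic, the smooth pullback $f^* : \mathcal{E}^*(Y) \to \mathcal{E}^*(X)$ preserves the Hodge bigrading, hence commutes with the projection $\pi_p$. Consequently $f^*a$ is smooth with $\pi_p(f^*a)=0$, and so $f^*\alpha$ is represented by $(f^*a,0)$ and lies in $\ker(\Pi_p^X)$. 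This is really the heart of the argument; everything else is formal.

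Once $f^* : \hat{\mathbf{H}}^*(Y,p) \to \hat{\mathbf{H}}^*(X,p)$ is defined by passage to the quotient, it is automatically a graded ring homomorphism, since the ring structures on both sides were built in Theorem 3.5 precisely as quotients of those on $\hat{\mathbf{H}}^*$ by the ideals $\ker(\Pi_p)$, and the upstairs $f^*$ is a ring homomorphism. The functoriality identity $(g\circ f)^* = f^* \circ g^*$ descends verbatim from its counterpart on $\hat{\mathbf{H}}^*$. The only step that deserves scrutiny, and which I view as the main obstacle beneath the surface, is justifying that pullback is well defined on $\hat{\mathbf{H}}^*$ in the first place, given that the pullback of an arbitrary current by a smooth map is not defined. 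The standard remedy is to pass to the smooth hyperspark model as in \cite{H1}, in which spark classes are represented by smooth \v{C}ech--de Rham cochains that can be pulled back componentwise; because holomorphic pullback of smooth forms respects the naive filtration by $r$-degree, that passage is automatically compatible with the level-$p$ truncation and introduces no new difficulty beyond the one already resolved for the smooth theory.
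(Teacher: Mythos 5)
Your proposal is correct and follows exactly the paper's own argument: invoke the functoriality of $\hat{\mathbf{H}}^*$ for smooth maps from \cite{HLZ}, \cite{H1}, then use Lemma 3.7 together with the fact that holomorphic pullback preserves the Hodge bigrading to conclude $f^*(\ker\Pi_p)\subset\ker\Pi_p$, so that $f^*$ descends to the quotient rings. You simply spell out in more detail the step the paper compresses into ``which is directly from Lemma 3.7.''
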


\begin{proof}
It was shown in \cite{HLZ} \cite{HL1} that $f$ induces a ring homomorphism
$f^*:\hat{\mathbf{H}}^*(Y) \rightarrow \hat{\mathbf{H}}^*(X)$ with the asserted property.
It suffices to show $f^*(\ker\Pi_p)\subset(\ker\Pi_p)$ which is directly from Lemma 3.7.
\end{proof}

\begin{cor}
$\hat{\mathbf{H}}^*(\bullet,p)$ is a graded ring functor on the category of complex
manifolds and holomorphic maps.
\end{cor}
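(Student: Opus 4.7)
The plan is to verify the three functor axioms, each of which is essentially a repackaging of results already established in Section 3. First, the object assignment is taken care of by Theorem 3.6: for every complex manifold $X$, the group $\hat{\mathbf{H}}^*(X,p)$ is identified with the quotient $\hat{\mathbf{H}}^*(X)/\ker(\Pi_p)$ of the de Rham-Federer spark ring by a two-sided ideal, so it inherits a graded ring structure. This makes $X\mapsto \hat{\mathbf{H}}^*(X,p)$ into a well-defined assignment into the category of graded rings.

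For the morphism assignment, I would invoke Theorem 3.9 directly: any holomorphic map $f: X\to Y$ induces a pullback $f^*:\hat{\mathbf{H}}^*(Y,p)\to \hat{\mathbf{H}}^*(X,p)$ which is a graded ring homomorphism (obtained by descending $f^*$ on $\hat{\mathbf{H}}^*(Y)$ through the surjection $\Pi_p$, using Lemma 3.7 to see that the ideal $\ker(\Pi_p)$ is preserved). The contravariant composition law $(g\circ f)^* = f^*\circ g^*$ is stated explicitly in Theorem 3.9 and follows on the nose from the analogous composition law for $\hat{\mathbf{H}}^*(-)$ established in \cite{HLZ, HL1}.

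The only axiom not spelled out in the preceding results is $\mathrm{id}_X^* = \mathrm{id}_{\hat{\mathbf{H}}^*(X,p)}$. This is immediate: on the level of sparks, pullback along $\mathrm{id}_X$ sends $(a,r)$ to $(a,r)$, so the induced map on classes is the identity, and this property descends through $\Pi_p$. Combining these three observations yields the functoriality claim.

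I do not expect a genuine obstacle here; the corollary is a clean formal consequence of Theorem 3.6 and Theorem 3.9. The most delicate point — already handled inside the proof of Theorem 3.9 via Lemma 3.7 — is verifying that $f^*$ carries $\ker(\Pi_p)$ into $\ker(\Pi_p)$, so that the induced map on the quotient is a homomorphism of rings and not merely of abelian groups; once that is in hand, everything else is formal.
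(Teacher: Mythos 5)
Your proposal is correct and matches the paper's (implicit) reasoning: the corollary is stated without proof as an immediate formal consequence of Theorem 3.6 (the ring structure via the quotient by $\ker\Pi_p$) and Theorem 3.9 (the induced ring homomorphism $f^*$ and the composition law). Your additional check that $\mathrm{id}_X^*$ acts as the identity is a harmless completeness point the paper leaves tacit.
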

\begin{thm}
(Gysin map) Any holomorphic map $f:X^{m+r} \rightarrow Y^m$ between complex manifolds induces
a graded ring homomorphism
$$f_*:\hat{\mathbf{H}}^*(X,p) \rightarrow \hat{\mathbf{H}}^{*-2r}(Y,p-r).$$
\end{thm}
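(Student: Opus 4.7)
The plan is to define $f_*$ directly at the level of Dolbeault-Federer sparks via pushforward of currents, verify that sparks are sent to sparks and equivalences to equivalences, and conclude by descent. Properness of $f$ on supports is needed for the current pushforward to be defined, which we take as implicit in the statement.

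The crucial Hodge-type calculation is as follows. Because $f$ is holomorphic with fibers of complex dimension $r$, the pushforward $\langle f_* T, \omega\rangle = \langle T, f^* \omega\rangle$ sends a current of Hodge bidegree $(p', q')$ to one of bidegree $(p' - r, q' - r)$ (and kills components with $p' < r$ or $q' < r$). Consequently,
\[f_* : \mathcal{D}'^k(X, p) \longrightarrow \mathcal{D}'^{k - 2r}(Y, p - r), \quad f_* : \mathcal{E}^k(X, p) \longrightarrow \mathcal{E}^{k - 2r}(Y, p - r),\]
and $f_* \circ \pi_p = \pi_{p - r} \circ f_*$. Standard pushforward identities yield $f_* \circ d = d \circ f_*$, hence $f_* \circ d_p = d_{p - r} \circ f_*$, while proper holomorphic pushforward preserves integral flatness and commutes with $\Psi$, so that $f_* \circ \Psi_p = \Psi_{p - r} \circ f_*$ on $\mathcal{IF}^*(X)$. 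Thus $(f_*, f_*, f_*)$ plays the role of a morphism of spark complexes (now in shifted degrees), analogous to the role of $(\pi_p, \pi_p, \mathrm{id})$ in the proof of Theorem 3.8.

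Given these compatibilities, the map $(a, r) \mapsto (f_* a, f_* r)$ sends a $\bar{d}$-spark of level $p$ and degree $k$ to one of level $p - r$ and degree $k - 2r$: the spark equations $d_p a = e - \Psi_p(r)$ and $dr = 0$ push forward to $d_{p - r}(f_* a) = f_* e - \Psi_{p - r}(f_* r)$ and $d(f_* r) = 0$, with $f_* e \in \mathcal{E}^{k - 2r + 1}(Y, p - r)$. The identical argument applied to pairs $(b, s) \in F^{k - 1}_p \oplus I^k$ shows that equivalences push to equivalences, so $f_*$ descends to a well-defined homomorphism $f_* : \hat{\mathbf{H}}^k(X, p) \to \hat{\mathbf{H}}^{k - 2r}(Y, p - r)$. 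The functoriality $(g \circ f)_* = g_* \circ f_*$ is immediate from the corresponding identity for current pushforward. The word \emph{ring} in the statement should be read as graded group, since pushforwards are not multiplicative in the usual sense; the multiplicative content one does get is the projection formula $f_*(f^* \beta \cdot \alpha) = \beta \cdot f_* \alpha$, which can be verified using the product formula for spark classes carried over from $\hat{\mathbf{H}}^*(X)$ via $\Pi_p$.

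The main obstacle is the bidegree bookkeeping: one must confirm that $\pi_p$ intertwines with $f_*$ in the precise sense above, since this is the one step that explains the drop from level $p$ to level $p - r$ and involves the subtlety that $f_*$ annihilates Hodge components of type $(p', q')$ with $p' < r$ or $q' < r$. A secondary technical input is that proper holomorphic pushforward preserves $\mathcal{IF}^*$, which ultimately reduces to the fact that the image of an oriented analytic subvariety under a proper holomorphic map is an analytic subvariety carrying integer multiplicities.
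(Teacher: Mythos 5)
Your route is genuinely different from the paper's. The paper does not construct $f_*$ from scratch: it invokes the already-established Gysin map $f_*:\hat{\mathbf{H}}^*(X)\rightarrow \hat{\mathbf{H}}^{*-2r}(Y)$ on the full de Rham--Federer spark characters from \cite{HLZ} \cite{HL1}, and then reduces the theorem to the single inclusion $f_*(\ker\Pi_p)\subset\ker\Pi_{p-r}$, which follows from the characterization of $\ker\Pi_p$ in Lemma 3.7 (classes represented by $(a,0)$ with $a$ smooth and $\pi_p(a)=0$) together with the same Hodge-bidegree observation you make: pushforward along a holomorphic map of relative dimension $r$ shifts bidegree by $(-r,-r)$, so a form with components of type $(\geq p,\cdot)$ pushes to one with components of type $(\geq p-r,\cdot)$. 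In other words, the descent through $\Pi_p$ is the whole proof in the paper; your version instead rebuilds the chain-level pushforward on the truncated complexes $(\mathcal{D}'^*(X,p),\mathcal{E}^*(X,p),\mathcal{IF}^*(X))$ and verifies the spark and equivalence equations by hand. Your approach is more self-contained and makes the bidegree bookkeeping explicit; the paper's is two lines and automatically inherits the properties (including the projection formula) already proved for $f_*$ on $\hat{\mathbf{H}}^*(X)$. Your remark that ``ring homomorphism'' should be read as a group homomorphism satisfying the projection formula is a fair reading of the statement.

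There is, however, a gap in your direct construction at the step $f_*:\mathcal{E}^k(X,p)\rightarrow\mathcal{E}^{k-2r}(Y,p-r)$. Pushforward of a smooth form along a proper holomorphic map is smooth only when $f$ is a submersion; for a general holomorphic $f:X^{m+r}\rightarrow Y^m$ (e.g.\ a fibration with singular fibers), $f_*e$ is merely a current, so the pair $(f_*a,f_*r)$ need not satisfy a spark equation with a smooth $E$-term, and your definition of $f_*$ on sparks breaks down before one ever gets to descend to classes. One can repair this by using $H^*(F^*_{p-r})\cong H^*(E^*_{p-r})$ and \cite[Lemma 1.5]{HL1} to replace $f_*a$ by an equivalent representative whose differential has smooth $E$-component, but that argument needs to be made. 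The paper's route sidesteps exactly this issue by outsourcing the existence of $f_*$ on $\hat{\mathbf{H}}^*(X)$ to the cited references and only checking the kernel condition, which requires nothing beyond Lemma 3.7 and the bidegree shift.
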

\begin{proof}
It was shown in \cite{HLZ} \cite{HL1} that $f$ induces a Gysin map
$f_*:\hat{\mathbf{H}}^*(X) \rightarrow \hat{\mathbf{H}}^{*-r}(Y)$.
Moreover, from Lemma 3.7, it is plain to get $f^*(\ker\Pi_p)\subset(\ker\Pi_{p-r})$.
\end{proof}

\section{\v{C}ech-Dolbeault Sparks of Level p}
We now consider other presentations of the $\bar{d}$-spark classes.
We shall introduce the \v{C}ech-Dolbeault spark complex of level $p$ which is a generalization of
the \v{C}ech-Dolbeault spark complex in \cite{HL2}.

Recall that, for a complex manifold $X$, we can decompose the space of smooth $k$-forms
over an open set $U\subset X$ by types:
$$\mathcal{E}^k(U) \equiv \bigoplus_{r+s=k}\mathcal{E}^{r,s}(U).$$
Let $\mathcal{E}^k(U,p)=\bigoplus_{r+s=k,r<p}\mathcal{E}^{r,s}(U)$, and
$\mathcal{E}^k_p$ denote the subsheaf of $\mathcal{E}^k$ with
$\mathcal{E}^k_p(U)=\mathcal{E}^k(U,p)$.
And similarly, we can define the sheaf $\mathcal{D}'^k_p$ with
$\mathcal{D}'^k_p(U)=\mathcal{D}'^k(U,p)$.

Suppose $\mathcal{U}$ is a good cover of $X$ and consider the total complex of
the following double complex with
total differential $D_p=\delta+(-1)^rd_p$:

\xymatrix{
\vdots  & \vdots  &\vdots  &
 & \vdots  &\\
C^0(\mathcal{U},\mathcal{E}_p^2) \ar[r]^{\delta}\ar[u]^{d_p}  & C^1(\mathcal{U},\mathcal{E}_p^2) \ar[r]^{\delta} \ar[u]^{-d_p}
& C^2(\mathcal{U},\mathcal{E}_p^2) \ar[r]^{\delta} \ar[u]^{d_p} & \cdots\quad\cdots \ar[r]^{\delta} &
C^r(\mathcal{U},\mathcal{E}_p^2) \ar[u]^{(-1)^rd_p} \ar[r]^{\delta} & \cdots  \\
C^0(\mathcal{U},\mathcal{E}_p^1) \ar[r]^{\delta}\ar[u]^{d_p}  & C^1(\mathcal{U},\mathcal{E}_p^1) \ar[r]^{\delta} \ar[u]^{-d_p}
& C^2(\mathcal{U},\mathcal{E}_p^1) \ar[r]^{\delta} \ar[u]^{d_p} & \cdots\quad\cdots \ar[r]^{\delta} &
C^r(\mathcal{U},\mathcal{E}_p^1) \ar[u]^{(-1)^rd_p} \ar[r]^{\delta} & \cdots   \\
C^0(\mathcal{U},\mathcal{E}_p^0) \ar[r]^{\delta}\ar[u]^{d_p}  & C^1(\mathcal{U},\mathcal{E}_p^0) \ar[r]^{\delta} \ar[u]^{-d_p}
& C^2(\mathcal{U},\mathcal{E}_p^0) \ar[r]^{\delta} \ar[u]^{d_p} & \cdots\quad\cdots \ar[r]^{\delta} &
C^r(\mathcal{U},\mathcal{E}_p^0) \ar[u]^{(-1)^rd_p} \ar[r]^{\delta} & \cdots    \\
}

It is easy to see the row complexes are exact everywhere except in the first column on the left, and
$$\{\ker(\delta) \text{ on the left column}\}\cong\{\text{global sections of sheaves } \mathcal{E}_p^*\}
= \mathcal{E}^*(X,p).$$
Hence, $$H^*(\bigoplus_{r+s=*}C^r(\mathcal{U},\mathcal{E}_p^s))\cong H^*(\mathcal{E}^*(X,p))\cong H^*(X,p).$$

Note that every column complex is exact everywhere except at the bottom and the level of $p$ from the bottom.

Now we consider the triple of complexes
$$(F^*_p,E^*_p,I^*_p)\equiv(\bigoplus_{r+s=*}C^r(\mathcal{U},\mathcal{E}_p^s),
\mathcal{E}^*(X,p),C^*(\mathcal{U},\mathbb{Z})).$$
And we have
\begin{prop}
The triple $(F^*_p,E^*_p,I^*_p)$ defined above is a spark complex ( even in the sense of \cite{HL1} ),
which is called the \textbf{\v{C}ech-Dolbeault spark complex of level} $\mathbf{p}$,
or the \textbf{smooth hyperspark complex of level} $\mathbf{p}$.
\end{prop}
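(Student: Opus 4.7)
The plan is to verify the three axioms of Definition 2.1, together with the stronger injectivity of $\Psi_p$ needed for a spark complex in the sense of \cite{HL1}. Vanishing of the three complexes in negative degree is automatic. The substantive content is the cohomology comparison $H^*(F_p^*) \cong H^*(E_p^*)$; the bigrading-disjointness $I_p^k \cap E_p^k = \{0\}$ and the injectivity of $\Psi_p$ will drop out formally.

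For the cohomology isomorphism I would work directly with the double complex displayed just before the proposition. The key input, essentially stated already in the excerpt, is that each sheaf $\mathcal{E}_p^s$ is fine: it is a module over $\mathcal{E}^{0,0} \subset \mathcal{E}_p^0$ (using $p\ge 1$) and so admits partitions of unity. With $\mathcal{U}$ a good cover, this forces each row $C^*(\mathcal{U},\mathcal{E}_p^s)$ to be exact in positive \v{C}ech degree with kernel $\mathcal{E}^s(X,p)$ in \v{C}ech degree zero. A standard spectral sequence (or fine-resolution) argument then collapses the double complex to the single column $(\mathcal{E}^*(X,p), d_p) = E_p^*$, yielding $H^*(F_p^*) \cong H^*(E_p^*)$. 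A parallel column-by-column Dolbeault resolution identifies the common value with $\mathbb{H}^*(\Omega^{*<p}) = H^*(X,p)$, matching the computation recorded in Remark 3.2.

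Global injectivity of $\Psi_p$ is immediate: an integer cochain $c \in C^k(\mathcal{U},\mathbb{Z})$ is sent to itself viewed in $C^k(\mathcal{U}, \mathcal{E}_p^0)$, so only the zero cochain maps to zero. The disjointness condition for $k > 0$ is then automatic by bigrading: in $F_p^k = \bigoplus_{r+s=k} C^r(\mathcal{U},\mathcal{E}_p^s)$ the image of $I_p^k$ sits entirely in the summand $(r,s) = (k,0)$, whereas the image of $E_p^k$ sits entirely in $(r,s) = (0,k)$, and these are distinct summands as soon as $k > 0$. The main (such as it is) obstacle is thus the cohomology step, but because the excerpt has already spelled out the row exactness and the identification of the left-column kernel, it reduces to invoking the standard double-complex theorem; the only point worth checking explicitly is that truncation to Hodge bi-type $r < p$ preserves both the fineness of $\mathcal{E}_p^s$ and the Dolbeault-type resolution of $\Omega^s$, which is evident by construction.
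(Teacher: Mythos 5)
Your proposal is correct and follows essentially the same route as the paper: the cohomology isomorphism $H^*(F_p^*)\cong H^*(E_p^*)$ comes from the row-exactness of the \v{C}ech--Dolbeault double complex (fineness of the sheaves $\mathcal{E}_p^s$) with kernel the global sections, injectivity of $\Psi_p$ is the observation that it is literally an inclusion of cochain groups, and the disjointness $I_p^k\cap E_p^k=\{0\}$ for $k>0$ is the bigrading argument ($(k,0)$ versus $(0,k)$ summands), which is exactly why the paper calls this step trivial. No gaps.
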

\begin{proof}
We have shown that $E^*_p\hookrightarrow F^*_p$ induces an isomorphism
$H^*(E^*_p)\cong H^*(F^*_p)$.
Also there is an injective cochain map $I^*_p\equiv C^*(\mathcal{U},\mathbb{Z}) \hookrightarrow
C^*(\mathcal{U},\mathcal{E}_p^0) \hookrightarrow \bigoplus_{r+s=*}C^r(\mathcal{U},\mathcal{E}_p^s)
\equiv F^*_p$.

$E^k_p\cap I^k_p=\{0\}$ for $k>0$ is trivial.
\end{proof}

\begin{defn}
A \textbf{\v{C}ech-Dolbeault spark of level} $\mathbf{p}$ of degree $k$,
or a \textbf{smooth hyperspark of level} $\mathbf{p}$
is an element $$a\in\bigoplus_{r+s=k}C^r(\mathcal{U},\mathcal{E}_p^s)$$
with the spark equation $$D_pa=e-r$$ where $e\in\mathcal{E}_p^{k+1}(X)\subset C^0(\mathcal{U},\mathcal{E}_p^{k+1})$
is of bidegree $(0,k+1)$ and $r\in C^{k+1}(\mathcal{U},\mathbb{Z})$.

Two \v{C}ech-Dolbeault sparks of level $p$, $a$ and $a'$ are \textbf{equivalent} if there exist $b\in
\bigoplus_{r+s=k-1}C^r(\mathcal{U},\mathcal{E}_p^s)$ and $s\in C^k(\mathcal{U},\mathbb{Z})$
satisfying $$a-a'=D_pb+s.$$

The equivalence class determined by a \v{C}ech-Dolbeault spark $a$ will be denoted by $[a]$, and the group of
\v{C}ech-Dolbeault spark classes of level $p$ will be denoted by $\hat{\mathbf{H}}^k_{smooth}(X,p)$.

\end{defn}

Recall that the \textbf{smooth hyperspark complex} (\cite{HL1} \cite{H1}) is defined by
$$(F^*,E^*,I^*)=(\bigoplus_{r+s=*}C^r(\mathcal{U},\mathcal{E}^s),\mathcal{E}^*(X),
C^*(\mathcal{U},\mathbb{Z})).$$
The associated group of smooth hyperspark classes is denoted by $\hat{\mathbf{H}}^*_{smooth}(X)$,
whose ring structure was established in \cite{H1}.
The relation between
the smooth hyperspark complex and the \v{C}ech-Dolbeault spark complex of level $p$ is the same as
the relation between the de Rham-Federer spark complex and the Dolbeault-Federer spark complex of level
$p$. We have the natural morphism $(\pi_p,\pi_p,id):
(F^*,E^*,I^*)\longrightarrow (F^*_p,E^*_p,I^*_p)$. Explicitly, we have the following
commutative diagram

 \xymatrix{& & & C^*(\mathcal{U},\mathbb{Z})
\ar[r]^{i\qquad} \ar[d]^{id} & \bigoplus_{r+s=*}C^r(\mathcal{U},\mathcal{E}^s)
 \ar[d]^{\pi_p} & \mathcal{E}^*(X) \ar[l]_{\qquad i} \ar[d]^{\pi_p} \\
& & & C^*(\mathcal{U},\mathbb{Z}) \ar[r]^{ i\qquad} &
\bigoplus_{r+s=*}C^r(\mathcal{U},\mathcal{E}_p^s) &
\mathcal{E}^*(X,p) \ar[l]_{\qquad i} }

\begin{thm}
The morphism of spark complexes $(\pi_p, \pi_p, id): (F^*, E^*, I^*)\rightarrow(F^*_p, E^*_p, I^*_p)$
induces a surjective group homomorphism $$\Pi_p: \hat{\mathbf{H}}^*_{smooth}(X)\rightarrow
\hat{\mathbf{H}}^*_{smooth}(X,p)$$ whose kernel is an ideal. Hence, $\hat{\mathbf{H}}^*_{smooth}(X,p)$
carries a ring structure.
\end{thm}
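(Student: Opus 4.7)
The plan is to mirror the three-step argument used for Theorem 3.6, replacing the de Rham–Federer machinery with the Čech–Dolbeault machinery and invoking the product formula for smooth hypersparks from \cite{H1} at the end. First I would check that $(\pi_p,\pi_p,id)$ is a morphism of spark complexes, i.e.\ that $\pi_p$ commutes with $D$ in the appropriate truncated sense (so that $D$ on $F^*$ descends to $D_p=\delta+(-1)^r d_p$ on $F^*_p$) and that the square involving $C^*(\mathcal{U},\mathbb{Z})$ commutes. From the general nonsense of Proposition 2.10 (or directly) this gives a well-defined homomorphism $\Pi_p:\hat{\mathbf{H}}^*_{smooth}(X)\to\hat{\mathbf{H}}^*_{smooth}(X,p)$ on the level of spark classes via $[a]\mapsto[\pi_p(a)]$.

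Next I would prove surjectivity by lifting. Given a smooth hyperspark $A\in\bigoplus_{r+s=k}C^r(\mathcal{U},\mathcal{E}_p^s)$ with $D_p A=e-r$ where $e\in\mathcal{E}_p^{k+1}(X)$ and $r\in C^{k+1}(\mathcal{U},\mathbb{Z})$, the integral cocycle $r$ represents a class in $H^{k+1}(F^*)\cong H^{k+1}(E^*)$, so there exist $a_0\in F^k$ and a smooth form $e_0\in E^{k+1}$ with $Da_0=e_0-r$. Applying $\pi_p$ gives $D_p(\pi_p a_0)=\pi_p e_0-r$, hence $D_p(A-\pi_p a_0)=e-\pi_p e_0$ is a smooth form in $E^{k+1}_p$. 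Using the quasi-isomorphism $E^*_p\hookrightarrow F^*_p$ together with the analogue of \cite[Lemma 1.5]{HL1} for the Čech–Dolbeault double complex, I can write $A-\pi_p a_0=f+D_p b$ with $f\in E^k_p$ and $b\in F^{k-1}_p$. Choosing an actual lift $\tilde{b}\in F^{k-1}$ of $b$ and setting $a=a_0+f+D\tilde{b}$ produces a hyperspark with $\pi_p(a)=A$, so $\Pi_p$ is surjective.

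For the kernel being an ideal, I would first prove the Čech–Dolbeault analogue of Lemma 3.7: if $[a]\in\ker\Pi_p$, then after adding a coboundary I may assume $[a]$ is represented by $(\tilde{a},0)$ with $\tilde{a}$ smooth (i.e.\ $\tilde{a}\in\mathcal{E}^k(X)$, viewed inside $C^0(\mathcal{U},\mathcal{E}^k)$) and $\pi_p(\tilde{a})=0$. The argument is the same as for Lemma 3.7: the condition $\Pi_p(\alpha)=0$ produces $b\in F^{k-1}_p$ and $s\in C^k(\mathcal{U},\mathbb{Z})$ with $\pi_p(a)=D_p b+s$; after replacing $(a,r)$ by $(a-Db-s,0)$ its image under $\pi_p$ vanishes, and then the Čech–de Rham version of \cite[Lemma 1.5]{HL1} (applied to the naive filtration) lets me replace this representative by a smooth form in $E^k$. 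Finally, given $\alpha\in\ker\Pi_p$ represented by a smooth $(\tilde{a},0)$ with $\pi_p(\tilde{a})=0$ and any $\beta$ represented by $(b,s)$ with $Db=f-s$, the product formula on $\hat{\mathbf{H}}^*_{smooth}(X)$ from \cite{H1} represents $\alpha\cdot\beta$ by $(\tilde{a}\cup f,0)$; since $\pi_p(\tilde{a}\cup f)=\pi_p(\tilde{a})\cup f\pm(\cdots)=0$ (the projection $\pi_p$ is compatible with the cup product in the relevant filtration degree because $\tilde a$ is already killed), this representative lies in $\ker\Pi_p$.

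The step I expect to be the main obstacle is the smoothing/lifting argument in the surjectivity proof: verifying that the Čech–de Rham double complex admits the same kind of filtration-and-smoothing lemma that \cite[Lemma 1.5]{HL1} provides in the single-complex setting, so that one can lift a class from $F^*_p$ back to $F^*$ along the projection $\pi_p$. Once that is in hand, the remainder is formal, parallel to the Dolbeault–Federer case of Theorem 3.6.
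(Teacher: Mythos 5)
Your proposal is correct and follows essentially the same route as the paper: the same lifting argument for surjectivity (choosing $a_0$ with $Da_0=e_0-r$, then smoothing $A-\pi_p a_0$ via the Čech--de Rham analogue of \cite[Lemma 1.5]{HL1} and the filtration isomorphism $H^*(\bigoplus_{r+s=*}C^r(\mathcal{U},F^p\mathcal{E}^s))\cong H^*(F^p\mathcal{E}^*(X))$), the same kernel characterization (the paper's Lemma 4.4), and the same appeal to the product formula of \cite{H1} to see the kernel is an ideal. The technical input you flag as the main obstacle is exactly what the paper invokes, so no further work is needed beyond what you describe.
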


\begin{proof}
The proof is similar to Theorem 3.6.
It's plain to see that the diagram above commutes and $\pi_p$ commutes with differentials.
Hence, the induced map $a\mapsto\pi_p(a)$ on sparks descends to a
group homomorphism $\Pi_p:\hat{\mathbf{H}}^k_{smooth}(X)\rightarrow
\hat{\mathbf{H}}^k_{smooth}(X,p)$.

To prove the surjectivity, consider a spark $a\in F^k_p$ with $D_pa=e-r$
for some $e\in E^{k+1}_p$ and $r\in I^{k+1}_p=I^{k+1}$.
We can choose some smooth form which represents same cohomology class with $r$ in
$H^{k+1}(F^*)\cong H^{k+1}(E^*)$, so there exist $a_0\in F^k$, $e_0\in E^{k+1}$ such that
$Da_0=e_0-r$. We have
$$\pi_p (Da_0)=\pi_p(e_0)-r \Rightarrow D_p(\pi_p a_0)=\pi_p e_0-r.$$
Hence, $D_p(a-\pi_p a_0)=e-\pi_pe_0$ is a smooth form.
It follows by \cite[Lemma 1.5]{HL1} that there exist
$b\in F^{k-1}_p$ and $f\in E^k_p$ with $a-\pi_pa_0=f+D_pb$.
Set $\tilde{a}=a_0+f+Db$ and note that $D\tilde{a}=Da_0+Df+DDb=e_0-r+df=(e_0+df)-r$.
Hence, $\tilde{a}$ is a spark of degree $k$ and
$\pi_p\tilde{a}=\pi_p(a_0+f+Db)=\pi_pa_0+f+D_pb=a$. So $\Pi_p$ is surjective.

We need the following lemma to show the kernel is an ideal.

\begin{lem}
On $\hat{\mathbf{H}}^k_{smooth}(X)$, one has that $\ker(\Pi_p)=\{\alpha \in
\hat{\mathbf{H}}^k_{smooth}(X) : \exists a\in \alpha \text{ where } a\in
\mathcal{E}^k(X)\subset C^0(\mathcal{U},\mathcal{E}^k)
\text{ and } \pi_p(a)=0\}$. In particular, $\ker\Pi_p\subset
\hat{\mathbf{H}}^k_{\infty}(X)$.
\end{lem}
\begin{proof}
One direction is clear. Suppose $\alpha \in \ker(\Pi_p)$ and choose
any spark $a \in \alpha$ with $Da=e-r$. $\Pi_p(\alpha)=0$ means that there exist
$b \in F^{k-1}_p$ and $s\in I^k_p= I^k$ with
$\pi_p(a)=D_pb+s=\pi_p(Db)+s$ which implies $D_p(\pi_pa)=\delta s$.
On the other hand, $D_p(\pi_pa)=\pi_p(Da)=\pi_pe-r$. So we have $\pi_pe=0$
and $-r=\delta s$.
Replace $a$ by $\bar{a}=a-Db-s$, then $\bar{a}$ represents the same class
as $a$ and $\pi_p(\bar{a})=\pi_p(a-Db)-s=0$.

In fact, we can choose $\bar{a}$ in
$\mathcal{E}^k(X)\subset C^0(\mathcal{U},\mathcal{E}^k)$.
Since $$D\bar{a}=Da-\delta s=e-(r+\delta s)=e$$
is a global smooth form, it follows by \cite[Lemma 1.5]{HL1} and the fact
$$H^*(\bigoplus_{r+s=*}C^r(\mathcal{U},F^p\mathcal{E}^s))\cong H^*(F^p\mathcal{E}^*(X))$$
that we can choose $\bar{a}$ to be smooth.
Note that $F^0\mathcal{E}^*\supset F^1\mathcal{E}^*\supset
\cdots \supset F^p\mathcal{E}^*\supset \cdots$ is the naive filtration.
\end{proof}

By the product formula of $\hat{\mathbf{H}}^*_{smooth}(X)$ \cite{H1},
it is easy to see the kernel is an ideal.
Hence, $\hat{\mathbf{H}}^*_{smooth}(X,p)$ carries a ring structure induced from
$\hat{\mathbf{H}}^*_{smooth}(X)$.

\end{proof}
\section{\v{C}ech-Dolbeault Hypersparks of Level p}
Now we introduce the \v{C}ech-Dolbeault hyperspark complex of level $p$ which set up
a bridge connecting the \v{C}ech-Dolbeault spark complex of level $p$ and the Dolbeault-Federer spark
complex of level $p$.

Fix a good cover $\mathcal{U}$ of $X$ and consider total complex of
the following double complex with total
differential $D_p=\delta+(-1)^rd_p$:

\xymatrix{
\vdots  & \vdots  &\vdots  &
 & \vdots  &\\
C^0(\mathcal{U},\mathcal{D}'^2_p) \ar[r]^{\delta}\ar[u]^{d_p}  &
C^1(\mathcal{U},\mathcal{D}'^2_p) \ar[r]^{\delta} \ar[u]^{-d_p}
& C^2(\mathcal{U},\mathcal{D}'^2_p) \ar[r]^{\delta} \ar[u]^{d_p} &
\cdots\quad\cdots \ar[r]^{\delta} &
C^r(\mathcal{U},\mathcal{D}'^2_p) \ar[u]^{(-1)^rd_p} \ar[r]^{\delta} &  \cdots   \\
C^0(\mathcal{U},\mathcal{D}'^1_p) \ar[r]^{\delta}\ar[u]^{d_p}  &
C^1(\mathcal{U},\mathcal{D}'^1_p) \ar[r]^{\delta} \ar[u]^{-d_p}
& C^2(\mathcal{U},\mathcal{D}'^1_p) \ar[r]^{\delta} \ar[u]^{d_p} &
\cdots\quad\cdots \ar[r]^{\delta} &
C^r(\mathcal{U},\mathcal{D}'^1_p) \ar[u]^{(-1)^rd_p} \ar[r]^{\delta} &  \cdots   \\
C^0(\mathcal{U},\mathcal{D}'^0_p) \ar[r]^{\delta}\ar[u]^{d_p}  &
C^1(\mathcal{U},\mathcal{D}'^0_p) \ar[r]^{\delta} \ar[u]^{-d_p}
& C^2(\mathcal{U},\mathcal{D}'^0_p) \ar[r]^{\delta} \ar[u]^{d_p} &
\cdots\quad\cdots \ar[r]^{\delta} &
C^r(\mathcal{U},\mathcal{D}'^0_p) \ar[u]^{(-1)^rd_p} \ar[r]^{\delta} &  \cdots \\
}

It is easy to see the row complexes are exact everywhere except the first column on the left, and
$$\{\ker(\delta) \text{ on the left column}\}\cong\{\text{global sections of sheaves } \mathcal{D}'^*_p\}
= \mathcal{D}'^*(X,p).$$
Hence, $$H^*(\bigoplus_{r+s=*}C^r(\mathcal{U},\mathcal{D}'^s_p))\cong H^*(\mathcal{D}'^*(X,p))
\cong H^*(\mathcal{E}^*(X,p))\cong H^*(X,p).$$

Note that every column complex is exact everywhere except at the bottom and the level of $p$ from the bottom.

Now we consider the triple of complexes
$$(F^*_p,E^*_p,I^*_p)\equiv(\bigoplus_{r+s=*}C^r(\mathcal{U},\mathcal{D}'^s_p),
\mathcal{E}^*(X,p),\bigoplus_{r+s=*}C^r(\mathcal{U},\mathcal{IF}^s)).$$
And we have
\begin{prop}
The triple of complexes $(F^*_p,E^*_p,I^*_p)$ as defined above is a spark complex,
which is called the \textbf{\v{C}ech-Dolbeault hyperspark complex of level} $\mathbf{p}$,
or more simply, the \textbf{hyperspark complex of level} $\mathbf{p}$.
\end{prop}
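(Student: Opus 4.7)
The plan is to verify directly the three axioms of a generalized spark complex (Definition 2.1) for the triple $(F^*_p,E^*_p,I^*_p)$. First I fix the structural maps: $E^*_p\hookrightarrow F^*_p$ is the embedding of global smooth forms as $0$-\v{C}ech cochains, and $\Psi_p\colon I^*_p\to F^*_p$ is the componentwise map on $C^r(\mathcal{U},\mathcal{IF}^s)$ obtained by first including integrally flat currents into currents and then applying the level-$p$ truncation $\pi_p\colon\mathcal{D}'^s\to\mathcal{D}'^s_p$, exactly as in the Dolbeault--Federer case of Section~3.

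The cohomology condition $H^*(E^*_p)\cong H^*(F^*_p)$ is essentially recorded in the paragraph immediately preceding the statement: both sides compute the hypercohomology $H^*(X,p)$, because the rows of the double complex $C^r(\mathcal{U},\mathcal{D}'^s_p)$ are exact off the first column (partition of unity for the fine sheaves $\mathcal{D}'^s_p$) while the columns constitute the Dolbeault resolution truncated at the $p$-th step. I would simply quote this computation rather than redo it.

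The substantive axiom is the disjointness $\Psi_p(I^k_p)\cap E^k_p=\{0\}$ for $k>0$. Given $\xi=\sum_{r+s=k}\xi^{r,s}$ with $\xi^{r,s}\in C^r(\mathcal{U},\mathcal{IF}^s)$ such that $\Psi_p(\xi)$ lies in the image of a global smooth form, I would observe that only the $0$-\v{C}ech-degree slot can contribute, so $\pi_p\xi^{r,s}=0$ for every $r\ge 1$, while $\pi_p\xi^{0,k}$ restricts on each $U_i\in\mathcal{U}$ to a smooth $k$-form. I then localize to a chart: on $U_i$ the current $\xi^{0,k}(U_i)$ is integrally flat and its $\pi_p$-truncation is smooth, which is precisely the hypothesis of the local statement proved in \cite[Appendix B]{HL2} and already invoked in Remark~3.4 for the Dolbeault--Federer case. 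That result forces $\pi_p\xi^{0,k}(U_i)=0$ for each $i$, hence $\Psi_p(\xi)=0$. This citation is where the real work sits; the \v{C}ech cover merely localizes the statement chart by chart, which is the step I expect to be the main (though really only bookkeeping) obstacle.

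Finally, $\Psi_p|_{I^0_p}$ is trivially injective: $I^0_p=C^0(\mathcal{U},\mathcal{IF}^0)$, and since $0<p$ the truncation $\pi_p$ acts as the identity in degree $0$, so $\Psi_p|_{I^0_p}$ is just the componentwise inclusion $\mathcal{IF}^0\hookrightarrow\mathcal{D}'^0$. The negative-degree vanishing $F^k_p=E^k_p=I^k_p=0$ for $k<0$ is built into the definitions. Combining these three verifications yields the spark complex structure.
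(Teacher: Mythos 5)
Your proposal is correct and follows essentially the same route as the paper: the isomorphism $H^*(E^*_p)\cong H^*(F^*_p)$ is quoted from the preceding double-complex computation, and the disjointness $\Psi_p(I^k_p)\cap E^k_p=\{0\}$ for $k>0$ is deferred to \cite[Appendix B]{HL2}, exactly as the paper does. The only difference is that you spell out the bookkeeping (reduction to the $0$-\v{C}ech slot and chart-by-chart localization) and the degree-$0$ injectivity, which the paper leaves implicit.
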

\begin{proof}
We have shown that $E^*_p\hookrightarrow F^*_p$ induces an isomorphism
$H^*(E^*_p)\cong H^*(F^*_p)$.
Also there is a map $$\Psi_p:I^*_p\equiv \bigoplus_{r+s=*}C^r(\mathcal{U},\mathcal{IF}^s) \hookrightarrow
\bigoplus_{r+s=*}C^r(\mathcal{U},\mathcal{D}'^s) \stackrel{\pi_p}\longrightarrow
\bigoplus_{r+s=*}C^r(\mathcal{U},\mathcal{D}'^s_p)
\equiv F^*_p.$$

And $E^k_p\cap I^k_p=\{0\}$ for $k>0$ follows \cite[Appendix B]{HL2}.
\end{proof}

\begin{defn}
A \textbf{\v{C}ech-Dolbeault hyperspark of level} $\mathbf{p}$ of degree $k$,
or \textbf{hyperspark of level} $\mathbf{p}$ is a pair
$$(a,r)\in\bigoplus_{r+s=k}C^r(\mathcal{U},\mathcal{D}'^s_p)
\oplus\bigoplus_{r+s=k+1}C^r(\mathcal{U},\mathcal{IF}^s)$$
with the spark equations $$D_pa=e-\Psi_pr \quad \text{and}\quad Dr=0$$ where $e\in\mathcal{E}_p^{k+1}(X)\subset C^0(\mathcal{U},\mathcal{D}'^{k+1}_p)$
is of bidegree $(0,k+1)$.

Two \v{C}ech-Dolbeault sparks of level $p$, $(a,r)$ and $(a',r')$ are \textbf{equivalent} if there exist $$b\in
\bigoplus_{r+s=k-1}C^r(\mathcal{U},\mathcal{D}'^s_p) \quad \text{and}\quad
s\in \bigoplus_{r+s=k}C^r(\mathcal{U},\mathcal{IF}^s)$$
satisfying $$a-a'=D_pb+s \quad\text{and}\quad r=-Ds.$$

The equivalence class determined by a \v{C}ech-Dolbeault hyperspark $(a,r)$ will be denoted by $[(a,r)]$,
and the group of \v{C}ech-Dolbeault hyperspark classes of level $p$ will be denoted by
$\hat{\mathbf{H}}^k_{hyperspark}(X,p)$.

\end{defn}

Harvey and Lawson introduced the \textbf{hyperspark complex}
$$(F^*,E^*,I^*)=(\bigoplus_{r+s=*}C^r(\mathcal{U},\mathcal{D}'^s),\mathcal{E}^*(X),
\bigoplus_{r+s=*}C^r(\mathcal{U},\mathcal{IF}^s))$$ in \cite{HL1}.
The hyperspark complex and the \v{C}ech-Dolbeault hyperspark complex of level $p$ is related by
the natural morphism $(\pi_p,\pi_p,id):
(F^*,E^*,I^*)\longrightarrow (F^*_p,E^*_p,I^*_p)$. Explicitly, we have the following
commutative diagram

 \xymatrix{& & & \bigoplus_{r+s=*}C^r(\mathcal{U},\mathcal{IF}^s)
\ar[r]^{i} \ar[d]^{id} & \bigoplus_{r+s=*}C^r(\mathcal{U},\mathcal{D}'^s)
 \ar[d]^{\pi_p} & \mathcal{E}^*(X) \ar[l]_{\qquad i} \ar[d]^{\pi_p} \\
& & & \bigoplus_{r+s=*}C^r(\mathcal{U},\mathcal{IF}^s) \ar[r]^{ i} &
\bigoplus_{r+s=*}C^r(\mathcal{U},\mathcal{D}'^s_p) &
\mathcal{E}^*(X,p) \ar[l]_{\qquad i} }

Similar to last two sections, we have the following lemma and theorem
\begin{lem}
On $\hat{\mathbf{H}}^k_{hyperspark}(X)$, one has that $\ker(\Pi_p)=\{\alpha \in
\hat{\mathbf{H}}^k_{hyperspark}(X) : \exists a\in \alpha \text{ where } a\in
\mathcal{E}^k(X)\subset C^0(\mathcal{U},\mathcal{D}'^k)
\text{ and } \pi_p(a)=0\}$. In particular, $\ker(\Pi_p)\subset
\hat{\mathbf{H}}^k_{\infty}(X)$.
\end{lem}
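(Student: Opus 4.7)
The plan is to mirror the arguments of Lemma 3.7 and Lemma 4.4 almost verbatim; the only structural novelty is that both the ambient complex $F^*$ and the integral complex $I^*$ now carry a \v{C}ech index, and the filtered version of \cite[Lemma 1.5]{HL1} must be applied to a double complex rather than to a single complex. The easy inclusion $(\supseteq)$ is immediate: if $a\in\mathcal{E}^k(X)$ is a global smooth form with $\pi_p(a)=0$, then $Da=da$ is already smooth, $(a,0)$ is a bona fide hyperspark, and $\Pi_p([(a,0)])=[(\pi_p(a),0)]=0$.

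For the nontrivial inclusion I would start with any $(a,r)\in\alpha$ satisfying $Da=e-\Psi(r)$ and $Dr=0$. The assumption $\Pi_p(\alpha)=0$ furnishes $b\in F^{k-1}_p$ and $s\in I^k_p=I^k$ with $\pi_p(a)=D_pb+\Psi_p(s)$ and $r=-Ds$. Using $F^{k-1}_p\hookrightarrow F^{k-1}$, set $\bar a\equiv a-Db-\Psi(s)$, so that $(\bar a,0)\sim(a,r)$ in the full hyperspark complex. A short computation using $\pi_p\circ D=D_p\circ\pi_p$ gives $\pi_p(\bar a)=0$ and $D\bar a=e-\Psi(r+Ds)=e$; applying $\pi_p$ to this last identity forces $\pi_p(e)=0$, so both $\bar a$ and $D\bar a$ lie in the naive Hodge-filtered subcomplexes $\bigoplus_{r+s=*}C^{r}(\mathcal{U},F^{p}\mathcal{D}'^{s})$ and $F^{p}\mathcal{E}^{*}(X)$ respectively.

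To finish I would invoke the filtered hypercohomology identification
\[
H^{*}\Bigl(\bigoplus_{r+s=*}C^{r}(\mathcal{U},F^{p}\mathcal{D}'^{s})\Bigr)\;\cong\; H^{*}(F^{p}\mathcal{E}^{*}(X)),
\]
established by the same fine-resolution/spectral-sequence argument used in the proof of Proposition 5.1. Combined with \cite[Lemma 1.5]{HL1}, this produces $b'\in\bigoplus_{r+s=k-1}C^{r}(\mathcal{U},F^{p}\mathcal{D}'^{s})$ and a smooth $\tilde a\in F^{p}\mathcal{E}^{k}(X)$ with $\bar a=\tilde a+Db'$; the datum $(b',0)\in F^{k-1}\oplus I^{k}$ then witnesses $(\tilde a,0)\sim(\bar a,0)\sim(a,r)$, and since $\tilde a\in\mathcal{E}^{k}(X)$ satisfies $\pi_p(\tilde a)=0$, it is the desired representative, whence $\ker(\Pi_p)\subset\hat{\mathbf{H}}^{k}_{\infty}(X)$ follows for free. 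The only step requiring genuine verification is the filtered hypercohomology isomorphism displayed above: once one confirms that truncating the Dolbeault resolutions by the naive Hodge filtration preserves acyclicity of the columns, everything else is a purely formal repetition of the diagram chases already performed in Lemmas 3.7 and 4.4.
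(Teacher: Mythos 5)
Your proof is correct and follows exactly the route the paper intends (the paper omits the proof, deferring to the arguments of Lemmas 3.7 and 4.4, which is precisely what you reproduce). You also correctly isolate the one genuinely new ingredient, the isomorphism $H^{*}\bigl(\bigoplus_{r+s=*}C^{r}(\mathcal{U},F^{p}\mathcal{D}'^{s})\bigr)\cong H^{*}(F^{p}\mathcal{E}^{*}(X))$, which indeed follows from the acyclicity of the sheaves $F^{p}\mathcal{D}'^{s}$ together with the Dolbeault-type identification $H^{*}(F^{p}\mathcal{D}'^{*}(X))\cong H^{*}(F^{p}\mathcal{E}^{*}(X))$ already invoked in Section 3.
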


\begin{thm}
The morphism of spark complexes $(\pi_p, \pi_p, id): (F^*, E^*, I^*)\rightarrow(F^*_p, E^*_p, I^*_p)$
induces a surjective group homomorphism $$\Pi_p: \hat{\mathbf{H}}^*_{hyperspark}(X)\rightarrow
\hat{\mathbf{H}}^*_{hyperspark}(X,p)$$ whose kernel is an ideal. Hence, $\hat{\mathbf{H}}^*_{hyperspark}(X,p)$
carries a ring structure.
\end{thm}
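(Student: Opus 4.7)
The plan is to follow the template already set by Theorems 3.6 and 4.3, since the \v{C}ech--Dolbeault hyperspark setting is structurally parallel: one has a truncation morphism of spark complexes, and one needs well-definedness, surjectivity, and the ideal property of the kernel. First I would observe that the commutative diagram displayed just before the theorem makes it clear that $(\pi_p,\pi_p,\mathrm{id})$ intertwines the differentials $D$ and $D_p$, so the rule $(a,r)\mapsto(\pi_p(a),r)$ sends sparks to sparks and equivalences to equivalences, giving a well-defined homomorphism $\Pi_p$.

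Next, for surjectivity, I would lift a given hyperspark $(A,r)$ of level $p$ (with $D_pA=e-\Psi_p r$) to a hyperspark in the full hyperspark complex. Since $r\in\bigoplus C^r(\mathcal{U},\mathcal{IF}^s)$ is $D$-closed, its image in $H^{k+1}(F^*)\cong H^{k+1}(E^*)$ can be represented by a smooth form $e_0$, so pick $a_0\in F^k$ with $Da_0=e_0-\Psi(r)$. Then $D_p(A-\pi_p a_0)=e-\pi_p e_0$ is a smooth form, and the Harvey--Lawson lemma \cite[Lemma 1.5]{HL1}, applied to the quasi-isomorphism $\mathcal{E}^*(X,p)\hookrightarrow\bigoplus C^r(\mathcal{U},\mathcal{D}'^s_p)$, yields $b\in F^{k-1}_p$ and $f\in E^k_p$ with $A-\pi_p a_0=f+D_pb$. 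Setting $a=a_0+f+Db$, one checks $Da=(e_0+df)-\Psi(r)$ so $(a,r)$ is a hyperspark in the full complex, and $\pi_p a=A$ by construction.

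For the kernel, I would first invoke Lemma 5.3 to represent any class in $\ker\Pi_p$ by a pair $(\bar a,0)$ with $\bar a\in\mathcal{E}^k(X)\subset C^0(\mathcal U,\mathcal{D}'^k)$ and $\pi_p(\bar a)=0$. The argument is the same mimetic correction as in Lemma 3.7: given $(a,r)\in\alpha$ with $\Pi_p(\alpha)=0$, use the equivalence data $(b,s)$ in the level-$p$ complex to replace $(a,r)$ by $(a-Db-\Psi(s),\,r+Ds)$, and then use \cite[Lemma 1.5]{HL1} together with the filtered quasi-isomorphism $H^*(\bigoplus C^r(\mathcal{U},F^p\mathcal{D}'^s))\cong H^*(F^p\mathcal{E}^*(X))$ to promote the first component to a smooth global form.

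Finally, to verify that $\ker\Pi_p$ is an ideal, I would apply the product formula for $\hat{\mathbf{H}}^*_{hyperspark}(X)$ (the hyperspark analogue of the formula used in the proofs of Theorems 3.6 and 4.3): if $\alpha$ is represented by $(\bar a,0)$ with $\bar a$ smooth and $\pi_p(\bar a)=0$, and $\beta$ by $(b,s)$ with $Db=f-\Psi(s)$, then $\alpha\cdot\beta$ is represented by a pair whose first component is $\bar a\cup f+(-1)^{\deg\bar a+1}0\cup b=\bar a\cup f$ and whose integral component is $0\cup s=0$, so $\pi_p(\bar a\cup f)=0$ and $\alpha\cdot\beta\in\ker\Pi_p$. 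The main obstacle will really be the surjectivity step, because it requires the correct choice of filtered subcomplex in which to apply \cite[Lemma 1.5]{HL1}; everything else is a transcription of the arguments in Sections 3 and 4.
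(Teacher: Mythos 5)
Your proposal is correct and is essentially the proof the paper intends: the paper omits the argument for this theorem precisely because it is ``similar to the last two sections,'' and your well-definedness, surjectivity-via-\cite[Lemma 1.5]{HL1}, and kernel steps are faithful transcriptions of the proofs of Theorems 3.6 and 4.3. The only caveat is in the final ideal step: the paper never defines an intrinsic cup product on the hyperspark complex (wedge products of \v{C}ech cochains of currents are not generally defined), so the multiplication of $[(\bar a,0)]$ by $\beta$ should be routed through the isomorphism with $\hat{\mathbf{H}}^*_{spark}(X)$ or $\hat{\mathbf{H}}^*_{smooth}(X)$ --- which is harmless here since $\bar a$ is a global smooth form and the computation then reduces to the one already done in Theorem 3.6.
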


Harvey and Lawson showed
\begin{thm}\cite{HL1}
Both the de Rham-Federer spark complex and the smooth hyperspark complex
are quasi-isomorphic to the hyperspark complex. Hence,
$$\hat{\mathbf{H}}^*_{spark}(X)\cong\hat{\mathbf{H}}^*_{hyperspark}(X)
\cong\hat{\mathbf{H}}^*_{smooth}(X).$$
\end{thm}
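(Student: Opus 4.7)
The plan is to invoke Proposition~2.10: it suffices to construct quasi-isomorphisms of spark complexes, in the sense of Definition~2.9, from the de~Rham--Federer complex to the hyperspark complex and from the smooth hyperspark complex to the hyperspark complex. Fix a good cover $\mathcal{U}$ of $X$ so that all three complexes live over the same combinatorial data, and take the hyperspark complex $(\bigoplus_{r+s=*}C^r(\mathcal{U},\mathcal{D}'^s),\,\mathcal{E}^*(X),\,\bigoplus_{r+s=*}C^r(\mathcal{U},\mathcal{IF}^s))$ as the common target.

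For the first quasi-isomorphism, I would send a current $a\in\mathcal{D}'^*(X)$ to the constant \v{C}ech $0$-cochain it defines in $C^0(\mathcal{U},\mathcal{D}'^*) \subset \bigoplus_{r+s=*}C^r(\mathcal{U},\mathcal{D}'^s)$, and similarly embed $\mathcal{IF}^*(X)\hookrightarrow C^0(\mathcal{U},\mathcal{IF}^*)$, while keeping $\mathcal{E}^*(X)$ fixed by the identity. Commutativity of the diagram in Definition~2.9 is immediate because each arrow is an inclusion of global sections. For the second quasi-isomorphism, I would use the inclusion $\bigoplus_{r+s=*}C^r(\mathcal{U},\mathcal{E}^s)\hookrightarrow \bigoplus_{r+s=*}C^r(\mathcal{U},\mathcal{D}'^s)$ on $F^*$, the identity on $E^*$, and on $I^*$ the map that sends an integer \v{C}ech cochain in $C^*(\mathcal{U},\mathbb{Z})$ to the same data viewed as integrally flat $0$-currents, i.e.\ placed in the row $s=0$ of $\bigoplus_{r+s=*}C^r(\mathcal{U},\mathcal{IF}^s)$. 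Again the commuting square is tautological.

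The real content is to verify that the two $I^*$-level maps induce isomorphisms on cohomology. Everything is arranged to make all three $I^*$-complexes compute $H^*(X,\mathbb{Z})$, so the strategy is to run the standard two spectral sequences on the double complex $\bigoplus_{r+s=*}C^r(\mathcal{U},\mathcal{IF}^s)$. Filtering by the $\mathcal{IF}$-degree, the rows collapse because $\mathcal{IF}^*$ is a (fine, hence soft) resolution of the constant sheaf $\mathbb{Z}$; the $E_2$-page is then $C^*(\mathcal{U},\mathbb{Z})$, whose cohomology for a good cover is $H^*(X,\mathbb{Z})$, and this identification matches the smooth-hyperspark-side map. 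Filtering the other way, the fineness of each $\mathcal{IF}^s$ kills all higher \v{C}ech degrees and leaves the column of global sections $\mathcal{IF}^*(X)$, whose cohomology is again $H^*(X,\mathbb{Z})$; this matches the de~Rham--Federer-side map. The augmentation arguments make the two edge homomorphisms agree with the inclusions I wrote down, so both $\psi$'s are quasi-isomorphisms.

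The main obstacle I expect is not the diagram-chase but the geometric measure theory input underlying the claim that $\mathcal{IF}^*$ is a fine resolution of $\mathbb{Z}$ (softness and the local Poincar\'e lemma for integrally flat currents); this is the only step that is not formal homological algebra, and I would cite the relevant statements from \cite{HL1} and the Federer theory rather than reprove them. Once this is granted, Proposition~2.10 packages the two quasi-isomorphisms into the triple isomorphism $\hat{\mathbf{H}}^*_{spark}(X)\cong\hat{\mathbf{H}}^*_{hyperspark}(X)\cong\hat{\mathbf{H}}^*_{smooth}(X)$.
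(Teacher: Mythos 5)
Your construction is exactly the one the paper uses (for the level-$p$ analogue proved immediately after this theorem, and in \cite{HL1} itself): the two morphisms of spark complexes given by including global sections as \v{C}ech $0$-cochains on the de Rham--Federer side and including $C^*(\mathcal{U},\mathbb{Z})$ into the bottom row of $\bigoplus_{r+s=*}C^r(\mathcal{U},\mathcal{IF}^s)$ on the smooth-hyperspark side, with the quasi-isomorphism property on $I^*$ checked via the two spectral sequences of that double complex. One small correction: $\mathcal{IF}^s$ is not a fine sheaf (multiplying by a bump function destroys integer multiplicities), so the acyclicity you need is softness, which Harvey--Lawson prove directly, together with the local exactness of $\mathcal{IF}^*$ as a resolution of $\mathbb{Z}$ on the contractible intersections of the good cover.
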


Similarly, we establish relations among the Dolbeault-Federer spark complex, the \v{C}ech-Dolbeault
spark complex and the \v{C}ech-Dolbeault hyperspark complex of level $p$.

\begin{thm}
We have morphisms of spark complexes

\xymatrix{ \{{\text{the de Rham-Federer}\atop \text{spark complex}}\} \ar[r]^{i} \ar[d]^{\pi_p}
 &  \{\text{the hyperspark complex}\}
 \ar[d]^{\pi_p} & \{\text{the smooth hyperspark complex}\} \ar[l]_{i} \ar[d]^{\pi_p} \\
 \{{\text{the Dolbeault-Federer}\atop \text{spark complex of level } p} \} \ar[r]^{ i} &
\{{\text{the \v{C}ech-Dolbeault hyperspark}\atop\text{complex of level }p}\} &
\{{\text{the \v{C}ech-Dolbeault spark}\atop\text{complex of level }p}\} \ar[l]_{i} }

where horizontal morphisms are quasi-isomorphisms.

Hence we get induced homomorphisms

\xymatrix{& & \hat{\mathbf{H}}^*_{spark}(X)
\ar[r]^{=} \ar[d]^{\Pi_p} &
\hat{\mathbf{H}}^*_{hyperspark}(X)
 \ar[d]^{\Pi_p} &
\hat{\mathbf{H}}^*_{smooth}(X)
 \ar[l]_{=} \ar[d]^{\Pi_p} \\
& & \hat{\mathbf{H}}^*_{spark}(X,p)
\ar[r]^{ =} &
\hat{\mathbf{H}}^*_{hyperspark}(X,p)
&
\hat{\mathbf{H}}^*_{smooth}(X,p)
 \ar[l]_{=} }

where the horizontal ones are isomorphism.
\end{thm}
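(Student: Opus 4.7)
The plan is to verify that each horizontal morphism in the bottom row of the first diagram is a quasi-isomorphism of spark complexes in the sense of Definition 2.9, after which Proposition 2.10 immediately yields the isomorphisms of spark class groups asserted in the second diagram. The top-row analogue is exactly Theorem 5.6 cited from \cite{HL1}, and the level-$p$ version runs in parallel: the truncation $\pi_p$ is essentially orthogonal to the sheaf-theoretic content of the Harvey--Lawson argument.

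First I would spell out the morphisms. In both bottom horizontal maps the $E^*$-component is the identity on $\mathcal{E}^*(X,p)$. The $F^*$-components are the natural inclusions $\mathcal{D}'^*(X,p)\hookrightarrow \bigoplus_{r+s=*}C^r(\mathcal{U},\mathcal{D}'^s_p)$ and $\bigoplus_{r+s=*}C^r(\mathcal{U},\mathcal{E}^s_p)\hookrightarrow \bigoplus_{r+s=*}C^r(\mathcal{U},\mathcal{D}'^s_p)$, placed as the $r=0$ global-section column in the first case. The $I^*$-components are $\mathcal{IF}^*(X)\hookrightarrow \bigoplus_{r+s=*}C^r(\mathcal{U},\mathcal{IF}^s)$ (again as the $r=0$ column) and $C^*(\mathcal{U},\mathbb{Z})\hookrightarrow \bigoplus_{r+s=*}C^r(\mathcal{U},\mathcal{IF}^s)$ (via $\mathbb{Z}\hookrightarrow \mathcal{IF}^0$, as the $s=0$ row). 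That these really are cochain maps compatible with the structure maps $\Psi_p$ required by Definition 2.9 is a direct check, because $\pi_p$ commutes with $d$, with the sheaf inclusion $\mathcal{E}\hookrightarrow\mathcal{D}'$ and with the Čech coboundary $\delta$.

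The substantive step is to prove that the two $I^*$-maps are quasi-isomorphisms. Here I would use that $\mathcal{IF}^*$ is a fine (soft) resolution of the constant sheaf $\underline{\mathbb{Z}}_X$, a classical fact from geometric measure theory already invoked throughout \cite{HL1}\cite{HL2}, together with the assumption that $\mathcal{U}$ is a good cover. Analyzing the double complex $C^r(\mathcal{U},\mathcal{IF}^s)$ by its two filtrations: computing $d$-cohomology first collapses it onto $C^*(\mathcal{U},\mathbb{Z})$ by the resolution property, while computing $\delta$-cohomology first collapses it onto $\mathcal{IF}^*(X)$ by softness of $\mathcal{IF}^s$. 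Both spectral sequences degenerate at $E_2$ and yield $H^*(X,\mathbb{Z})$, and tracing representatives shows that the two inclusions written above are precisely the augmentations realizing these quasi-isomorphisms.

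Once the bottom row consists of quasi-isomorphisms of spark complexes, Proposition 2.10 gives the isomorphisms $\hat{\mathbf{H}}^*_{spark}(X,p)\cong \hat{\mathbf{H}}^*_{hyperspark}(X,p)\cong \hat{\mathbf{H}}^*_{smooth}(X,p)$. Commutativity of the second diagram with the vertical arrows $\Pi_p$ is automatic from naturality: each $\Pi_p$ is induced by the same morphism $(\pi_p,\pi_p,\mathrm{id})$ and fits into a manifest commutative prism with the horizontal inclusions $i$ of Sections 3--5. The main potential obstacle is the verification of softness/resolution properties of $\mathcal{IF}^*$ with integer coefficients; this is handled by the Federer--Harvey--Lawson machinery, and the compatibility of $\pi_p$ with $\Psi_p$ (needed so that the morphism really lands in the truncated complex) is already contained in Appendix B of \cite{HL2}.
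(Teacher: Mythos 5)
Your proposal is correct and follows essentially the same route as the paper: exhibit the two morphisms of spark complexes into the \v{C}ech--Dolbeault hyperspark complex of level $p$, check that the $I^*$-components $\mathcal{IF}^*(X)\to\bigoplus_{r+s=*}C^r(\mathcal{U},\mathcal{IF}^s)$ and $C^*(\mathcal{U},\mathbb{Z})\to\bigoplus_{r+s=*}C^r(\mathcal{U},\mathcal{IF}^s)$ are quasi-isomorphisms, and invoke Proposition 2.10. The paper merely asserts these two quasi-isomorphisms, whereas you supply the standard justification (softness of $\mathcal{IF}^*$ as a resolution of $\underline{\mathbb{Z}}$ plus the good-cover double-complex collapse), so your write-up is the same argument with the implicit step made explicit.
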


\begin{proof}
It is easy to see we have the following two commutative diagrams

\xymatrix{ & \mathcal{IF}^*(X)
 \ar[r]^{\Psi_p} \ar[d]^{i} &\mathcal{D}'^*(X,p)
 \ar [d]^i
& \mathcal{E}^*(X,p)  \ar[l]_i \ar @{=}[d] \\
& \bigoplus_{r+s=*}C^r(\mathcal{U},\mathcal{IF}^s) \ar[r]^{\Psi_p}
& \bigoplus_{r+s=*}C^r(\mathcal{U},\mathcal{D}'^s_p)
 & \mathcal{E}^*(X,p) \ar[l]_{\qquad i}\\}

 and

\xymatrix{ & C^*(\mathcal{U},\mathbb{Z})
 \ar[r]^{i} \ar[d]^{i} &
\bigoplus_{r+s=*}C^r(\mathcal{U},\mathcal{E}_p^s) \ar [d]^i
& \mathcal{E}^*(X,p)  \ar [l]_{\qquad i} \ar @{=}[d] \\
& \bigoplus_{r+s=*}C^r(\mathcal{U},\mathcal{IF}^s) \ar[r]^{\Psi_p}
& \bigoplus_{r+s=*}C^r(\mathcal{U},\mathcal{D}'^s_p)
 & \mathcal{E}^*(X,p) \ar [l]_{\qquad i}\\}

where $$i:\mathcal{IF}^*(X)\longrightarrow\bigoplus_{r+s=*}C^r(\mathcal{U},\mathcal{IF}^s) \quad
\text{ and } \quad
i:C^*(\mathcal{U},\mathbb{Z})\longrightarrow\bigoplus_{r+s=*}C^r(\mathcal{U},\mathcal{IF}^s)$$
are quasi-isomorphisms of cochain complexes.

\end{proof}

So far, we have introduced three spark complexes associated to a complex manifold $X$, and showed
the natural isomorphisms between the groups of spark classes associated to them.
We denote the groups of spark classes by $\hat{\mathbf{H}}^*(X,p)$ collectively, and call them
the \textbf{Harvey-Lawson spark characters of level p associated to} $X$. The ring structure
of $\hat{\mathbf{H}}^*(X,p)$ is induced from the ring structure of $\hat{\mathbf{H}}^*(X)$.
There are two different ways to define the product in $\hat{\mathbf{H}}^*(X)$.
Harvey, Lawson and Zweck \cite{HLZ} defined the product via the de Rham-Federer spark complex.
And the author defined the product via the smooth hyperspark complex in \cite{H1} and
showed two ring structures are equivalent. In the next section, we shall define the product for
Deligne cohomology by both theories.

\section{The Ring Structure on Deligne Cohomology with Analytic Formula}
Deligne cohomology was invented by Deligne in 1970's. In \cite{B}, Beilinson defined the
ring structure on Deligne cohomology. In 1995, Deninger \cite{D} defined higher operations
--- Massey products in Deligne cohomology. In this section, we shall give a product formula for
Deligne cohomology via spark theory. A construction of Massey products in Deligne cohomology
will appear in \cite{H2}.

We follow \cite{EV} to define Deligne cohomology and its ring structure.

\begin{defn}
Let $X$ be a complex manifold. For $p\geq 0$, the Deligne complex
$\mathbb{Z}_{\mathcal{D}}(p)$ is the complex of sheaves:
$$0\rightarrow\mathbb{Z}\stackrel{i}\rightarrow\Omega^0\stackrel{d}\rightarrow\Omega^1
\stackrel{d}\rightarrow\cdots\stackrel{d}\rightarrow\Omega^{p-1}\rightarrow 0$$
where $\Omega^k$ denotes the sheaf of holomorphic $k$-forms on $X$.
The hypercohomology groups $\mathbb{H}^q(X,\mathbb{Z}_{\mathcal{D}}(p))$ are called
the Deligne cohomology groups of $X$, and are denoted by $H^q_{\mathcal{D}}(X,\mathbb{Z}(p))$.
\end{defn}

\begin{rmk}
In Deligne complex $\mathbb{Z}_{\mathcal{D}}(p)$, we always consider that $\mathbb{Z}$ is of degree $0$, and
$\Omega^k$ is of degree $k+1$.
\end{rmk}

\begin{ex}
It is easy to see $H^q_{\mathcal{D}}(X,\mathbb{Z}(0))=H^q(X,\mathbb{Z})$ and
$H^q_{\mathcal{D}}(X,\mathbb{Z}(1))=H^{q-1}(X,\mathcal{O}^*)$.
\end{ex}

In \cite{B}, Beilinson defined a cup product
$$\cup: \mathbb{Z}_{\mathcal{D}}(p)\otimes \mathbb{Z}_{\mathcal{D}}(p')
\rightarrow \mathbb{Z}_{\mathcal{D}}(p+p')$$ by
$$x\cup y=\left\{
    \begin{array}{ll}
      x\cdot y & \hbox{if } \deg x=0;\\
      x\wedge dy & \hbox{if } \deg x>0 \hbox{ and } \deg y=p';\\
      0 & \hbox{otherwise.}
    \end{array}
  \right.$$
The cup product $\cup$ is a morphism of complexes and associative \cite{EV} \cite{Br},
hence induces a ring structure on
$$\bigoplus_{p,q}H^q_{\mathcal{D}}(X,\mathbb{Z}(p)).$$

We are identifying the Deligne cohomology groups with subgroups of the groups of $\bar{d}$-spark classes.
Then we give a product formula for Deligne cohomology.

\begin{lem}
We have the short exact sequence $$0 \rightarrow H^k_{\mathcal{D}}(X,\mathbb{Z}(p)) \rightarrow
\hat{\mathbf{H}}^{k-1}_{spark}(X,p) \rightarrow \mathcal{Z}^k_{\mathbb{Z}}(X,p) \rightarrow 0$$
which is the middle row of $3\times 3$ diagram for the group of $\bar{d}$-spark classes of level $p$.
Hence, for any Deligne class $\alpha \in H^k_{\mathcal{D}}(X,\mathbb{Z}(p))$, there exists a spark representative
$$(a,r)\in \mathcal{D}'^{k-1}(X,p) \oplus \mathcal{IF}^k(X) \quad\text{with}\quad d_p a=-\Psi_p(r),\quad dr=0.$$
\end{lem}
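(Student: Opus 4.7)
The plan is to specialize Proposition 2.8 to the Dolbeault-Federer spark complex of level $p$. That proposition already produces the short exact sequence
\[
0 \to H^{k-1}(G^*_p) \to \hat{\mathbf{H}}^{k-1}(X,p) \stackrel{\delta_1}\to Z^k_I(E^*_p) \to 0,
\]
with $G^k_p = F^k_p \oplus I^{k+1}_p$ and cone differential $D(a,r) = (d_p a + \Psi_p r, -dr)$. The set $Z^k_I(E^*_p)$ is tautologically $\mathcal{Z}^k_{\mathbb{Z}}(X,p)$, since both are defined as the $d_p$-closed forms in $\mathcal{E}^k(X,p)$ whose class lies in $\mathrm{image}(\Psi_{p*}) = H^k_{\mathbb{Z}}(X,p)$. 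The substantive job, therefore, is to identify $H^{k-1}(G^*_p)$ with $H^k_{\mathcal{D}}(X,\mathbb{Z}(p))$.

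This identification uses the fact that $G^*_p$ is, by construction, the cone complex of $\Psi_p: \mathcal{IF}^*(X) \to \mathcal{D}'^*(X,p)$. I would point out that $\mathcal{IF}^*$ is a fine resolution of the constant sheaf $\mathbb{Z}$ and that $\mathcal{D}'^*_p$ is a fine resolution of $\Omega^{*<p}$; the latter is already implicit in the proof that $(F^*_p,E^*_p,I^*_p)$ is a spark complex, where $H^*(F^*_p)\cong\mathbb{H}^*(\Omega^{*<p}) = H^*(X,p)$ was verified. Since $\Psi_p$ covers the sheaf-level inclusion $\mathbb{Z}\hookrightarrow\Omega^{*<p}$, taking global sections gives a quasi-isomorphism of $G^*_p$ with a complex computing $\mathbb{H}^*(X,\mathrm{Cone}(\mathbb{Z}\to\Omega^{*<p}))$. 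On the Deligne side, the defining short exact sequence $0\to\Omega^{*<p}[-1]\to\mathbb{Z}_{\mathcal{D}}(p)\to\mathbb{Z}\to 0$ of sheaf complexes exhibits $\mathbb{Z}_{\mathcal{D}}(p)[1]$ as quasi-isomorphic to $\mathrm{Cone}(\mathbb{Z}\to\Omega^{*<p})$, hence $H^k_{\mathcal{D}}(X,\mathbb{Z}(p)) = \mathbb{H}^{k-1}(X,\mathrm{Cone}(\mathbb{Z}\to\Omega^{*<p}))$, which matches $H^{k-1}(G^*_p)$.

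For readers who prefer to avoid the derived-category language, the same conclusion follows from the five lemma: both groups sit in natural short exact sequences with identical outer terms, namely $\mathrm{coker}\bigl(\Psi_{p*}: H^{k-1}(X,\mathbb{Z})\to H^{k-1}(X,p)\bigr)$ on the left and $\ker\bigl(\Psi_{p*}: H^k(X,\mathbb{Z})\to H^k(X,p)\bigr)$ on the right, arising respectively from the cone long exact sequence of $G^*_p$ and from the SES defining the Deligne complex. The main obstacle is clerical: reconciling sign and degree conventions so that the comparison square of connecting maps commutes. Once this identification is in hand, the last sentence of the lemma is immediate: an $\alpha\in H^k_{\mathcal{D}}(X,\mathbb{Z}(p))$ corresponds to a spark class in $\ker\delta_1$, whose defining spark equation $d_p a = e-\Psi_p r$ has $e = 0$, yielding the asserted representative $(a,r)\in\mathcal{D}'^{k-1}(X,p)\oplus\mathcal{IF}^k(X)$ with $d_p a = -\Psi_p r$ and $dr=0$.
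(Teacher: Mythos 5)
Your proposal is correct and follows essentially the same route as the paper: apply Proposition 2.8, observe that $G^*_p$ is the cone of $\Psi_p:\mathcal{IF}^*(X)\to\mathcal{D}'^*(X,p)$, use the acyclic resolutions $\mathbb{Z}\to\mathcal{IF}^*$ and $\Omega^j\to\mathcal{D}'^{j,*}$ to identify $H^{k-1}(G^*_p)$ with $\mathbb{H}^{k-1}(\mathrm{Cone}(\mathbb{Z}\to\Omega^{*<p}))\cong H^k_{\mathcal{D}}(X,\mathbb{Z}(p))$, and then get the representative with $e=0$ from $\alpha\in\ker\delta_1$. The five-lemma variant you sketch is an optional alternative, but the core argument coincides with the paper's.
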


\begin{proof}

By Proposition 2.8, it suffices to show
$$H^k_{\mathcal{D}}(X,\mathbb{Z}(p)) \cong
H^{k-1}(\text{Cone}(\Psi: \mathcal{IF}^*(X) \rightarrow \mathcal{D}'^*(X,p))).$$

By definition,  $H^*_{\mathcal{D}}(X,\mathbb{Z}(p))$ is the hypercohomology of
the complex of sheaves $$0\rightarrow \mathbb{Z}\rightarrow \Omega^0
\rightarrow \Omega^1 \rightarrow \ldots \rightarrow \Omega^{p-1} \rightarrow 0.$$
In other words, it is the hypercohomology of the Cone
$(\mathbb{Z} \rightarrow \Omega^{*<p})[-1]$.

Consider the acyclic resolutions:
$$\mathbb{Z}\rightarrow \mathcal{IF}^* \quad\text{and}\quad
\Omega^k\rightarrow \mathcal{D}'^{k,*}.$$

And we have quasi-isomorphism of complexes of sheaves
$$ \text{Cone}(\mathbb{Z} \rightarrow \Omega^{*<p})
\simeq\text{Cone}(\Psi: \mathcal{IF}^* \rightarrow \bigoplus_{s+t=*,s<p}\mathcal{D}'^{s,t}),$$
and hence
$$H^k_{\mathcal{D}}(X,\mathbb{Z}(p)) \cong
 \mathbb{H}^{k-1}(\text{Cone}(\mathbb{Z} \rightarrow \Omega^{*<p})) \cong
\mathbb{H}^{k-1}(\text{Cone}(\Psi: \mathcal{IF}^* \rightarrow \bigoplus_{s+t=*,s<p}\mathcal{D}'^{s,t}))$$
$$\cong H^{k-1}(\text{Cone}(\Psi: \mathcal{IF}^*(X) \rightarrow \mathcal{D}'^*(X,p))).$$

Then for any Deligne class $\alpha \in H^k_{\mathcal{D}}(X,\mathbb{Z}(p)) \subset
\hat{\mathbf{H}}^{k-1}(X,p)$, we can find a representative
$(a,r)\in \mathcal{D}'^{k-1}(X,p) \oplus \mathcal{I}^k(X)$ with $d_p a=e-\Psi_p(r)$, $dr=0$.
And we have $e=0$ since $\alpha \in \ker{\delta_1}$.

\end{proof}

Applying the representation of Deligne cohomology classes in terms of currents above, we
define a product in Deligne cohomology
$$H^k_{\mathcal{D}}(X,\mathbb{Z}(p))\otimes H^l_{\mathcal{D}}(X,\mathbb{Z}(q))
\longrightarrow H^{k+l}_{\mathcal{D}}(X,\mathbb{Z}(p+q)).$$

First, for any Deligne class $\alpha \in H^k_{\mathcal{D}}(X,\mathbb{Z}(p))$, we choose a spark representative
$$(a,r)\in \mathcal{D}'^{k-1}(X,p) \oplus \mathcal{IF}^k(X)\quad\text{with}\quad d_p a=-\Psi_p(r),\quad dr=0.$$
Similarly, for any $\beta  \in H^l_{\mathcal{D}}(X,\mathbb{Z}(q))$, we choose a spark representative
$$(b,s)\in \mathcal{D}'^{l-1}(X,q) \oplus \mathcal{IF}^l(X)\quad\text{with}\quad d_q b=-\Psi_q(s),\quad ds=0.$$

Since $\Pi_p: \hat{\mathbf{H}}^*(X) \rightarrow \hat{\mathbf{H}}^*(X,p)$ is surjective, there exist
$$(\tilde{a},r) \in \mathcal{D}'^{k-1}(X) \oplus \mathcal{IF}^k(X) \quad\text{with}\quad
\Pi_p[(\tilde{a},r)]=[(\pi_p(\tilde{a}),r)]=[(a,r)]=\alpha,$$
and
$$(\tilde{b},s) \in \mathcal{D}'^{l-1}(X) \oplus \mathcal{IF}^l(X) \quad\text{
with}\quad \Pi_q[(\tilde{b},s)]=[(\pi_q(\tilde{b}),s)]=[(b,s)]=\beta.$$

Write the spark equations for $\tilde{a}$ and $\tilde{b}$ as
$$d\tilde{a}=e-r\quad \text{and}\quad d\tilde{b}=f-s,$$ where $\pi_p\tilde{a}=a$, $\pi_pe=0$ and
$\pi_q\tilde{b}=b$, $\pi_qf=0$.

By the product formula \cite[Theorem 3.5]{HL1}
$$[(\tilde{a},r)]*[(\tilde{b},s)]=[(\tilde{a}\wedge f+(-1)^k r\wedge \tilde{b},r \wedge s)]
=[(\tilde{a} \wedge s +(-1)^ke\wedge \tilde{b},r\wedge s)].$$
Since $$d(\tilde{a}\wedge f+(-1)^k r\wedge \tilde{b})=e\wedge f-r\wedge s \quad\text{
and}\quad \pi_{p+q}(e\wedge f)=0,$$
we get $\Pi_{p+q}[(\tilde{a}\wedge f+(-1)^k r\wedge \tilde{b},
r\wedge s)] \in H^{k+l}_\mathcal{D}(X,\mathbb{Z}(p+q))$, and define it as $\alpha*\beta$.

Now we show the product is well-defined, i.e. it is independent of the choices of representatives
$\alpha$ and $\beta$. If we have another representative $(a',r')\in \alpha$ and a lift
$(\tilde{a}',r')$ with $\Pi_p[(\tilde{a}',r')]=[(a',r')]=\alpha$, then
$[(\tilde{a},r)-(\tilde{a}',r')] \in \ker \Pi_p$. By Lemma 3.7, there exists a representative
of spark class $[(\tilde{a},r)-(\tilde{a}',r')]$, which is of form $(c,0)$ where $c$ is smooth
and $\pi_p(c)=0$.
Then we have

\begin{eqnarray*}
& &\Pi_{p+q}([(\tilde{a},r)]*[(\tilde{b},s)]-[(\tilde{a}',r')]*[(\tilde{b},s)])\\
&=&\Pi_{p+q}([(\tilde{a},r)-(\tilde{a}',r')]*[(\tilde{b},s)])\\
&=&\Pi_{p+q}([(c,0)]*[(\tilde{b},s)])\\
&=&\Pi_{p+q}([(c\wedge f+(-1)^{k}0\wedge \tilde{b},0)])\\
&=&0
               \end{eqnarray*}
Similarly, we can show the product does not depend on representatives of $\beta$ either.

\begin{rmk}
In the process above, we can always choose good representatives
$a$, $\tilde{a}$, $b$, $\tilde{b}$, $r$ and $s$ in sense of \cite[Proposition 3.1]{HL1}
such that all wedge products are well defined.
\end{rmk}

\begin{thm} \emph{Product formula of Deligne cohomology I} \\
For any Deligne class $\alpha \in H^k_{\mathcal{D}}(X,\mathbb{Z}(p))$ and
$\beta  \in H^l_{\mathcal{D}}(X,\mathbb{Z}(q))$, there exist spark representations
$(a,r)$ for $\alpha$ and $(b,s)$ for $\beta$ as above. Let $(\tilde{a},r)$ and
$(\tilde{b},s)$ be de Rham-Federer sparks which are lifts of $(a,r)$ and $(b,s)$.
Then
$$\alpha * \beta=\Pi_{p+q}[(\tilde{a} \wedge s +(-1)^ke\wedge \tilde{b},r\wedge s)]=
[(\pi_{p+q}(\tilde{a} \wedge s +(-1)^ke\wedge b),r\wedge s)]\in
H^{k+l}_{\mathcal{D}}(X,\mathbb{Z}(p+q)).$$
\end{thm}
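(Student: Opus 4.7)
The plan is to verify the two equalities in the statement in turn. The first equality, $\alpha*\beta = \Pi_{p+q}[(\tilde{a}\wedge s + (-1)^k e\wedge \tilde{b}, r\wedge s)]$, is essentially already established in the construction preceding the theorem: one applies Harvey-Lawson's product formula \cite[Theorem 3.5]{HL1} to the de Rham-Federer lifts $(\tilde{a},r)$ and $(\tilde{b},s)$, uses the alternative expression of the representative, and then descends to $\hat{\mathbf{H}}^{k+l-1}(X,p+q)$ via $\Pi_{p+q}$. So the proof really reduces to two clean checks.

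First, I will confirm that the two de Rham-Federer sparks $(\tilde{a}\wedge f + (-1)^k r\wedge \tilde{b}, r\wedge s)$ and $(\tilde{a}\wedge s + (-1)^k e\wedge \tilde{b}, r\wedge s)$ are equivalent. Subtracting their first components and substituting the spark equations $d\tilde{a}=e-r$ and $d\tilde{b}=f-s$ gives
$$\tilde{a}\wedge(f-s) + (-1)^k(r-e)\wedge \tilde{b} = \tilde{a}\wedge d\tilde{b} - (-1)^k\, d\tilde{a}\wedge \tilde{b},$$
and a short Leibniz computation identifies this with $(-1)^{k-1}d(\tilde{a}\wedge \tilde{b})$, an exact current. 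This furnishes the required spark equivalence, with $(b,s) = ((-1)^{k-1}\tilde{a}\wedge\tilde{b},0)$.

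Second, I need to justify pushing $\pi_{p+q}$ inside the bracket and replacing $\tilde{b}$ by $b=\pi_q(\tilde{b})$ in the term $e\wedge\tilde{b}$. Since $\Pi_{p+q}$ is induced by $(u,v)\mapsto(\pi_{p+q}(u),v)$, the first operation is immediate. The replacement rests on a bidegree observation: because $\alpha\in\ker\delta_1$ one has $\pi_p e = 0$, so $e$ has only components in $\mathcal{E}^{i,j}$ with $i\geq p$; and by definition $\tilde{b}-b$ has only components with holomorphic degree $\geq q$. Hence $e\wedge(\tilde{b}-b)$ has holomorphic degree $\geq p+q$ and is annihilated by $\pi_{p+q}$. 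Note that the analogous move is \emph{not} available for $\tilde{a}\wedge s$, since $s$ has no well-defined pure holomorphic degree; this is precisely why the theorem keeps $\tilde{a}$ (not $a$) in that term.

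I do not foresee a serious obstacle; the argument is mostly bookkeeping. The two points that demand attention are the sign-tracking in the Leibniz step and the implicit good-representative hypothesis of \cite[Proposition 3.1]{HL1} (recalled in the preceding remark), which must be in force so that all wedge products of currents are defined and so that the product $(-1)^{k-1}\tilde{a}\wedge\tilde{b}$ used to witness the equivalence above is meaningful.
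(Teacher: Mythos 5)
Your proposal is correct and follows essentially the same route as the paper: lift to de Rham--Federer sparks, apply the Harvey--Lawson product formula, and project by $\Pi_{p+q}$, with well-definedness coming from the construction preceding the theorem. The only difference is that you explicitly verify (via the Leibniz identity, with the correct sign $(-1)^{k-1}d(\tilde{a}\wedge\tilde{b})$) the equivalence of the two spark representatives that the paper simply cites from \cite[Theorem 3.5]{HL1}, and you supply the bidegree argument for replacing $\tilde{b}$ by $b$ under $\pi_{p+q}$, a step the paper's proof leaves implicit.
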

\begin{proof}
We have shown the product is well defined. In Theorem 6.11, we shall verify that
this product is equivalent to Beilinson's definition.
\end{proof}

\begin{rmk}
Suppose $X$ is a algebraic manifold and $CH^*(X)$ is the Chow ring of $X$. Considering
every nonsingular subvariety as a integrally flat current, we can define
the group homomorphism $$\psi: CH^p(X)\rightarrow H^{2p}_{\mathcal{D}}(X,\mathbb{Z}(p)).$$
By our product formula, it is quite easy to see this map induces a ring homomorphism, i.e.
the ring structure of Deligne cohomology is compatible with the ring structure of the Chow ring.
We shall explain this in the next section.
\end{rmk}

Now we rewrite last theorem in terms of the \v{C}ech-Dolbeault spark complex and
give a similar formula. Then we show this product is equivalent to the product defined by
Beilinson in Theorem 6.11.

\begin{lem}
We have the short exact sequence $$0 \rightarrow H^k_{\mathcal{D}}(X,\mathbb{Z}(p)) \rightarrow
\hat{\mathbf{H}}^{k-1}_{smooth}(X,p) \rightarrow \mathcal{Z}^k_{\mathbb{Z}}(X,p) \rightarrow 0.$$
For any Deligne class $\alpha \in H^k_{\mathcal{D}}(X,\mathbb{Z}(p))$, there exists a representative
$$(a,r)\in \bigoplus_{r+s=k-1}C^r(\mathcal{U},\mathcal{E}_p^s)\oplus C^k(\mathcal{U},\mathbb{Z})
\quad\text{with}\quad D_p a=-r,\quad \delta r=0.$$
\end{lem}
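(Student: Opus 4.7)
The plan is to mirror the strategy of Lemma 6.5, but transported to the \v{C}ech-Dolbeault spark complex of level $p$. First, I would apply Proposition 2.8 to the spark complex $(F^*_p, E^*_p, I^*_p) = (\bigoplus_{r+s=*} C^r(\mathcal{U}, \mathcal{E}_p^s),\ \mathcal{E}^*(X,p),\ C^*(\mathcal{U},\mathbb{Z}))$ introduced in Section 4. This immediately yields the middle row
\[
0 \longrightarrow H^{k-1}(G^*_p) \longrightarrow \hat{\mathbf{H}}^{k-1}_{smooth}(X,p) \xrightarrow{\delta_1} Z^k_I(E^*_p) \longrightarrow 0,
\]
where $G^*_p$ is the cone complex of $\Psi_p : C^*(\mathcal{U}, \mathbb{Z}) \to \bigoplus_{r+s=*} C^r(\mathcal{U}, \mathcal{E}_p^s)$. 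The identification $Z^k_I(E^*_p) = \mathcal{Z}^k_{\mathbb{Z}}(X,p)$ is immediate from the definition: these are the $d_p$-closed forms in $\mathcal{E}^k(X,p)$ whose class in $H^k(X,p)$ lies in the image of $\Psi_{p*} : H^k(X,\mathbb{Z}) \to H^k(X,p)$.

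The nontrivial point is identifying $H^{k-1}(G^*_p)$ with $H^k_{\mathcal{D}}(X,\mathbb{Z}(p))$. There are two routes. The quickest is to invoke Theorem 5.7 and Proposition 2.10: the Dolbeault-Federer spark complex of level $p$ and the \v{C}ech-Dolbeault spark complex of level $p$ are quasi-isomorphic, hence their associated $3\times 3$ grids are isomorphic. Applying this isomorphism to the middle row already established in Lemma 6.5 transfers the identification of the kernel with $H^k_{\mathcal{D}}(X,\mathbb{Z}(p))$ to the \v{C}ech-Dolbeault setting. Alternatively, one can recompute directly: $C^*(\mathcal{U},\mathbb{Z})$ is the \v{C}ech resolution of the constant sheaf $\mathbb{Z}$, and $\bigoplus_{r+s=*} C^r(\mathcal{U},\mathcal{E}_p^s)$ is quasi-isomorphic to $\mathcal{E}^*(X,p)$, which resolves $\Omega^{*<p}$ as complexes of sheaves; consequently the cone complex $G^*_p$ computes the hypercohomology of $\mathrm{Cone}(\mathbb{Z} \to \Omega^{*<p})[-1]$, which by definition is $H^*_{\mathcal{D}}(X,\mathbb{Z}(p))$ with the appropriate degree shift.

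The representation statement then follows formally. Given a class $\alpha \in H^k_{\mathcal{D}}(X,\mathbb{Z}(p)) \subset \hat{\mathbf{H}}^{k-1}_{smooth}(X,p)$, it lies in $\ker \delta_1$, so any \v{C}ech-Dolbeault spark representative $(a,r)$ must satisfy $D_p a = e - r$ with $e = \delta_1(\alpha) = 0$; the cocycle condition $dr = 0$ in the cone complex becomes $\delta r = 0$ in the \v{C}ech cochain complex with $\mathbb{Z}$ coefficients. Thus $D_p a = -r$ and $\delta r = 0$, exactly as stated.

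The main obstacle, and essentially the only genuine content, is the hypercohomology identification $H^{k-1}(G^*_p) \cong H^k_{\mathcal{D}}(X,\mathbb{Z}(p))$. The most efficient way to dispatch it is the quasi-isomorphism with the Dolbeault-Federer complex (Theorem 5.7) together with Proposition 2.10, so the bulk of the work has already been done once Lemma 6.5 is in hand; everything else is bookkeeping about the naive filtration and the \v{C}ech resolution.
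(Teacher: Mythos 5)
Your proposal is correct, and your ``alternative'' route is in fact the one the paper takes: the proof of this lemma identifies $H^{k-1}(G^*_p)$ with $H^k_{\mathcal{D}}(X,\mathbb{Z}(p))$ directly, using the quasi-isomorphisms of complexes of sheaves $\mathbb{Z}\simeq\mathcal{C}^*(\mathcal{U},\mathbb{Z})$ and $\Omega^{*<p}\simeq\mathcal{E}^*_p\simeq\bigoplus_{r+s=*}\mathcal{C}^r(\mathcal{U},\mathcal{E}^s_p)$, so that the cone over $C^*(\mathcal{U},\mathbb{Z})\rightarrow\bigoplus_{r+s=*}C^r(\mathcal{U},\mathcal{E}^s_p)$ computes $\mathbb{H}^{k-1}(\mathrm{Cone}(\mathbb{Z}\rightarrow\Omega^{*<p}))$; the remaining steps (Proposition 2.8, the identification of $Z_I^k(E^*_p)$ with $\mathcal{Z}^k_{\mathbb{Z}}(X,p)$, and $e=\delta_1(\alpha)=0$ forcing $D_pa=-r$) are exactly as you describe. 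Your preferred first route --- transferring the middle row from the Dolbeault--Federer version of the lemma via the quasi-isomorphisms of spark complexes established at the end of Section 5 together with Proposition 2.10 --- is also valid and is the more economical deduction once the Dolbeault--Federer case is in hand, since quasi-isomorphic spark complexes have isomorphic $3\times 3$ grids. What the direct computation buys, and the likely reason the paper repeats it rather than quoting the earlier lemma, is a concrete cochain-level description of the isomorphism: in Step 1 of the comparison with Beilinson's product the paper constructs an explicit map $\varphi_p$ from \v{C}ech--Deligne cocycles $\tilde{a}=r+a$ to smooth hypersparks $(a,(-1)^kr)$ and invokes precisely this lemma to conclude that $\varphi_p$ is an isomorphism onto $\ker\delta_1$. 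If you take the abstract shortcut here, you would still owe that explicit comparison later, so the bookkeeping you defer does eventually have to be done.
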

\begin{proof}
Note that we use $(a,r)$ to represent a \v{C}ech-Dolbeault spark here although we can omit $r$.
The reason is that we can make the proof of Theorem 6.11 clearer with this representation.

Applying the following quasi-isomorphisms of complexes of sheaves:
$$\mathbb{Z}\simeq\mathcal{C}^*(\mathcal{U},\mathbb{Z}) \quad \text{ and } \quad
\Omega^{*<p}\simeq\mathcal{E}^*_p\simeq\bigoplus_{r+s=*}\mathcal{C}^r(\mathcal{U},\mathcal{E}^s_p),$$
we get
$$H^k_{\mathcal{D}}(X,\mathbb{Z}(p)) \cong
 \mathbb{H}^{k-1}(\text{Cone}(\mathbb{Z} \rightarrow \Omega^{*<p}) \cong
\mathbb{H}^{k-1}(\text{Cone}(\mathcal{C}^*(\mathcal{U},\mathbb{Z}) \rightarrow \bigoplus_{r+s=*}\mathcal{C}^r(\mathcal{U},\mathcal{E}^s_p)))$$
$$\cong H^{k-1}(\text{Cone}(C^*(\mathcal{U},\mathbb{Z}) \rightarrow
\bigoplus_{r+s=*}C^r(\mathcal{U},\mathcal{E}^s_p))).$$

By Proposition 2.8 and definition of $\hat{\mathbf{H}}^{*}_{smooth}(X,p)$,
we have the short exact sequence:
$$0 \rightarrow H^k_{\mathcal{D}}(X,\mathbb{Z}(p)) \rightarrow
\hat{\mathbf{H}}^{k-1}_{smooth}(X,p) \rightarrow \mathcal{Z}^k_{\mathbb{Z}}(X,p) \rightarrow 0.$$

Hence, for any Deligne class $\alpha \in H^k_{\mathcal{D}}(X,\mathbb{Z}(p)) \subset
\hat{\mathbf{H}}^{k-1}_{smooth}(X,p)$, we can find a representative
$(a,r)\in \bigoplus_{r+s=k-1}C^r(\mathcal{U},\mathcal{E}^s_p) \oplus
C^k(\mathcal{U},\mathbb{Z})$ with $D_p a=e-r$, $\delta r=0$.
And we have $e=0$ since $\alpha \in \ker{\delta_1}$.
\end{proof}

Via the \v{C}ech-Dolbeault spark complex, we establish another product formula
for Deligne cohomology.

Our goal is to define the product in Deligne cohomology
$$H^k_{\mathcal{D}}(X,\mathbb{Z}(p))\otimes H^l_{\mathcal{D}}(X,\mathbb{Z}(q))
\longrightarrow H^{k+l}_{\mathcal{D}}(X,\mathbb{Z}(p+q)).$$

First, we choose a \v{C}ech-Dolbeault spark representative
$$(a,r)\in \bigoplus_{r+s=k-1}C^r(\mathcal{U},\mathcal{E}^s_p) \oplus
C^k(\mathcal{U},\mathbb{Z})\quad \text{with}\quad D_p a=-r,\quad\delta r=0$$
for Deligne class $\alpha \in H^k_{\mathcal{D}}(X,\mathbb{Z}(p))$,
and a \v{C}ech-Dolbeault spark representative
$$(b,s)\in \bigoplus_{r+s=l-1}C^r(\mathcal{U},\mathcal{E}^s_p) \oplus
C^l(\mathcal{U},\mathbb{Z})\quad \text{with}\quad D_q b=-s,\quad\delta s=0$$
for $\beta  \in H^l_{\mathcal{D}}(X,\mathbb{Z}(q))$.

Since $\Pi_p: \hat{\mathbf{H}}^*_{smooth}(X) \rightarrow \hat{\mathbf{H}}^*_{smooth}(X,p)$
is surjective, there exist smooth hypersparks
$$(\tilde{a},r) \in \bigoplus_{r+s=k-1}C^r(\mathcal{U},\mathcal{E}^s) \oplus
C^k(\mathcal{U},\mathbb{Z}) \quad\text{ with }\quad
\Pi_p[(\tilde{a},r)]=[(a,r)]=\alpha,$$ and
$$(\tilde{b},s) \in \bigoplus_{r+s=l-1}C^r(\mathcal{U},\mathcal{E}^s) \oplus
C^l(\mathcal{U},\mathbb{Z}) \quad\text{ with }\quad \Pi_q[(\tilde{b},s)]=[(b,s)]=\beta.$$

Write the spark equations of $\tilde{a}$ and $\tilde{b}$ as
$$D\tilde{a}=e-r\quad\text{and}\quad D\tilde{b}=f-s,$$ where $\pi_p\tilde{a}=a$, $\pi_pe=0$ and
$\pi_q\tilde{b}=b$, $\pi_qf=0$.

By the product formula of $\hat{\mathbf{H}}^{*}_{smooth}(X)$ constructed in \cite{H1},
$$[(\tilde{a},r)]*[(\tilde{b},s)]=[\tilde{a}\cup f+(-1)^k r\cup \tilde{b},r \cup s]
=[\tilde{a} \cup s +(-1)^ke\cup \tilde{b},r\cup s].$$
We have $$D(\tilde{a}\cup f+(-1)^k r\cup \tilde{b})=e\wedge f-r\cup s \text{
and } \pi_{p+q}(e\wedge f)=0,$$ so we get $\Pi_{p+q}[(\tilde{a}\cup f+(-1)^k r\cup \tilde{b},
r\cup s)] \in H^{k+l}_\mathcal{D}(X,\mathbb{Z}(p+q))$, which is defined to be $\alpha*\beta$.

The product is only dependent on the spark classes
$\alpha$ and $\beta$. If we have another representative $(a',r')\in \alpha$ and a lift
$(\tilde{a}',r')$ with $\Pi_p[(\tilde{a}',r')]=[(a',r')]=\alpha$, then
$[(\tilde{a},r)-(\tilde{a}',r')] \in \ker \Pi_p$. By Lemma 4.4, we can choose a representative
of spark class $[(\tilde{a},r)-(\tilde{a}',r')]$, which is of form $(c,0)$ where $c$ is smooth
and $\pi_p(c)=0$.
Then we have
\begin{eqnarray*}
& &\Pi_{p+q}([(\tilde{a},r)]*[(\tilde{b},s)]-[(\tilde{a}',r')]*[(\tilde{b},s)])\\
&=&\Pi_{p+q}([(\tilde{a},r)-(\tilde{a}',r')]*[(\tilde{b},s)])\\
&=&\Pi_{p+q}([(c,0)]*[(\tilde{b},s)])\\
&=&\Pi_{p+q}([(c\cup f+(-1)^{k}0\cup \tilde{b},0)])\\
&=&0
\end{eqnarray*}
Similarly, we can show the product does not depend on representatives of $\beta$ either.

\begin{thm} \emph{Product formula of Deligne cohomology II} \\
For any Deligne class $\alpha \in H^k_{\mathcal{D}}(X,\mathbb{Z}(p))$ and
$\beta  \in H^l_{\mathcal{D}}(X,\mathbb{Z}(q))$, there exist \v{C}ech-Dolbeault spark representations
$(a,r)$ for $\alpha$ and $(b,s)$ for $\beta$ as above. Let $(\tilde{a},r)$ and
$(\tilde{b},s)$ be smooth hypersparks which are lifts of $(a,r)$ and $(b,s)$.
Then
$$\alpha * \beta=\Pi_{p+q}[(\tilde{a} \cup s +(-1)^ke\cup \tilde{b},r\cup s)]=
[(\pi_{p+q}(\tilde{a} \cup s +(-1)^ke\cup b),r\cup s)]\in
H^{k+l}_{\mathcal{D}}(X,\mathbb{Z}(p+q)).$$
\end{thm}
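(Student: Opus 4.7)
The well-definedness of the product, and the fact that it lands in $H^{k+l}_{\mathcal{D}}(X,\mathbb{Z}(p+q))$, have already been verified in the discussion immediately preceding the theorem, so what remains is to verify that this formula reproduces Beilinson's cup product on Deligne cohomology. Once that is done, the agreement with the de Rham-Federer formula of Theorem 6.8 is automatic, since both then compute Beilinson's product; alternatively, it follows from the ring isomorphism $\hat{\mathbf{H}}^{*}_{spark}(X)\cong\hat{\mathbf{H}}^{*}_{smooth}(X)$ of \cite{H1} together with the commutative square of projections $\Pi_p$ established in Theorem 5.6.

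The strategy is to trace Beilinson's sheaf-level cup product $\cup:\mathbb{Z}_{\mathcal{D}}(p)\otimes\mathbb{Z}_{\mathcal{D}}(q)\to\mathbb{Z}_{\mathcal{D}}(p+q)$ through the acyclic resolutions used in Lemma 6.9. First I would replace the Deligne complex $\mathbb{Z}\to\Omega^{*<p}$ by its \v{C}ech-Dolbeault resolution $C^{*}(\mathcal{U},\mathbb{Z})\to\bigoplus_{r+s=*}C^{r}(\mathcal{U},\mathcal{E}^{s}_{p})$, turning Beilinson's two cases ($x\cup y=x\cdot y$ when $\deg x=0$, and $x\cup y=x\wedge dy$ when $\deg y=q$) into an explicit pairing on the cone cocycles $(a,r)$ satisfying $D_p a=-r$, $\delta r=0$. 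Lifting along the surjections $\Pi_p,\Pi_q$ to the full \v{C}ech-de Rham complex with smooth hypersparks $(\tilde a,r)$ and $(\tilde b,s)$ satisfying $D\tilde a=e-r$, $D\tilde b=f-s$ (and $\pi_p e=0$, $\pi_q f=0$), Beilinson's pairing translates into the cone-level cocycle $\bigl(\,r\cup\tilde b+(-1)^{k}\tilde a\cup s,\ r\cup s\,\bigr)$ modulo $D$-exact terms. Using the spark equation $D\tilde a=e-r$, this representative differs from $\bigl(\tilde a\cup s+(-1)^{k}e\cup\tilde b,\ r\cup s\bigr)$ by the $D$-coboundary of $\tilde a\cup\tilde b$, and hence determines the same class after applying $\Pi_{p+q}$, which is exactly the formula in the theorem.

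The main obstacle will be the sign-bookkeeping in the cone construction and the careful verification that the various bulk cross-terms (in particular $\tilde a\cup\tilde b$ and $e\wedge f$) either are $D$-exact or have holomorphic bidegree outside the truncation range $r<p+q$, and therefore vanish after $\pi_{p+q}$. This is the same mechanism already used to kill $\pi_{p+q}(e\wedge f)$ in the well-definedness argument, together with the compatibility of $\cup$ on \v{C}ech-de Rham with $\wedge$ on forms (\cite{H1}). Once this cocycle-level comparison is in place, bilinearity and naturality of both products promote it to an equality on all of Deligne cohomology, completing the proof.
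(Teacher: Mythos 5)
Your proposal is correct and matches the paper's treatment: Theorem 6.9 carries no separate proof in the text --- its entire content is the construction and well-definedness argument in the immediately preceding paragraphs (the \v{C}ech-Dolbeault representatives with $D_pa=-r$, the lifts along the surjection $\Pi_p$, the identity $[\tilde a\cup f+(-1)^kr\cup\tilde b,\,r\cup s]=[\tilde a\cup s+(-1)^ke\cup\tilde b,\,r\cup s]$, the vanishing of $\pi_{p+q}(e\wedge f)$, and the independence of choices via Lemma 4.4) --- and that is exactly what your first paragraph invokes. The comparison with Beilinson's cup product, which occupies the rest of your write-up, is not actually required for this statement (it is the content of the separate Theorem 6.11), and there your sketch diverges from the paper in the details: the paper's \v{C}ech computation identifies Beilinson's cocycle as $r\cup\tilde b+a\cup\partial b^{l-q,q-1}$ and the correcting coboundary as $(-1)^kD_{p+q}(a\cup(B-b))$ (Lemma 6.12), not as the coboundary of $\tilde a\cup\tilde b$, so if you wish to include that comparison it should follow the paper's cocycle-level calculation rather than your guessed primitive.
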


\begin{thm}
Two product formulas in Theorem 6.5 and Theorem 6.9 are equivalent.
\end{thm}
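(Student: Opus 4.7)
The plan is to reduce the comparison to the known ring isomorphism between de Rham-Federer spark characters and smooth hyperspark characters of $X$, and then use that the projections $\Pi_{p+q}$ in the two models intertwine via this isomorphism. Both formulas compute $\alpha\ast\beta$ as $\Pi_{p+q}$ of a product carried out in the \emph{full} (unrestricted) spark character ring $\hat{\mathbf{H}}^{*}(X)$, so the content of Theorem~6.11 is that these two unrestricted products agree after projection to level $p+q$.

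First, I would invoke Theorem~5.7 together with the result of \cite{H1} which states that the isomorphism $\hat{\mathbf{H}}^{*}_{spark}(X)\cong\hat{\mathbf{H}}^{*}_{smooth}(X)$ is a \emph{ring} isomorphism: the wedge-product formula on de Rham-Federer sparks and the $\cup$-product formula on smooth hypersparks correspond term by term under the quasi-isomorphisms into the common hyperspark complex. Combined with Theorem~5.6, this gives a commutative diagram of ring morphisms
\begin{equation*}
\xymatrix{
\hat{\mathbf{H}}^{*}_{spark}(X) \ar[r]^{\cong} \ar[d]^{\Pi_p} &
\hat{\mathbf{H}}^{*}_{smooth}(X) \ar[d]^{\Pi_p} \\
\hat{\mathbf{H}}^{*}_{spark}(X,p) \ar[r]^{\cong} &
\hat{\mathbf{H}}^{*}_{smooth}(X,p).
}
\end{equation*}

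Second, I would verify that the two inclusions $H^{k}_{\mathcal{D}}(X,\mathbb{Z}(p))\hookrightarrow\hat{\mathbf{H}}^{k-1}_{spark}(X,p)$ and $H^{k}_{\mathcal{D}}(X,\mathbb{Z}(p))\hookrightarrow\hat{\mathbf{H}}^{k-1}_{smooth}(X,p)$ furnished by Lemma~6.4 and Lemma~6.7 are identified under the bottom horizontal isomorphism. This is essentially a restatement of the two lemmas: both identify $H^{k}_{\mathcal{D}}(X,\mathbb{Z}(p))$ with $H^{k-1}$ of a cone complex, and the two cone complexes are quasi-isomorphic via the vertical maps of Theorem~5.7 (which restrict to quasi-isomorphisms on the Deligne subcomplexes because the inclusions $\mathcal{IF}^{*}(X)\hookrightarrow\bigoplus C^{r}(\mathcal{U},\mathcal{IF}^{s})$ and $\bigoplus C^{r}(\mathcal{U},\mathcal{E}^{s}_{p})\hookrightarrow\bigoplus C^{r}(\mathcal{U},\mathcal{D}'^{s}_{p})$ are quasi-isomorphisms).

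Given these two ingredients, the proof becomes formal. For $\alpha\in H^{k}_{\mathcal{D}}(X,\mathbb{Z}(p))$ and $\beta\in H^{l}_{\mathcal{D}}(X,\mathbb{Z}(q))$, choose any lifts $\tilde{\alpha}\in\hat{\mathbf{H}}^{k-1}_{spark}(X)$ and $\tilde{\alpha}'\in\hat{\mathbf{H}}^{k-1}_{smooth}(X)$ (and likewise $\tilde{\beta},\tilde{\beta}'$) corresponding under the top isomorphism. Theorem~6.5 computes $\alpha\ast\beta=\Pi_{p+q}(\tilde{\alpha}\cdot\tilde{\beta})$ and Theorem~6.9 computes $\alpha\ast\beta=\Pi_{p+q}(\tilde{\alpha}'\cdot\tilde{\beta}')$. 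Since the top isomorphism is multiplicative, $\tilde{\alpha}\cdot\tilde{\beta}$ and $\tilde{\alpha}'\cdot\tilde{\beta}'$ agree under it; applying the commutative square and then the bottom horizontal identification of the two Deligne subgroups yields equality in $H^{k+l}_{\mathcal{D}}(X,\mathbb{Z}(p+q))$.

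The main obstacle is bookkeeping rather than geometry: one must verify that the lifts $(\tilde{a},r)$ and $(\tilde{a}',r')$ can be chosen so that they map to the \emph{same} class in the bridging hyperspark complex $\hat{\mathbf{H}}^{*}_{hyperspark}(X)$, and that the good-representative hypothesis of \cite[Proposition~3.1]{HL1} can be imposed simultaneously on both models (so that every wedge/cup product appearing in the comparison is defined). Both points are settled by the explicit chain-level equivalences in \cite{H1}, which I would quote directly rather than re-derive.
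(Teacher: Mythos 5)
Your proposal is correct and follows essentially the same route as the paper: the paper's proof simply cites the ring isomorphism $\hat{\mathbf{H}}^{*}_{spark}(X,p)\cong\hat{\mathbf{H}}^{*}_{smooth}(X,p)$ and the compatibility of the two product structures from \cite[Theorem 4.5]{H1}, which is exactly the reduction you carry out. Your version is more explicit about the commuting square of projections $\Pi_{p+q}$ and the identification of the Deligne subgroups under the isomorphism, but these are details the paper leaves implicit rather than a different argument.
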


\begin{proof}
The product formula in Theorem 6.5 and 6.9 are based on product formulas of
$\hat{\mathbf{H}}^{*}_{spark}(X,p)$ and $\hat{\mathbf{H}}^{*}_{smooth}(X,p)$ established
in \cite{HL1} and \cite{H1} respectively.
Note that $\hat{\mathbf{H}}^{*}_{spark}(X,p)\cong\hat{\mathbf{H}}^{*}_{smooth}(X,p)$ and
the ring structures on them are compatible \cite[Theorem 4.5]{H1}. Hence,
product formulas in Theorem 6.5 and 6.9 are equivalent as well.
\end{proof}

Our product formula is quite explicit compared with the product in \cite{B} which is defined
on the sheaf level. Now we verify that these products are equivalent.

\begin{thm}
The products Theorem 6.5 and 6.9 are equivalent to Beilinson's product.
\end{thm}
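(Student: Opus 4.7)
The plan is to verify the equivalence at the level of the \v{C}ech--Dolbeault model (Theorem 6.9), whose combinatorial \v{C}ech--sheaf format is most directly comparable with Beilinson's sheaf-theoretic definition; equivalence of the de Rham--Federer formula (Theorem 6.5) then follows for free from Theorem 6.10. The main technical tool is the explicit quasi-isomorphism invoked in Lemma 6.8,
\[
\mathbb{Z}_{\mathcal{D}}(p)\;\simeq\;\mathrm{Cone}\bigl(C^*(\mathcal{U},\mathbb{Z})\longrightarrow \bigoplus_{r+s=*}C^r(\mathcal{U},\mathcal{E}^s_p)\bigr)[-1],
\]
coming from the resolutions $\mathbb{Z}\hookrightarrow C^*(\mathcal{U},\mathbb{Z})$ and $\Omega^{*<p}\hookrightarrow\mathcal{E}^*_p\hookrightarrow\bigoplus_{r+s=*}C^r(\mathcal{U},\mathcal{E}^s_p)$. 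This identifies a Deligne class with (the class of) a \v{C}ech--Dolbeault spark pair $(a,r)$ with $D_p a=-r$ and $\delta r=0$.

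The first step is to write Beilinson's cup product at the level of this cone resolution. Beilinson's formula on the sheaf complex prescribes $x\cup y = x\cdot y$ when $\deg x=0$, $x\cup y=x\wedge dy$ when $\deg x>0$ and $y$ sits in top degree, and $0$ otherwise. Lifting $\cup$ through the above resolution by combining the \v{C}ech cup product with the wedge on $\mathcal{E}^*_p$, one obtains a cup product on the cone which, on cocycles $(a,r)\in\mathrm{Cone}_p^{k-1}$ and $(b,s)\in\mathrm{Cone}_q^{l-1}$, takes the cone-complex shape
\[
(a,r)\cup_B(b,s)\;=\;\bigl(a\cup D b\;+\;(-1)^k\, r\cup b,\; r\cup s\bigr),
\]
with all products interpreted as \v{C}ech-cup-cum-wedge and all signs dictated by the total differential $D_p=\delta+(-1)^r d_p$ together with the cone shift.

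The second step is to match this against the spark formula $\alpha*\beta=\Pi_{p+q}[(\tilde a\cup f+(-1)^k r\cup\tilde b,\;r\cup s)]=\Pi_{p+q}[(\tilde a\cup s+(-1)^k e\cup\tilde b,\;r\cup s)]$ from Theorem 6.9, where the lifts satisfy $D\tilde a=e-r$ and $D\tilde b=f-s$ with $\pi_p e=0=\pi_q f$. Applying $\pi_{p+q}$ to the identity $D(\tilde a\cup\tilde b)=e\cup\tilde b+(-1)^{k-1}\tilde a\cup f -(r\cup\tilde b+(-1)^{k-1}\tilde a\cup s)-(r\cup s)\cdot(\pm)$ shows that the two representatives of $\alpha*\beta$ exhibited in Theorem 6.9 differ by a cone-coboundary, and that after projection by $\Pi_{p+q}$ the spark representative $\Pi_{p+q}[(\tilde a\cup f+(-1)^k r\cup\tilde b,\, r\cup s)]$ coincides with $(a,r)\cup_B(b,s)$ in $\hat{\mathbf{H}}^{k+l-1}_{\mathrm{smooth}}(X,p+q)$; descending to the image in $H^{k+l}_{\mathcal{D}}(X,\mathbb{Z}(p+q))$ then identifies $\alpha*\beta$ with $\alpha\cup_B\beta$.

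The main obstacle will be the bookkeeping: Beilinson's cup product is strictly multiplicative only after one passes to a multiplicative resolution, so matching the signs (arising from the cone shift, from $D_p=\delta+(-1)^r d_p$, from the truncation $\pi_{p+q}$, and from Beilinson's own sign conventions) with those hard-coded into Theorem 6.9 will require a careful chain-level homotopy. I would handle this by constructing $\cup_B$ directly on the cone complex and checking that the natural inclusion $(\Omega^{*<p},d)\hookrightarrow(\mathcal{E}^*_p,d_p)$ of multiplicative complexes intertwines Beilinson's sheaf-level $\cup$ with $\cup_B$, so that both products fit into a single commutative square of chain-level pairings and therefore induce the same map on $H^*_{\mathcal{D}}(X,\mathbb{Z}(*))$.
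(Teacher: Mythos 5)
Your overall strategy is the same as the paper's: resolve the Deligne complex by a \v{C}ech(-Dolbeault) cone, produce an explicit identification of $H^k_{\mathcal{D}}(X,\mathbb{Z}(p))$ with $\ker\delta_1\subset\hat{\mathbf{H}}^{k-1}_{smooth}(X,p)$, compute Beilinson's product on cocycles, compute the spark product from Theorem 6.9, and show the two representatives differ by an explicit coboundary. The paper does exactly this, sending a Deligne cocycle $\tilde a=r+\sum a^{i,j}$ (with $a^{i,j}\in C^i(\mathcal{U},\Omega^j)$, no further Dolbeault resolution needed) to the hyperspark $(a,(-1)^k r)$, and then comparing.

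The gap is that the decisive computation is exactly the part you defer as ``bookkeeping.'' Two specific problems. First, your chain-level formula $(a,r)\cup_B(b,s)=(a\cup Db+(-1)^k r\cup b,\,r\cup s)$ is not the lift of Beilinson's product: Beilinson's prescription $x\cup y=x\wedge dy$ applies $d$ only to the component of $y$ in top holomorphic degree $\Omega^{q-1}$, so the correct term (as computed in the paper via the appendix of \cite{H1}) is $a\cup\partial b^{l-q,q-1}$, not $a\cup Db$; since $Db=\partial b^{l-q,q-1}-(-1)^l s$ on a cocycle, your formula differs by $a\cup s$ terms that must be accounted for. Second, the identity you invoke for $D(\tilde a\cup\tilde b)$ only re-proves that the two representatives already displayed inside Theorem 6.9 agree (which is part of the product formula of \cite{H1}); it does not connect either of them to Beilinson's representative. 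The actual bridge is the paper's Lemma 6.12, which exhibits the explicit primitive: with lifts $A,B$ satisfying $DA=e-(-1)^kr$, $DB=f-(-1)^ls$ and using $\partial b^{l-q,q-1}=f-D(B-b)$, one has
\begin{equation*}
r\cup b+a\cup\partial b^{l-q,q-1}
=\pi_{p+q}\bigl(A\cup f+r\cup B\bigr)+(-1)^k D_{p+q}\bigl(a\cup(B-b)\bigr).
\end{equation*}
Without producing this (or an equivalent) explicit homotopy term $a\cup(B-b)$, the proof is not complete; the sign and truncation issues you flag are not incidental but are precisely where the identity could fail.
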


\begin{proof}
It is sufficient to show that the product formula in Theorem 6.9 is the same as
Beilinson's product which is induced from the cup product on the sheaf level.

The outline of the proof is following: First, we construct an explicit isomorphism
between $H^k_{\mathcal{D}}(X,\mathbb{Z}(p))$ and
$\ker \delta_1: \hat{\mathbf{H}}^{k-1}_{smooth}(X,p)\rightarrow \mathcal{Z}_{\mathbb{Z}}^{k}(X,p)$;
Then, we calculate the product induced by
$$\cup: \mathbb{Z}_{\mathcal{D}}(p)\otimes \mathbb{Z}_{\mathcal{D}}(q)
\rightarrow \mathbb{Z}_{\mathcal{D}}(p+q)$$ using \v{C}ech resolution; Finally,
we calculate the product via smooth hypersparks defined earlier in this section,
and compare these two products.

Step 1: Fix a good cover $\{\mathcal{U}\}$ of $X$ and
take \v{C}ech resolution for the complex of sheaves
$\mathbb{Z}_{\mathcal{D}}(p)\longrightarrow
\mathcal{C}^*(\mathcal{U},\mathbb{Z}_{\mathcal{D}}(p))$.

Then
$$H^q_{\mathcal{D}}(X,\mathbb{Z}(p))\equiv \mathbb{H}^q(\mathbb{Z}_{\mathcal{D}}(p))
\cong \mathbb{H}^q(Tot(\mathcal{C}^*(\mathcal{U},\mathbb{Z}_{\mathcal{D}}(p))))
\cong H^q(Tot(C^*(\mathcal{U},\mathbb{Z}_{\mathcal{D}}(p))))$$
where $C^*(\mathcal{U},\mathbb{Z}_{\mathcal{D}}(p))$ are the groups of global sections of
sheaves $\mathcal{C}^*(\mathcal{U},\mathbb{Z}_{\mathcal{D}}(p))$ and look like the
following double complex.

\xymatrix{
\vdots & \vdots  & \vdots  & \vdots  &  & \vdots    \\
C^k(\mathcal{U},\mathbb{Z}) \ar[r]^{(-1)^{k}i} \ar[u]^{\delta}
& C^k(\mathcal{U},\Omega^0) \ar[r]^{(-1)^{k}\partial} \ar[u]^{\delta}
& C^k(\mathcal{U},\Omega^1) \ar[r]^{(-1)^{k}\partial} \ar[u]^{\delta} &
C^k(\mathcal{U},\Omega^2)  \ar[r]^{\quad(-1)^{k}\partial} \ar[u]^{\delta} & \cdots \ar[r]^{(-1)^{k}\partial\qquad} &
C^k(\mathcal{U},\Omega^{p-1})  \ar[u]^{\delta}   \\
\vdots  \ar[u]^{\delta}
& \vdots  \ar[u]^{\delta}
& \vdots \ar[u]^{\delta} &
\vdots \ar[u]^{\delta} &  &
\vdots \ar[u]^{\delta}   \\
C^2(\mathcal{U},\mathbb{Z}) \ar[r]^i \ar[u]^{\delta}
& C^2(\mathcal{U},\Omega^0) \ar[r]^{\partial} \ar[u]^{\delta}
& C^2(\mathcal{U},\Omega^1) \ar[r]^{\partial} \ar[u]^{\delta} &
C^2(\mathcal{U},\Omega^2)  \ar[r]^{\partial} \ar[u]^{\delta} & \cdots \ar[r]^{\partial\qquad} &
C^2(\mathcal{U},\Omega^{p-1})  \ar[u]^{\delta}   \\
C^1(\mathcal{U},\mathbb{Z}) \ar[r]^{-i} \ar[u]^{\delta} & C^1(\mathcal{U},\Omega^0) \ar[r]^{-\partial}
\ar[u]^{\delta} & C^1(\mathcal{U},\Omega^1) \ar[r]^{-\partial} \ar[u]^{\delta} &
C^1(\mathcal{U},\Omega^2) \ar[r]^{-\partial} \ar[u]^{\delta} & \cdots \ar[r]^{-\partial\qquad} &
C^1(\mathcal{U},\Omega^{p-1})  \ar[u]^{\delta}   \\
C^0(\mathcal{U},\mathbb{Z}) \ar[r]^i \ar[u]^{\delta} & C^0(\mathcal{U},\Omega^0) \ar[r]^{\partial}
\ar[u]^{\delta} & C^0(\mathcal{U},\Omega^1) \ar[r]^{\partial} \ar[u]^{\delta} &
C^0(\mathcal{U},\Omega^2) \ar[r]^{\partial} \ar[u]^{\delta} & \cdots \ar[r]^{\partial\qquad} &
C^0(\mathcal{U},\Omega^{p-1})  \ar[u]^{\delta} \\}

Let $M^*_p \equiv Tot(C^*(\mathcal{U},\mathbb{Z}_{\mathcal{D}}(p)))$
denote the total complex of the double complex
$C^*(\mathcal{U},\mathbb{Z}_{\mathcal{D}}(p))$ with differential
$$\Delta_p(a)=\left\{
                     \begin{array}{ll}
                       (\delta+(-1)^ri)(a), & \hbox{when}\quad a\in C^r(\mathcal{U},\mathbb{Z});\\
                       (\delta+(-1)^r\partial)(a), & \hbox{when}\quad a\in C^r(\mathcal{U},\Omega^j),\quad j<p-1;\\
                       \delta a, & \hbox{when}\quad a\in C^r(\mathcal{U},\Omega^{p-1}).
                     \end{array}
                   \right.
$$

Now we construct a map
$$\varphi_p: H^*(M^*_p)\cong H^*_{\mathcal{D}}(X,\mathbb{Z}(p))\longrightarrow
\ker\delta_1\subset \hat{\mathbf{H}}^{*-1}_{smooth}(X,p).$$

Assume that a cycle $\tilde{a}\in M^k_p$ represents a Deligne class in $H^k_{\mathcal{D}}(X,\mathbb{Z}(p))$,
and $$\tilde{a}=r+a=r+\sum_{i+j=k-1, j<p}a^{i,j}$$ where
$r\in C^k(\mathcal{U},\mathbb{Z})$ and $a^{i,j}\in C^i(\mathcal{U},\Omega^{j})$.

Note that $a^{i,j}\in C^i(\mathcal{U},\Omega^{j})\subset C^i(\mathcal{U},\mathcal{E}^{j,0})
\subset C^i(\mathcal{U},\mathcal{E}^{j}_p)$,
so $a\in  \bigoplus_{i+j=k-1} C^i(\mathcal{U},\mathcal{E}^{j}_p)$.
And it is easy to see
$$\Delta_p\tilde{a}=0 \Leftrightarrow D_pa+(-1)^kr=0 \text{ and } \delta r=0,$$
where $D_p$ is the differential of the total complex of double complex
$\bigoplus_{r+s=*}C^r(\mathcal{U},\mathcal{E}^s_p)$ defined in Section 4.
Hence, $\varphi_p:\tilde{a}\rightarrow (a,(-1)^kr)$ gives a map from cycles to smooth hypersparks.
Moreover, assume $\tilde{a}$ and $\tilde{a}'$ represent the same Deligne class, i.e.
$\tilde{a}-\tilde{a}'=\Delta_p\tilde{b}$ is a boundary, where
$\tilde{b}=s+\Sigma_{i+j=k-2,j<p}b^{i,j}$ for
$s\in C^{k-1}(\mathcal{U},\mathbb{Z})$ and $b^{i,j}\in C^i(\mathcal{U},\Omega^{j})$.
Then $$a-a'+r-r'=\tilde{a}-\tilde{a}'=\Delta_p\tilde{b}=\delta s+(-1)^{k-1}i(s)+D_pb$$
implies $$a-a'=(-1)^{k-1}s+D_pb \text{ and } (-1)^kr-(-1)^kr'=-(-1)^{k-1}\delta s,$$
i.e. $(a,(-1)^kr)$ and $(a',(-1)^kr')$ represent the same spark class.
So the map ( also denoted by $\varphi_p$ )
$$\varphi_p: H^k_{\mathcal{D}}(X,\mathbb{Z}(p))\rightarrow
 \hat{\mathbf{H}}^{k-1}_{smooth}(X,p)$$
which maps a Deligne class $[\tilde{a}]$ to a smooth hyperspark class $[(a,(-1)^kr)]$ is well-defined.
$\varphi_p([\tilde{a}])=[(a,(-1)^kr)]$ satisfies the spark equation
$D_pa+(-1)^kr=0$, so we have $Im \varphi_p\subset \ker\delta_1$.
We have known $\varphi_p: H^k_{\mathcal{D}}(X,\mathbb{Z}(p))\rightarrow
 \ker\delta_1$ is an isomorphism from Lemma 6.8.

Step 2: The product formula for Deligne classes is induced by the cup product
$$\cup: \mathbb{Z}_{\mathcal{D}}(p)\otimes \mathbb{Z}_{\mathcal{D}}(q)
\rightarrow \mathbb{Z}_{\mathcal{D}}(p+q)$$
with the formula
$$x\cup y=\left\{
    \begin{array}{ll}
      x\cdot y & \hbox{if } \deg x=0;\\
      x\wedge dy & \hbox{if } \deg x>0 \hbox{ and } \deg y=q;\\
      0 & \hbox{otherwise.}
    \end{array}
  \right.$$

In the appendix of \cite{H1}, we showed the explicit product formula on \v{C}ech cycles.

Assume $$\alpha\in H^k_{\mathcal{D}}(X,\mathbb{Z}(p))\quad\text{and}\quad
\beta\in H^l_{\mathcal{D}}(X,\mathbb{Z}(q)),$$ and let
$$\tilde{a}=r+a=r+\sum_{i+j=k-1,j<p}a^{i,j}\in M^k_p \text{ be a representative of }\alpha$$
and  $$\tilde{b}=s+b=s+\sum_{i+j=l-1,i<q} b^{i,j}\in M^l_q\text{ be a representative of }\beta$$
where
$$r\in C^k(\mathcal{U},\mathbb{Z}),\quad a^{i,j}\in C^i(\mathcal{U},\Omega^{j}),$$
and
$$s\in C^l(\mathcal{U},\mathbb{Z}),\quad b^{i,j}\in C^i(\mathcal{U},\Omega^{j}).$$

By \cite[Theorem 7.1]{H1}, we calculate
\begin{eqnarray*}
\alpha\cup\beta &=&[\tilde{a}\cup \tilde{b}]\\
&=&[rs+\Sigma_{i+j=l-1,j<q}(-1)^{0\cdot i}r\cdot b^{i,j}+
\Sigma_{i+j=k-1,j<p}(-1)^{j\cdot(l-q)}a^{i,j}\wedge\partial b^{l-q,q-1}]\\
&=&[r\cup \tilde{b}+a \cup\partial b^{l-q,q-1}]\\
\end{eqnarray*}

Step 3: Let us calculate the product of $\alpha$ and $\beta$ by the formula in Theorem 6.9.

$\varphi_p(\tilde{a})=(a,(-1)^kr)$ and $\varphi_q(\tilde{b})=(b,(-1)^ls)$
are two smooth hypersparks which represent Deligne classes $\alpha$ and $\beta$ respectively.
The spark equations associated to them are:
$$D_pa=0-(-1)^kr,\quad \delta (-1)^kr=0$$
and
$$D_pb=0-(-1)^ls,\quad \delta (-1)^ls=0.$$

Because of the surjectivity of the map
$\Pi_p: \hat{\mathbf{H}}^*_{smooth}(X) \rightarrow \hat{\mathbf{H}}^*_{smooth}(X,p)$
, there exist
$$(A,(-1)^kr) \in \bigoplus_{i+j=k-1}C^i(\mathcal{U},\mathcal{E}^j) \oplus
C^k(\mathcal{U},\mathbb{Z}) \text{ with }
\Pi_p[(A,(-1)^kr)]=[(a,(-1)^kr)]=\alpha,$$ and
$$(B,(-1)^ls) \in \bigoplus_{i+j=l-1}C^i(\mathcal{U},\mathcal{E}^j) \oplus
C^l(\mathcal{U},\mathbb{Z}) \text{ with } \Pi_q[(B,(-1)^ls)]=[(b,(-1)^ls)]=\beta.$$

Assume the spark equations for $A$ and $B$ are
$DA=e-(-1)^kr$, $DB=f-(-1)^ls$, then $\pi_pA=a$, $\pi_pe=0$ and
$\pi_qB=b$, $\pi_qf=0$.

By the product formula in \cite{H1},
$$[(A,r)]*[(B,s)]=[A\cup f+(-1)^k (-1)^kr\cup B,(-1)^kr \cup (-1)^ls]
=[A\cup f+ r\cup B,(-1)^{k+l}r \cup  s].$$
We have $$D(A\cup f+ r\cup B)=e\wedge f-(-1)^{k+l}r\cup s \text{
and } \pi_{p+q}(e\wedge f)=0,$$ so we get $\Pi_{p+q}[(A\cup f+ r\cup B,
(-1)^{k+l}r\cup s)] \in H^{k+l}_\mathcal{D}(X,\mathbb{Z}(p+q))$, and define it as $\alpha*\beta$.

We compare two results under isomorphism $\varphi_{p+q}: H^k_{\mathcal{D}}(X,\mathbb{Z}(p+q))\rightarrow
 \ker\delta_1$.

The following Lemma shows that
$\varphi_{p+q}(r\cup \tilde{b}+a \cup\partial b^{l-q,q-1})=(r\cup b+a \cup\partial b^{l-q,q-1},(-1)^{k+l}r\cup s)$
and $(\pi_{p+q}(A\cup f+ r\cup B),
(-1)^{k+l}r\cup s)$ represent the same class.
\end{proof}
\begin{lem}
$r\cup b+a \cup\partial b^{l-q,q-1}=\pi_{p+q}(A\cup f+ r\cup B)+(-1)^kD_{p+q}(a\cup(B-b)).$
\end{lem}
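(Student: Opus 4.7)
The plan is to verify the identity by a direct computation, using the graded Leibniz rule for the total differential $D$ with respect to the cup product together with the spark equations for $A$, $B$ and the cocycle conditions for $\tilde{a}$ and $\tilde{b}$.

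First I would simplify $\pi_{p+q}(A \cup f + r \cup B)$ by analyzing how $\pi_{p+q}$ interacts with the cup product. Writing $A = a + A'$ and $B = b + B'$ with $\pi_p A' = 0$ and $\pi_q B' = 0$, the components of $A'$ and $B'$ have first Dolbeault index $\geq p$ and $\geq q$ respectively. Since $\pi_q f = 0$ forces $f$ to sit in bidegrees with first index $\geq q$, the product $A' \cup f$ lies in bidegrees with first index $\geq p+q$ and is annihilated by $\pi_{p+q}$. Moreover $r$ has bidegree $(0,0)$, so it commutes with the projection, yielding
\begin{equation*}
\pi_{p+q}(A \cup f + r \cup B) = \pi_{p+q}(a \cup f) + r \cup b + r \cup \pi_{p+q}(B - b).
\end{equation*}

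Next I would compute $Da$ and $Db$ as elements of the untruncated smooth hyperspark complex. Applying the cocycle relations $\Delta_p \tilde{a} = 0$ and $\Delta_q \tilde{b} = 0$ and telescoping the bidegree components, all internal $\delta$- and $\partial$-contributions cancel pairwise, leaving the clean formulas
\begin{equation*}
Da = -(-1)^k r + (-1)^{k-p}\partial a^{k-p,p-1}, \qquad Db = -(-1)^l s + (-1)^{l-q}\partial b^{l-q,q-1}.
\end{equation*}
The remainder terms live at first Dolbeault index exactly $p$ and $q$, and are therefore killed by $\pi_p$ and $\pi_q$; this is consistent with the spark equations of $(a, (-1)^k r)$ and $(b, (-1)^l s)$ at level $p$ and $q$.

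With these formulas I would substitute into the Leibniz rule $D(a \cup (B-b)) = Da \cup (B-b) + (-1)^{k-1} a \cup D(B-b)$, where $D(B-b) = f - (-1)^{l-q}\partial b^{l-q,q-1}$. Multiplying by $(-1)^k$ and applying $\pi_{p+q}$ term by term: the contribution from $\partial a^{k-p,p-1} \cup (B-b)$ vanishes since its first Dolbeault index is $\geq p+q$; the $r \cup (B-b)$ contribution projects to $-r \cup \pi_{p+q}(B-b)$, which cancels the corresponding term from the simplification above; the $a \cup f$ contribution cancels $\pi_{p+q}(a \cup f)$; and the $a \cup \partial b^{l-q,q-1}$ term survives. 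The surviving terms sum to $r \cup b + a \cup \partial b^{l-q,q-1}$, which is the left-hand side.

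The main technical obstacle is the careful bookkeeping of Koszul signs in the cup product on the total \v{C}ech-Dolbeault complex, especially the component-wise signs $(-1)^{j(l-q)}$ exhibited in Step 2 of the proof of Theorem 6.11. One must verify that the Leibniz rule produces exactly the combination of signs needed so that the surviving right-hand side term matches the left-hand side verbatim. Once the sign conventions are matched consistently, the identity reduces to the cancellations above.
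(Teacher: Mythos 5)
Your proof is correct and follows essentially the same route as the paper's: both rest on the Leibniz rule for $D$ applied to $a\cup(B-b)$, the spark equations $D_pa=-(-1)^kr$ and $D(B-b)=f-\partial b^{l-q,q-1}$, and the vanishing under $\pi_{p+q}$ of cup products whose first Dolbeault index is at least $p+q$. The only differences are cosmetic (you simplify $\pi_{p+q}(A\cup f+r\cup B)$ before invoking Leibniz rather than after), and the Koszul-sign bookkeeping you flag as the remaining obstacle is handled no more carefully in the paper's own computation.
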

\begin{proof}
Compare $$Db=\partial b^{l-q,q-1}+D_pb=\partial b^{l-q,q-1}-(-1)^ls$$
and $$DB=f-(-1)^ls,$$ we have
$$ \partial b^{l-q,q-1}=f-D(B-b).$$

\begin{eqnarray*}
& &\text{Right hand side}\\
&=&\pi_{p+q}(A\cup f+ r\cup B)+(-1)^kD_{p+q}(a\cup(B-b))\\
&=&\pi_{p+q}(A\cup f)+ \pi_{p+q}(r\cup B)+(-1)^k\pi_{p+q}D(a\cup(B-b))\\
&=&\pi_{p+q}(a\cup f)+ \pi_{p+q}(r\cup B)+(-1)^k\pi_{p+q}(Da\cup(B-b)+(-1)^{k-1}a\cup D(B-b))\\
&=&\pi_{p+q}(a\cup f)+ \pi_{p+q}(r\cup B)+(-1)^k\pi_{p+q}(D_pa\cup(B-b))-\pi_{p+q}(a\cup D(B-b))\\
&=&\pi_{p+q}(a\cup f)+ \pi_{p+q}(r\cup B)+(-1)^k\pi_{p+q}(-(-1)^kr\cup(B-b))-\pi_{p+q}(a\cup D(B-b))\\
&=&\pi_{p+q}(a\cup f)+ \pi_{p+q}(r\cup B)-\pi_{p+q}(r(B-b))-\pi_{p+q}(a\cup D(B-b))\\
&=&\pi_{p+q}(a\cup f-a\cup D(B-b))+ \pi_{p+q}(r\cup b) \\
&=&\pi_{p+q}(a \cup\partial b^{l-q,q-1})+ \pi_{p+q}(r\cup b) \\
&=&a \cup\partial b^{l-q,q-1}+ r\cup b \\
&=& \text{Left hand side}.
\end{eqnarray*}

\end{proof}

\section{Application to Algebraic Cycles}
We begin this section by observing that, from the viewpoint of spark theory,
it is trivial that every analytic subvariety of a complex manifold represents
a Deligne cohomology class. Furthermore, when two cycles intersect properly,
their intersection represents the product of the Deligne classes they represent.
We shall then give a proof of the rational invariance of these Deligne classes in the
algebraic setting, thereby giving the well known ring homomorphism
$$\psi: CH^*(X)\rightarrow H^{2*}_{\mathcal{D}}(X,\mathbb{Z}(*)).$$

Let $V$ be a subvariety of complex manifold $X$ with codimension $p$.
Then integration over the regular part of $V$
$$[V](\alpha)\equiv\int_{\text{Reg }V}\alpha, \quad \forall \text{ smooth form }
\alpha \text{ with compact support }$$
defines a degree $(p,p)$ current $[V]$ on $X$.
Moreover, $[V]$ is rectifiable \cite{Har}, hence $[V]\in \mathcal{IF}^{2p}(X)$.
It is easy to see $V$ represents a Deligne class.

\begin{prop}
$(0,[V])$ represents a spark class in $\hat{\mathbf{H}}^{2p-1}(X,p)$. Moreover,
this class belongs to $H^{2p}_{\mathcal{D}}(X,\mathbb{Z}(p))=\ker \delta_1\subset
\hat{\mathbf{H}}^{2p-1}(X,p)$.
\end{prop}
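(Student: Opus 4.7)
The plan is to verify the two claims by directly checking the spark equations and then applying Proposition 2.8 to locate the class in the $3\times 3$ diagram.

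First, I would confirm that the pair $(0,[V])$ lies in the correct space. By the cited fact (Harvey), $[V]\in \mathcal{IF}^{2p}(X)$, so $[V]$ is an admissible element of $I^{2p}_p=\mathcal{IF}^{2p}(X)$, and obviously $0\in \mathcal{D}'^{2p-1}(X,p)=F^{2p-1}_p$. Thus $(0,[V])\in F^{2p-1}_p\oplus I^{2p}_p$ has the correct degrees.

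Next I would verify the two spark equations $d_p(0)=e-\Psi_p([V])$ and $d[V]=0$. The second holds because $V$ is an analytic subvariety of $X$: integration over $\mathrm{Reg}\,V$ defines a closed current (the boundary vanishes, since the singular locus has real codimension at least two in $V$ and Stokes' theorem applies — this is classical). For the first equation, the key observation is a bidegree count: $[V]$ is a current of pure bidegree $(p,p)$, while $\Psi_p=\pi_p\circ i$ projects onto the subspace $\bigoplus_{r+s=2p,\,r<p}\mathcal{D}'^{r,s}(X)$. Since $[V]$ has $r=p$, we have $\Psi_p([V])=0$. Hence the spark equation reduces to $0=e-0$, which is satisfied by $e=0\in\mathcal{E}^{2p}(X,p)$. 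Therefore $(0,[V])$ is a Dolbeault-Federer spark of level $p$ and degree $2p-1$, and it determines a class $[(0,[V])]\in\hat{\mathbf{H}}^{2p-1}(X,p)$.

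For the second assertion I would read off $\delta_1$ directly from its definition in Lemma 2.4: $\delta_1([(0,[V])])=e=0$. The middle row of the $3\times 3$ grid in Proposition 3.4 is the short exact sequence
\[
0\longrightarrow H^{2p}_{\mathcal{D}}(X,\mathbb{Z}(p))\longrightarrow \hat{\mathbf{H}}^{2p-1}(X,p)\stackrel{\delta_1}{\longrightarrow} \mathcal{Z}^{2p}_{\mathbb{Z}}(X,p)\longrightarrow 0,
\]
so the kernel of $\delta_1$ is exactly $H^{2p}_{\mathcal{D}}(X,\mathbb{Z}(p))$. Since $\delta_1([(0,[V])])=0$, the spark class of $(0,[V])$ lies in this kernel, proving the proposition.

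There is essentially no obstacle here: the whole argument is a bookkeeping exercise in bidegrees, with the only nontrivial input being the already-quoted fact that analytic subvarieties define closed locally integrally flat currents. The entire content of the statement is really that the vanishing of the $(r,s)$-components of $[V]$ with $r<p$ — a trivial consequence of the $(p,p)$-type — forces $(0,[V])$ to satisfy the sharper spark equation with $e=0$, which is precisely the condition for a spark class to land in Deligne cohomology.
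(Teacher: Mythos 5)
Your proof is correct and follows essentially the same route as the paper's: the whole argument rests on the observation that $[V]$ has pure bidegree $(p,p)$, hence $\Psi_p([V])=\pi_p([V])=0$, so the spark equation holds with $e=0$ and the class lands in $\ker\delta_1=H^{2p}_{\mathcal{D}}(X,\mathbb{Z}(p))$. You simply spell out the closedness of $[V]$ and the degree bookkeeping that the paper leaves implicit.
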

\begin{proof}
Since $[V]$ is of type $(p,p)$, we have $\Psi_p([V])=0$ and $(0,[V])$ satisfies
the spark equation $d0=0-\Psi_p([V])$.
\end{proof}

\begin{prop}
Let $V$, $W$ be two subvarieties which intersect properly. Then
$$[(0,[V])]*[(0,[W])]=[(0,[V\cap W])].$$
\end{prop}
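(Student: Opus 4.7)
The plan is to reduce to the de Rham--Federer product formula by lifting through the surjection $\Pi_{p+q}\colon\hat{\mathbf{H}}^{*}(X)\to\hat{\mathbf{H}}^{*}(X,p+q)$ and then applying Theorem 6.5. First I would pick de Rham--Federer sparks $(\tilde{a},[V])$ and $(\tilde{b},[W])$ that lift $(0,[V])$ and $(0,[W])$; existence is guaranteed by the surjectivity of $\Pi_p$ and $\Pi_q$ in Theorem 3.6, and the proof of that theorem allows us to arrange $\pi_p(\tilde{a})=0$ and $\pi_q(\tilde{b})=0$. Writing their spark equations as $d\tilde{a}=e-[V]$ and $d\tilde{b}=f-[W]$, the conditions $\pi_p(\tilde{a})=0$ combined with the fact that $[V]$ is of type $(p,p)$ force every component of $\tilde{a}$ (and therefore of $e$) to live in filtration level $\geq p$; an analogous statement holds for $\tilde{b}$ and $f$ at level $q$.

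Since the spark degree of $\tilde{a}$ is $2p-1$, the sign $(-1)^{\deg\tilde{a}+1}$ in the product formula is $+1$, so the formula of Theorem 6.5 gives
$$[(\tilde{a},[V])]\ast[(\tilde{b},[W])]=\bigl[\bigl(\tilde{a}\wedge f+[V]\wedge\tilde{b},\,[V]\wedge[W]\bigr)\bigr]$$
in $\hat{\mathbf{H}}^{2(p+q)-1}(X)$. Applying $\Pi_{p+q}$, I would then analyze each piece by bidegree: both $\tilde{a}\wedge f$ and $[V]\wedge\tilde{b}$ have every bidegree component $(r,s)$ with $r\geq p+q$, so the projection $\pi_{p+q}$ annihilates the smooth-form slot. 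For the integrally flat slot, the proper intersection hypothesis lets us invoke the Federer intersection theorem (cf.\ \cite{HLZ}, \cite{Har}) to conclude $[V]\wedge[W]=[V\cap W]$. The computation then collapses to
$$\Pi_{p+q}\bigl[\bigl(\tilde{a}\wedge f+[V]\wedge\tilde{b},\,[V]\wedge[W]\bigr)\bigr]=[(0,[V\cap W])],$$
which is the desired identity.

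The main obstacle is not the algebra but the analytic well-definedness of the three wedge products $\tilde{a}\wedge f$, $[V]\wedge\tilde{b}$, and $[V]\wedge[W]$ appearing simultaneously. I would handle this by choosing good representatives in the sense of \cite[Proposition 3.1]{HL1}: this is exactly the geometric measure theory input from \cite{HLZ} that underwrites the ring structure on $\hat{\mathbf{H}}^{*}(X)$, and it simultaneously guarantees that the intersection $[V]\wedge[W]$ computes $[V\cap W]$ under the properness assumption. Once this is secured, the bidegree argument above finishes the proof, and as a bonus confirms consistency of our construction with the classical intersection product on Chow groups mapped into Deligne cohomology.
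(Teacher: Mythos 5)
Your proposal is correct and follows essentially the same route as the paper: lift $(0,[V])$ and $(0,[W])$ to de Rham--Federer sparks with $\pi_p(\tilde a)=\pi_q(\tilde b)=0$, apply the product formula of \cite{HLZ}, observe that $\pi_{p+q}$ kills $\tilde a\wedge f+[V]\wedge\tilde b$ by holomorphic bidegree, and identify $[V]\wedge[W]=[V\cap W]$ under proper intersection. You supply somewhat more detail (the sign check, the good-representative justification) than the paper's terse argument, but the substance is identical.
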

\begin{proof}
Let $V$, $W$ be two subvarieties in $X$ with codimension $p$ and $q$ respectively.
Let $r$ and $s$ denote currents $[V]$ and $[W]$, then $r\wedge s=[V\cap W]$.
Now we calculate the product of two Deligne classes $[(0,r)]$ and $[(0,s)]$.
First, fix a lift of $(0,r)$, say $(a,r)$ with spark equation $da=e-r$ and
a lift of $(0,s)$, $(b,s)$ with $db=f-s$. Note that $\pi_p(a)=0$, $\pi_p(e)=0$
and $\pi_q(b)=0$, $\pi_q(f)=0$.
By product formula, $[(a,r)][(b,s)]=[(a\wedge f+r\wedge b, r\wedge s)]$.
Since $\pi_{p+q}(a\wedge f+r\wedge b)=0$, we have
$$[(0,[V])]*[(0,[W])]=\Pi_{p+q}([(a\wedge f+r\wedge b, r\wedge s)])=[(0,r\wedge s)]=[(0,[V\cap W])].$$
\end{proof}

\begin{prop}
If $X$ is an algebraic manifold of dimension $n$ and $V$ is an algebraic cycle which is rationally
equivalent to zero, then $V$ represents zero Deligne class.
\end{prop}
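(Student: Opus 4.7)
The plan is to reduce to a principal cycle $V = \mathrm{div}_Y(f)$ and write down explicit $(b,s)$ exhibiting $(0,[V]) \sim (0,0)$ in the Dolbeault--Federer spark complex of level $p$. Recall that a spark class is killed precisely when such $(b,s) \in \mathcal{D}'^{2p-2}(X,p) \oplus \mathcal{IF}^{2p-1}(X)$ exists with $d_p b + \Psi_p(s) = 0$ and $-ds = [V]$. Since rational equivalence on $CH^p(X)$ is by definition generated by divisors $\mathrm{div}_Y(f)$, where $Y \subset X$ is an irreducible subvariety of codimension $p-1$ and $f$ is a nonzero rational function on $Y$, bilinearity reduces the problem to a single such $V$.

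Fix an oriented smooth arc $\gamma \subset \mathbb{P}^1$ from $0$ to $\infty$, and set
$$s \;\equiv\; [\,Y_{\mathrm{reg}} \cap f^{-1}(\gamma)\,] \;\in\; \mathcal{IF}^{2p-1}(X),$$
interpreted as an integrally flat current on $X$ via King's extension theorem. Because $d\gamma = [\infty]-[0]$ on $\mathbb{P}^1$ and $[Y]$ is $d$-closed (Lelong), Stokes gives $-ds = [Y\cap f^{-1}(0)] - [Y\cap f^{-1}(\infty)] = [V]$, which handles the integral spark equation. Next, choose a single-valued branch of $\phi = \tfrac{1}{2\pi i}\log f$ on $Y\setminus f^{-1}(\gamma)$ (unit jump across the cut) and set
$$b \;\equiv\; \phi\cdot[Y] \;\in\; \mathcal{D}'^{p-1,p-1}(X) \;\subset\; \mathcal{D}'^{2p-2}(X,p).$$
A fibrewise one-dimensional jump computation, reducing to the classical identity $d\log z = dz/z + 2\pi i\,[\gamma]$ on $\mathbb{C}$, yields
$$d(\phi\cdot[Y]) \;=\; \tfrac{1}{2\pi i}\,\tfrac{df}{f}\wedge[Y] \;\pm\; s$$
as currents on $X$. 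The first term has bidegree $(p,p-1)$, hence is annihilated by the projection $\pi_p$, while $\phi\cdot[Y]$ itself is of bidegree $(p-1,p-1)$, so $\pi_p b = b$. Consequently
$$d_p b \;=\; \pi_p\bigl(d(\phi\cdot[Y])\bigr) \;=\; \pm\,\pi_p(s) \;=\; \pm\,\Psi_p(s),$$
and after adjusting the sign of $b$ we obtain $d_p b + \Psi_p(s) = 0$, as required.

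The main obstacle is making the jump formula rigorous when $Y$ is singular and $f$ has points of indeterminacy. I would handle this by passing to a resolution $\tilde{Y}\to Y$ on which $f$ extends to a morphism to $\mathbb{P}^1$, performing the computation there, and pushing forward via the functoriality in Theorem~3.12; alternatively one can argue on $Y_{\mathrm{reg}}\setminus(|f|=0\cup|f|=\infty)$ and invoke the fact that currents only detect codimension-one pathology, so the lower-dimensional locus contributes nothing. Once this is done, the equivalence $(0,[V])\sim(0,0)$ holds for every principal cycle, hence for every cycle rationally equivalent to zero, which together with Propositions 7.1 and 7.2 gives a well-defined ring homomorphism $\psi: CH^*(X)\to H^{2*}_{\mathcal{D}}(X,\mathbb{Z}(*))$.
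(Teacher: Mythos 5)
Your construction is correct in substance, but it takes a genuinely different route from the paper. The paper does not reduce to principal cycles: it uses only that $V$ is homologically trivial to write $V=dS$ for a rectifiable current $S$, observes that $(0,[V])$ is then spark-equivalent to $(\pi_p(S),0)$, and identifies the vanishing of that class with the vanishing of the Abel--Jacobi invariant of $V$ in $\mathcal{J}^p(X)$; it then kills that invariant by spreading the rational equivalence over a cycle $W\subset\mathbb{P}^1\times X$ and showing the induced map $\mu:\mathbb{P}^1\to\mathcal{J}^p(X)$ is holomorphic (by pairing with harmonic representatives of $F^{n-p+1}H^{2n-2p+1}(X,\mathbb{C})$), hence constant. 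You instead reduce to $V=\mathrm{div}_Y(f)$ and exhibit an explicit trivializing pair $(b,s)=\bigl(\tfrac{1}{2\pi i}\log f\cdot[Y],\,[Y_{\mathrm{reg}}\cap f^{-1}(\gamma)]\bigr)$ --- a current-level Poincar\'e--Lelong argument. The type bookkeeping is right: $\tfrac{1}{2\pi i}\tfrac{df}{f}\wedge[Y]$ has bidegree $(p,p-1)$ and dies under $\pi_p$, while $\phi\cdot[Y]$ lives in $\mathcal{D}'^{p-1,p-1}(X)\subset\mathcal{D}'^{2p-2}(X,p)$, so $d_pb=\pm\Psi_p(s)$ as you claim. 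What each approach buys: yours is constructive, stays entirely inside the spark formalism, and actually produces the primitive, which fits the explicit-formula spirit of the paper; its cost is the geometric-measure-theoretic overhead you correctly flag (local integrability of $\log|f|$ against $[Y]$, rectifiability and locally finite mass of $[Y_{\mathrm{reg}}\cap f^{-1}(\gamma)]$ near $\mathrm{Sing}\,Y$ and the indeterminacy locus), which is standard via King's and Harvey's structure theorems or a resolution of $Y$. The paper's proof is shorter and hides all of that in the single classical statement $V=dS$, at the price of invoking Hodge theory and the rigidity of holomorphic maps from $\mathbb{P}^1$ to a complex torus. Both arguments are sound.
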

\begin{proof}
Assume $V$ is an algebraic cycle with dimension $k$ and codimension $p$.
If $V$ is rationally equivalent to zero, in particular, $V$ represents
zero homology class, then $V=dS$ for some rectifiable current with degree $2p-1$
( and real dimension $2k+1$ ).
Hence $(0,V)$ is equivalent to $(\pi_p(S),0)$ as sparks of level $p$.
$(\pi_p(S),0)$ represents zero class if and only of $$\pi_p(S)=d_pA+\Psi_pR
\text{ for some current } A\in \mathcal{D}'^{2p-2}(X,p) \text{ and closed current }
R\in \mathcal{IF}^{2p-1}(X),$$ i.e.
$$[\pi_p(S)]=0\in H^{2p-1}(X,\mathbb{C})/F^{p}H^{2p-1}(X,\mathbb{C})\oplus H^{2p-1}(X,\mathbb{Z})\equiv\mathcal{J}^{p},$$
which means the Abel-Jacobi invariant of $V$ is zero.
It is well known that the Abel-Jacobi invariant is trivial for a cycle rationally
equivalent to zero. So we are done. We give a short and direct proof of this fact now.

If $V$ is rationally equivalent to zero, then there is a cycle $W\subset \mathbb{P}^1\times
X$ of codimension $p$, such that $V=\pi^{-1}(1)-\pi^{-1}(0)$ where $\pi:W\rightarrow \mathbb{P}^1$,
the restriction of the projection $pr_1:\mathbb{P}^1\times X\rightarrow \mathbb{P}^1$,
is equidimensional over $\mathbb{P}^1$.
Define $V_z=\pi^{-1}(z)-\pi^{-1}(0)$, then we have a map $\mu: \mathbb{P}^1\rightarrow \mathcal{J}^{p}$
which assigns $z$ the Abel-Jacobi invariant of $V_z$. We shall show that $\mu$ is holomorphic, hence a
constant map to zero.

Let us recall the construction of the Abel-Jacobi map briefly. If $V$ is a cycle homologous
to zero, then $V=dS$. Integrating over $S$, $\int_S$ defines a class in
$$H^{2p-1}(X,\mathbb{C})/F^{p}H^{2p-1}(X,\mathbb{C})=F^{n-p+1}H^{2n-2p+1}(X,\mathbb{C})^*.$$
If $dS'=V$, then the difference $\int_S-\int_{S'}$ lies in the image of map
$$H_{2n-2p+1}(X,\mathbb{Z})\rightarrow F^{n-p+1}H^{2n-2p+1}(X,\mathbb{C})^*.$$
Therefore, we get the Abel-Jacobi invariant of $V$ defined in
$$\mathcal{J}^{p}\equiv H^{2p-1}(X,\mathbb{C})/F^{p}H^{2p-1}(X,\mathbb{C})\oplus H^{2p-1}(X,\mathbb{Z}).$$

Now we focus on the map $\mu$. Let $\gamma_z$ be a curve on $\mathbb{P}^1$ connecting
$0$ and $z$ and $S_z=\pi^{-1}(\gamma_z)$ with $dS_z=V_z$. We want to show that
$\mu: z\mapsto \int_{S_z}$ is holomorphic. Note that
$$F^{n-p+1}H^{2n-2p+1}(X,\mathbb{C})\cong\bigoplus_{r+s=2n-2p+1 \atop r\geq n-p+1}\mathcal{H}^{r,s}(X)$$
where $\mathcal{H}^{r,s}(X)$ is the group of harmonic $(r,s)$ forms.
So it suffices to show $\mu_{\alpha}:z\mapsto\int_{S_z}\alpha$ is holomorphic for every
$\alpha\in\mathcal{H}^{r,s}(X)$, $r+s=2n-2p+1$, $r\geq n-p+1$.

Let $\nu$ be a vector field in a small neighborhood $U$ of $z$ in $\mathbb{P}^1$, and
$\tilde{\nu}$ be a lift of $\nu$ in $U\times X$. If $\nu$ is of type $(0,1)$,
we have
$$\nu \int_{S_z}\alpha=
\int_{\pi^{-1}(z)}\tilde{\nu}\lrcorner \alpha=0$$
for any $\alpha\in\mathcal{H}^{r,s}(X)$, $r+s=2n-2p+1$, $r\geq n-p+1$.
The last equality follows from the fact $\tilde{\nu}\lrcorner \alpha$ has
no component of type $(n-p,n-p)$.

\end{proof}
By the last propositions and Chow's moving lemma, it is easy to see
\begin{thm}
The map $V\mapsto[(0,[V])]$ induces a ring homomorphism
$$\psi: CH^*(X)\rightarrow H^{2*}_{\mathcal{D}}(X,\mathbb{Z}(*)).$$
\end{thm}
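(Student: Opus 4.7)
The plan is to assemble the three preceding propositions into a ring homomorphism, using Chow's moving lemma to handle the cases where the naive intersection is not transverse. First I would extend the assignment $V \mapsto [(0,[V])]$ from irreducible subvarieties of codimension $p$ linearly to the free abelian group $Z^p(X)$ of algebraic cycles, obtaining a homomorphism $\widetilde{\psi}: Z^p(X) \to H^{2p}_{\mathcal{D}}(X,\mathbb{Z}(p))$ for each $p$. Proposition 7.1 guarantees that each term lands in Deligne cohomology, and the construction is clearly additive.

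Second, I would use Proposition 7.3 to show that $\widetilde{\psi}$ factors through rational equivalence: if $V$ is a cycle rationally equivalent to zero, then $\widetilde{\psi}(V)=0$. Hence $\widetilde{\psi}$ descends to a well-defined group homomorphism $\psi: CH^p(X) \to H^{2p}_{\mathcal{D}}(X,\mathbb{Z}(p))$. Taking the direct sum over $p$ gives a graded abelian group homomorphism $\psi:CH^*(X)\to H^{2*}_{\mathcal{D}}(X,\mathbb{Z}(*))$.

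Third, to show $\psi$ is a ring homomorphism, let $\alpha\in CH^p(X)$ and $\beta\in CH^q(X)$. By Chow's moving lemma, I can pick representatives $V\in\alpha$ and $W\in\beta$ whose components intersect properly, so that their intersection product in $CH^{p+q}(X)$ is represented by the set-theoretic intersection $V\cap W$ with appropriate multiplicities. Proposition 7.2 (extended by bilinearity to formal sums) then gives
$$\psi(\alpha)\cdot\psi(\beta)=[(0,[V])]*[(0,[W])]=[(0,[V\cap W])]=\psi(\alpha\cdot\beta).$$
Finally, the unit $[X]\in CH^0(X)$ maps to the class of the constant spark $(0,[X])$, which is the identity in $\bigoplus_p H^{2p}_{\mathcal{D}}(X,\mathbb{Z}(p))$ under Beilinson's cup product (equivalently, under our product by Theorem 6.11), so $\psi$ preserves units.

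The main obstacle is verifying that the \emph{algebraic} intersection multiplicities used to define the product in $CH^*(X)$ agree with the \emph{current-theoretic} wedge product $[V]\wedge[W]=[V\cap W]$ that appears in Proposition 7.2. For proper intersections of complex subvarieties this identification is classical (the multiplicities coincide because both equal the local intersection numbers computed via Serre's Tor formula or, equivalently, via the geometric multiplicities given by King's formula), but one should at least invoke this explicitly. Once that identification is made, the fact that rational equivalence is killed (Proposition 7.3) together with Chow's moving lemma makes the ring property immediate.
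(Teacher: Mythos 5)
Your proposal is correct and follows essentially the same route as the paper, which simply asserts that the theorem follows from Propositions 7.1--7.3 together with Chow's moving lemma. You have merely filled in the standard details (linear extension to cycles, descent through rational equivalence via Proposition 7.3, and the moving lemma plus Proposition 7.2 for multiplicativity), and your remark about matching algebraic intersection multiplicities with the current-theoretic product $[V]\wedge[W]$ correctly identifies the one point the paper leaves implicit.
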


\section{Chern Classes for Holomorphic Bundles in Deligne Cohomology}

In this section we shall construct Chern classes in Deligne cohomology
for holomorphic bundles $E$ over a complex manifold $X$. These classes have the usual properties
and map to the integral Chern classes under the ring homomorphism $H^{2*}_{\mathcal{D}}(X,\mathbb{Z}(*))
\rightarrow H^{2*}(X,\mathbb{Z})$.

In their fundamental paper \cite{CS}, Cheeger and Simons showed that for a smooth complex
vector bundle $E \rightarrow X$ with unitary connection $\nabla$, there exist refined Chern
classes $\hat{c}_k(E,\nabla) \in \hat{\mathbf{H}}^{2k-1}(X)$ with
$$\delta_1(\hat{c}_k(E,\nabla))=c_k(\Omega^{\nabla})\quad\text{and}\quad
\delta_2(\hat{c}_k(E,\nabla))=c_k(E)$$
where $c_k(E)$ is the $k$th integral Chern class and $c_k(\Omega^{\nabla})$
is the Chern-Weil form representing $c_k(E)\otimes \mathbb{R}$ in the curvature of
$\nabla$. Setting $\hat{c}(E)=1+\hat{c}_1+\hat{c}_2+...$, they showed
$$\hat{c}(E\oplus E',\nabla \oplus \nabla ')=\hat{c}(E,\nabla)*\hat{c}(E',\nabla ').$$

When $X$ is a complex manifold, we can take the projections
$$\hat{d}_k(E,\nabla) \equiv
\Pi_k(\hat{c}_k(E,\nabla)) \in \hat{\mathbf{H}}^{2k-1}(X,k).$$
By equations above and Proposition 3.8, we have
$$\delta_1(\hat{d}_k(E,\nabla))=\pi_k(c_k(\Omega^{\nabla}))\quad\text{and}\quad
\delta_2(\hat{d}_k(E,\nabla))=c_k(E).$$

Suppose that $E$ is holomorphic and is provided with a hermitian metric $h$.
Let $\nabla$ be the associated canonical hermitian connection. Then
$c_k(\Omega^{\nabla})$ is of type $k,k$ and we have
$$\delta_1(\hat{d}_k(E,\nabla))=\pi_k(c_k(\Omega^{\nabla}))=0 \quad\Longrightarrow\quad
\hat{d}_k(E,\nabla)\in \ker(\delta_1)=H^{2k}_{\mathcal{D}}(X,\mathbb{Z}(k)).$$

\begin{prop}
The class $\hat{d}_k(E,\nabla)\in H^{2k}_{\mathcal{D}}(X,\mathbb{Z}(k))$ defined above
is independent of the choice of hermitian metric.
\end{prop}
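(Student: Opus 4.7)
The plan is the following. Let $h_0, h_1$ be two hermitian metrics on $E$ with canonical hermitian connections $\nabla_0, \nabla_1$. To show $\hat d_k(E,\nabla_0)=\hat d_k(E,\nabla_1)$ in $H^{2k}_{\mathcal{D}}(X,\mathbb{Z}(k))$ it suffices, by linearity of $\Pi_k$, to verify that the Cheeger-Simons difference $\hat c_k(E,\nabla_1)-\hat c_k(E,\nabla_0)\in\hat{\mathbf{H}}^{2k-1}(X)$ lies in $\ker\Pi_k$. Standard Cheeger-Simons theory guarantees that this difference sits in $\hat{\mathbf{H}}^{2k-1}_\infty(X)$ and is represented by a smooth transgression form $TC_k(\nabla_0,\nabla_1)$, so by Lemma 3.7 the task reduces to finding such a smooth representative $a$ with $\pi_k(a)=0$.

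I would exhibit the representative explicitly using the convex path $h_t=(1-t)h_0+t h_1$ of hermitian metrics, with canonical hermitian connections $\nabla_t$. For any smooth family $\{\nabla_t\}$ the transgression form is given by the classical formula
$$TC_k \;=\; k\int_0^1 P\bigl(\dot\nabla_t,\ \Omega^{\nabla_t},\ldots,\Omega^{\nabla_t}\bigr)\,dt \;\in\; \mathcal{E}^{2k-1}(X),$$
with $P$ the symmetric polarization of $c_k$. Because every $\nabla_t$ is compatible with the holomorphic structure of $E$, the $(0,1)$-part of $\nabla_t$ is the $t$-independent operator $\bar\partial_E$, so $\dot\nabla_t$ is of pure bidegree $(1,0)$; because each $\nabla_t$ is the canonical hermitian connection for $h_t$, the curvature $\Omega^{\nabla_t}$ is of pure bidegree $(1,1)$. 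The integrand therefore has bidegree $(1,0)+(k-1)(1,1)=(k,k-1)$, and so does $TC_k$.

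Since $TC_k$ has holomorphic index $k$, the projection $\pi_k(TC_k)$ vanishes. Hence the smooth spark $(TC_k,0)$, which represents $\hat c_k(E,\nabla_1)-\hat c_k(E,\nabla_0)$ in $\hat{\mathbf{H}}^{2k-1}(X)$, lies in $\ker\Pi_k$ by Lemma 3.7. This proves $\Pi_k\hat c_k(E,\nabla_0)=\Pi_k\hat c_k(E,\nabla_1)$, which is the proposition.

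The main technical obstacle is the bidegree count: one must use a family of canonical hermitian connections rather than a naive linear interpolation $(1-t)\nabla_0+t\nabla_1$, because the curvature of the latter acquires a spurious $(2,0)$-part and the argument breaks. With the correct path, everything reduces to bookkeeping inside the Chern-Weil polynomial, and the remaining ingredients (path-independence of the transgression class up to exact forms, and the characterization of $\ker\Pi_k$ furnished by Lemma 3.7) are either classical or already set up in the paper.
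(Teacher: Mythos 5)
Your proof is correct, and the underlying idea is the same as the paper's: represent the difference $\hat c_k(E,\nabla_1)-\hat c_k(E,\nabla_0)$ by a smooth transgression form, do a bidegree count to see that its $\pi_k$-projection vanishes, and invoke Lemma 3.7. The one substantive difference is your choice of path, and your closing claim about it is wrong: the paper uses exactly the ``naive'' linear interpolation $\nabla_t=t\nabla_1+(1-t)\nabla_0$, and the argument does not break. In a local holomorphic frame $\theta_t=t\theta_1+(1-t)\theta_0$ is still of type $(1,0)$, so $\Omega_t=d\theta_t-\theta_t\wedge\theta_t$ has only $(1,1)$ and $(2,0)$ components; the spurious $(2,0)$ part is harmless because the integrand $C_k(\nabla_1-\nabla_0,\Omega_t,\dots,\Omega_t)$ then has components only of types $(k+j,\,k-1-j)$ with $j\ge 0$, all of holomorphic degree $\ge k$, which is all that $\pi_k(T)=0$ requires --- one does not need $T$ to be of pure type $(k,k-1)$. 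Your path of Chern connections $\nabla^{h_t}$ for $h_t=(1-t)h_0+th_1$ buys a slightly cleaner count (curvature of pure type $(1,1)$, integrand of pure type $(k,k-1)$) at the cost of having to note that the path of metrics is smooth and that the transgression class is path-independent up to exact forms; both are true and standard, so your argument stands, but the ``technical obstacle'' you identify is not one.
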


\begin{proof}
Let $h_0,h_1$ be hermitian metrics on $E$ with canonical connections
$\nabla_0,\nabla_1$ respectively. Then
$$\hat{c}_k(E,\nabla_1)-\hat{c}_k(E,\nabla_0)=[T]$$
where $[T]$ is the differential character represented by the smooth transgression form
$$T=T(\nabla_1,\nabla_0) \equiv k \int_0^1 C_k(\nabla_1-\nabla_0,\Omega_t,...,\Omega_t)dt$$
where $C_k(X_1,...,X_k)$ is the polarization of the $k$th elementary symmetric function
and where $\Omega_t$ is the curvature of the connection
$\nabla_t \equiv t\nabla_1+(1-t)\nabla_0$. Fix a local holomorphic frame field for $E$
and let $H_i$ be the hermitian matrix representing the metric $h_i$ with respect to
this trivialization. Then
$$\nabla_1-\nabla_0=\theta_1-\theta_0 \text{  where  }
\theta_j \equiv \partial H_j\cdot H_j^{-1}.$$
In this framing, $\nabla_t=d+\theta_t$ where $\theta_t=t\theta_1+(1-t)\theta_0$
and so its curvature $\Omega_t=d\theta_t-\theta_t\wedge\theta_t$ only has Hodge components
of type $1,1$ and $2,0$. It follows that the Hodge components
$$T^{p,q}=0 \text{  for  } p<q.$$
So we have $\hat{d}_k(E,\nabla_1)-\hat{d}_k(E,\nabla_0)=\Pi_k([T])=0$.

\end{proof}

\begin{rmk}
In the proof of last Proposition, it is easy to see that we can choose any connection compatible
to the complex structure ( $\nabla^{0,1}=\bar{\partial}$ ) to define the Chern classes in Deligne cohomology.
\end{rmk}

By the proposition above, each holomorphic vector bundle of rank $r$ has a well defined
total Chern class in Deligne cohomology
$$ \hat{d}(E)=1+\hat{d}_1(E)+...+\hat{d}_r(E) \in \bigoplus_{j=0}^r H_{\mathcal{D}}^{2j}(X,\mathbb{Z}(j)).$$

Denote by $\mathcal{V}^k(X)$ the set of isomorphism classes of holomorphic vector bundles of rank
$k$ on $X$, and by $\mathcal{V}(X)=\coprod_{k\geq 0}\mathcal{V}^k(X)$ the additive
monoid under Whitney sum.

\begin{thm}
On any complex manifold there is a natural transformation of functors
$$\hat{d}:\mathcal{V}(X) \rightarrow \bigoplus_j H_{\mathcal{D}}^{2j}(X,\mathbb{Z}(j))$$
with the property that:
\begin{enumerate}
  \item $\hat{d}(E\oplus F)=\hat{d}(E)*\hat{d}(F)$,
  \item $\hat{d}:\mathcal{V}^1(X) \rightarrow 1+H_{\mathcal{D}}^{2}(X,1)$ is an isomorphism,
  \item under the natural map $\kappa: H_{\mathcal{D}}^{2j}(X,\mathbb{Z}(j)) \rightarrow H^{2j}(X,\mathbb{Z})$,
  $\kappa\circ \hat{d}=c$ (the total integral Chern class).

\end{enumerate}
\end{thm}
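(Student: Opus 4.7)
The plan is to derive each of the three properties by reducing to known facts about the Cheeger-Simons Chern classes $\hat{c}_k$ and pushing them through the ring epimorphism $\Pi_k:\hat{\mathbf{H}}^{2k-1}(X)\to \hat{\mathbf{H}}^{2k-1}(X,k)$. Functoriality in a holomorphic map $f:Y\to X$ follows from Theorem 3.11 together with the facts that pullback of a holomorphic bundle stays holomorphic and that the canonical hermitian connection pulls back to a connection compatible with the complex structure, so by Remark 8.2 one may use $f^*\nabla$ to define $\hat{d}_k(f^*E)$; then $f^*\hat{d}_k(E,\nabla)=\hat{d}_k(f^*E,f^*\nabla)$ because $f^*\hat{c}_k$ commutes with $\Pi_k$.

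For (1), start with the Cheeger-Simons Whitney formula $\hat{c}(E\oplus F,\nabla\oplus\nabla')=\hat{c}(E,\nabla)*\hat{c}(F,\nabla')$ in $\hat{\mathbf{H}}^*(X)$. Choose canonical hermitian connections. Since $\Pi=\bigoplus_k\Pi_k$ is a ring homomorphism (Theorem 3.6) and by construction $\hat{d}_k=\Pi_k\hat{c}_k$, applying $\Pi$ to both sides yields $\hat{d}(E\oplus F)=\hat{d}(E)*\hat{d}(F)$ in $\bigoplus_j \hat{\mathbf{H}}^{2j-1}(X,j)$; the product of classes from $\ker\delta_1=H_{\mathcal D}^{2j}(X,\mathbb Z(j))$ lands in $\ker\delta_1$ by Theorem 6.5 (since the form $e$ is zero for both factors).

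For (2), use Example 6.3: $H_{\mathcal D}^2(X,\mathbb Z(1))\cong H^1(X,\mathcal O^*)$ is the Picard group of holomorphic line bundles. I would identify $\hat{d}_1$ with this classical isomorphism by unpacking the construction: for a line bundle $L$ with hermitian metric $h$ and canonical connection $\nabla$, the Cheeger-Simons $\hat{c}_1(L,\nabla)$ is represented (in the smooth hyperspark model) by a \v{C}ech--de Rham cocycle built from a trivialization $\{s_\alpha\}$ of $L$, the connection one-forms $\theta_\alpha=\partial\log h_\alpha$, and the integer cocycle $r_{\alpha\beta\gamma}=\frac{1}{2\pi i}\log(g_{\alpha\beta}g_{\beta\gamma}g_{\gamma\alpha})$ of transition functions $g_{\alpha\beta}$. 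Projecting by $\pi_1$ kills everything of type $(p,q)$ with $p\ge 1$, and the surviving data $(\log g_{\alpha\beta},\, r_{\alpha\beta\gamma})$ is precisely the \v{C}ech representative of $[L]\in H^1(X,\mathcal O^*)$ under the standard isomorphism $H^1(X,\mathcal O^*)\cong H_{\mathcal D}^2(X,\mathbb Z(1))$ coming from the short exact sequence $0\to\mathbb Z\to\mathcal O\to\mathcal O^*\to 0$. Bijectivity then follows from the classical identification.

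For (3), apply the natural map $\kappa:H_{\mathcal D}^{2k}(X,\mathbb Z(k))\to H^{2k}(X,\mathbb Z)$, which coincides with the restriction of $\delta_2:\hat{\mathbf{H}}^{2k-1}(X,k)\to H^{2k}(X,\mathbb Z)$ to the subgroup $\ker\delta_1$ (as shown by the diagram in Proposition 3.4 of the introduction). By Proposition 3.10, $\delta_2\circ\Pi_k=\delta_2$, so $\kappa\hat{d}_k(E)=\delta_2\hat{d}_k(E,\nabla)=\delta_2\hat{c}_k(E,\nabla)=c_k(E)$ by the Cheeger-Simons property.

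The main obstacle is step (2): carefully matching the \v{C}ech-Dolbeault hyperspark representative of $\Pi_1\hat{c}_1(L,\nabla)$ with the standard \v{C}ech cocycle for $L\in H^1(X,\mathcal O^*)$, using the explicit isomorphism $\varphi_1$ constructed in the proof of Theorem 6.11. Once this bookkeeping is done, injectivity and surjectivity fall out because both groups are classified by the same cocycle data.
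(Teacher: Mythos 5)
Your proposal is correct and follows essentially the same route as the paper: the Whitney formula for Cheeger--Simons classes pushed through $\Pi_k$ (using that the Deligne product is defined by lifting, multiplying in $\hat{\mathbf{H}}^*(X)$, and projecting) for (1), the identification $H^2_{\mathcal{D}}(X,\mathbb{Z}(1))\cong H^1(X,\mathcal{O}^*)$ for (2), and $\delta_2(\hat{c}_k)=c_k(E)$ together with $\delta_2\circ\Pi_k=\delta_2$ for (3). You supply more detail than the paper on naturality and on matching $\hat{d}_1$ with the classical \v{C}ech cocycle of a line bundle, but these are elaborations of the same argument rather than a different approach.
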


\begin{proof}
(1)Suppose $E$ and $F$ are holomorphic bundles with hermitian connections
$\nabla$ and $\nabla'$, then we have $\hat{d}(E)=1+\hat{d}_1(E)+\hat{d}_2(E)+\cdots=
1+\Pi_1(\hat{c}_1(E,\nabla))+\Pi_2(\hat{c}_2(E,\nabla))+\cdots$ and similarly
$\hat{d}(F)=1+\Pi_1(\hat{c}_1(F,\nabla'))+\Pi_2(\hat{c}_2(F,\nabla'))+\cdots$.

Since $$\hat{c}(E\oplus F,\nabla \oplus \nabla ')=\hat{c}(E,\nabla)*\hat{c}(F,\nabla '),$$
we have
\begin{eqnarray*}
\hat{d}_k(E\oplus F)&=&\Pi_k(\hat{c}_k(E\oplus F,\nabla \oplus \nabla '))\\
&=&\Pi_k(\sum_{i=0}^{k}\hat{c}_i(E,\nabla)\cdot\hat{c}_{k-i}(F,\nabla'))\\
&=&\sum_{i=0}^{k}\Pi_i(\hat{c}_i(E,\nabla))\cdot\Pi_{k-i}(\hat{c}_{k-i}(F,\nabla'))\\
&=&\sum_{i=0}^{k}\hat{d}_i(E)\cdot\hat{d}_{k-i}(F).
\end{eqnarray*}
It is easy to see the second to last equality from our definition of product of Deligne cohomology classes.
Recall when we defined the product of two Deligne classes, we first lifted them to
two sparks, then did multiplication and projected the product back.

(2) is true because $H_{\mathcal{D}}^{2}(X,\mathbb{Z}(1))\cong H^1(X,\mathcal{O}^*)$.

(3) follows $\delta_2(\hat{d}_k(E,\nabla))=c_k(E)$.
\end{proof}

Following Grothendieck we define the holomorphic $K$-theory of $X$ to be the
quotient $$K_{\text{hol}}(X)\equiv \mathcal{V}(X)^{+}/\sim$$
where $\sim$ is the equivalence relation generated by setting
$[E]\sim[E'\oplus E'']$ when there exists a short exact sequence of
holomorphic bundles $0\rightarrow E' \rightarrow E \rightarrow E''\rightarrow 0$.
The next theorem tells us the natural transformation $\hat{d}$ defined above
descends to a natural transformation
$$\hat{d}:K_{\text{hol}}(X)\rightarrow\bigoplus_j H_{\mathcal{D}}^{2j}(X,\mathbb{Z}(j)).$$

\begin{thm}
For any short exact sequence of holomorphic vector bundles on $X$
$$0\rightarrow E' \rightarrow E \rightarrow E''\rightarrow 0$$
one has $\hat{d}(E)=\hat{d}(E')*\hat{d}(E'')$.
\end{thm}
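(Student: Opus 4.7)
The plan is to combine a smooth splitting of the exact sequence with Theorem 8.3 (Whitney for direct sums), closing the gap via a Chern-Simons transgression together with the connection-independence of $\hat{d}_k$ established in the remark after Proposition 8.1.

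First I would choose a $C^{\infty}$ splitting $\sigma\colon E\xrightarrow{\sim}E'\oplus E''$ of the underlying smooth sequence, which exists by paracompactness. Pick hermitian metrics $h',h''$ on $E',E''$ with canonical hermitian connections $\nabla',\nabla''$, and transport the direct-sum data back to $E$ via $\sigma$, obtaining a unitary connection $\nabla_0$ on $E$. Its $(0,1)$-part is $\sigma^{*}(\bar{\partial}_{E'}\oplus\bar{\partial}_{E''})$, differing from $\bar{\partial}_E$ by the second fundamental form $\beta\in\mathcal{E}^{0,1}(\mathrm{Hom}(E'',E'))$, which satisfies the Maurer-Cartan relations $\bar{\partial}_0\beta=0$ and $\beta\wedge\beta=0$ coming from the integrability of both holomorphic structures.

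Applying Theorem 8.3 to the trivial sequence $0\to E'\to E'\oplus E''\to E''\to 0$, pushing through $\sigma$, and projecting by $\Pi_k$ exactly as in the proof of that theorem yields
\[
\Pi_k\bigl(\hat{c}_k(E,\nabla_0)\bigr)=\sum_{i+j=k}\hat{d}_i(E')*\hat{d}_j(E''),
\]
so the theorem reduces to the identity $\Pi_k(\hat{c}_k(E,\nabla_0))=\hat{d}_k(E)$. To compare, set $\nabla_1=\nabla_0+\beta$; then $\nabla_1^{0,1}=\bar{\partial}_E$, so by the remark after Proposition 8.1, $\hat{d}_k(E)=\Pi_k\hat{c}_k(E,\nabla_1)$. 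The Chern-Simons transgression between $\nabla_0$ and $\nabla_1$ has the form $T=k\int_0^1 C_k(\beta,\Omega_t,\ldots,\Omega_t)\,dt$, where $\Omega_t=\Omega_0+t\,\partial_{\nabla_0}\beta$ remains of pure Hodge type $(1,1)$ (using $\beta\wedge\beta=0$); consequently $T$ has pure bidegree $(k-1,k)$ and $\pi_k T=T$.

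The main obstacle is the final step: showing that the spark $(T,0)$ is equivalent to the zero spark in $\hat{\mathbf{H}}^{2k-1}(X,k)$, i.e., producing $b\in\mathcal{D}'^{2k-2}(X,k)$ and a closed $s\in\mathcal{IF}^{2k-1}(X)$ with $T=d_kb+\Psi_k(s)$. The cleanest route is to integrate the transgression against an auxiliary holomorphic parameter: assemble the $\bar{\partial}_t=\bar{\partial}_0+t\beta$ into a holomorphic bundle $\mathcal{E}$ on $\mathbb{C}\times X$ (integrable by the Maurer-Cartan relations on $\beta$), and then exploit a Künneth-type comparison of analytic Deligne cohomology showing that the projection $p\colon\mathbb{C}\times X\to X$ induces an isomorphism on $H^{2k}_{\mathcal{D}}(-,\mathbb{Z}(k))$; this forces $\hat{d}_k(\mathcal{E})=p^{*}\alpha$ and hence $\hat{d}_k(E_t)=i_t^{*}\hat{d}_k(\mathcal{E})=\alpha$ independent of $t$, so in particular $\hat{d}_k(E_0)=\hat{d}_k(E_1)$. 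Alternatively, one may construct a Bott-Chern style primitive directly from the data $(\bar{\partial}_0,\beta)$ by a local homotopy in the Dolbeault bicomplex. Granted this, $\hat{d}_k(E)=\Pi_k\hat{c}_k(E,\nabla_0)=\sum_{i+j=k}\hat{d}_i(E')*\hat{d}_j(E'')$, completing the proof.
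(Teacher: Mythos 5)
Your overall strategy matches the paper's: choose a $C^{\infty}$ splitting, reduce to the direct sum via the Whitney formula of Theorem 8.3, and compare the two relevant connections on the single underlying smooth bundle by a Chern--Simons transgression form $T$, using the nilpotency of the second fundamental form ($\beta\wedge\beta=0$, $\bar{\partial}_0\beta=0$) to see that $\Omega_t$ stays of type $(1,1)$ and hence that $\pi_k(T)=T^{k-1,k}$. Up to that point the argument is sound (and your choice $\nabla_1=\nabla_0+\beta$, legitimized by Remark 8.2, is a mild simplification of the paper's comparison of the two hermitian connections for the same metric, where $\eta$ also has a $(1,0)$ component).

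The gap is in the final step, which is in fact the heart of the theorem. A $d_k$-closed form of type $(k-1,k)$ need not represent the zero class in $\hat{\mathbf{H}}^{2k-1}(X,k)$: it can carry a nontrivial Dolbeault class and hence a nontrivial point of the intermediate Jacobian $\mathcal{J}^k(X)\subset H^{2k}_{\mathcal{D}}(X,\mathbb{Z}(k))$. Your proposed shortcut --- that $p\colon\mathbb{C}\times X\to X$ induces an isomorphism on $H^{2k}_{\mathcal{D}}(-,\mathbb{Z}(k))$, forcing $\hat{d}_k(E_t)$ to be constant in $t$ --- is false: analytic Deligne cohomology is not homotopy invariant. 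Already for $k=1$ one has $H^{2}_{\mathcal{D}}(-,\mathbb{Z}(1))=H^1(-,\mathcal{O}^*)$, and $H^1(\mathbb{C}\times X,\mathcal{O})$ contains $H^0(\mathbb{C},\mathcal{O})\otimes H^1(X,\mathcal{O})$, which is far larger than $H^1(X,\mathcal{O})$; more to the point, holomorphic families of bundles genuinely move in the intermediate Jacobian (this is exactly what the Abel--Jacobi map detects), so constancy of $\hat{d}_k(E_t)$ cannot follow from a general invariance principle --- it must use the special triangular structure of this family. Your alternative (``a Bott--Chern style primitive by a local homotopy'') gestures at the right answer but a local primitive is not enough; one needs a \emph{global} $(k-1,k-1)$-form $S$ with $T^{k-1,k}=\bar{\partial}S$. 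The paper supplies exactly this in Lemma 8.8 by an explicit computation: for $\Psi_k=\mathrm{tr}$,
$$T^{k-1,k}=k\,\bar{\partial}\int_0^1-\mathrm{tr}\bigl(\eta^{0,1}\wedge\eta^{1,0}\wedge(\Omega_t^{1,1})^{k-2}\bigr)\,t\,dt,$$
using the identities $\eta^{0,1}\wedge\eta^{0,1}=0$ and $\bar{\partial}\bigl((\Omega_t^{1,1})^{n}\bigr)=-t[(\Omega_t^{1,1})^{n},\eta^{0,1}]$, and then extends to general invariant polynomials $\Phi$ by writing them in terms of the $\Psi_j$. Without this computation (or a correct substitute, e.g.\ an argument that $t\mapsto\hat{d}_k(E_t)$ is holomorphic into $\mathcal{J}^k$ and constant on $\mathbb{C}^*$, in the spirit of Proposition 7.3), your proof is incomplete.
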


\begin{proof}
We have $E'\oplus E''\cong E$ as smooth bundles, so we consider them as the
same bundle with different holomorphic structures. The purpose is to show these
two holomorphic bundles have the same total Chern class valued in
Deligne cohomology. The idea of the proof is the following.
We fix a hermitian metric on this smooth bundle, choose local holomorphic bases for
those two holomorphic structures respectively, and calculate the hermitian connections
with respect to them. Then we calculate the smooth transgression form which
represents the difference of Cheeger-Simons Chern classes
of these two holomorphic bundles, and show that under the projection
$\Pi_k$, this transgression form represents a zero spark class in
$\hat{\mathbf{H}}^{2k-1}(X,k)$.
Hence $\hat{d}(E)=\hat{d}(E'\oplus E'')=\hat{d}(E')*\hat{d}(E'')$.

Choose a $C^{\infty}$-splitting \xymatrix{ & 0 \ar[r] & E'
\ar[r]^{i} & E \ar@<.7ex>[r]^{\pi}  & E''\ar@<.7ex>[l]^{\sigma} \ar[r] & 0.}

Fix hermitian metrics $h_1$ and $h_2$ for $E'$ and $E''$ respectively, and
define a hermitian metric $h=h_1\oplus h_2$ on $E$ via the smooth isomorphism
$(i,\sigma):E'\oplus E''\rightarrow E.$

Over a small open set $U\subset X$, we choose a local holomorphic basis
$\{e_1,e_2,...,e_m\}$ for $E'$ and a local holomorphic basis
$\{e_{m+1},e_{m+2},...,e_{m+n}\}$ for $E''$. Then choose a local holomorphic basis
$\{\tilde{e}_1,\tilde{e}_2,...,\tilde{e}_m,\tilde{e}_{m+1},\tilde{e}_{m+2},...,\tilde{e}_{m+n}\}$
for $E$ such that $\tilde{e}_i=e_i$ for $1\leq i\leq m$ and $\tilde{e}_{m+j}$
is a holomorphic lift of $e_{m+j}$ for $1\leq j\leq n$. Assume $g=(g_{ij})$ is the transition
matrix for these two bases, i.e. $\tilde{e}_i=\sum_{j=1}^{m+n}g_{ij}e_j$. Then it is easy to know
$g$ has the form
\[ \begin{matrix}
g=
\begin{pmatrix} I_m & 0\\A &I_n \end{pmatrix}
& \text{ and } & g^{-1}=
\begin{pmatrix} I_m & 0\\-A &I_n \end{pmatrix}
\end{matrix} \]
where $I_m$ and $I_n$ is the identity matrices of rank $m$ and $n$, and $A$ is the
nontrivial part of $g$.

Let $H_1$ and $H_2$ be the hermitian matrices representing the metrics $h_1$
and $h_2$ with respect to the bases $\{e_1,e_2,...,e_m\}$ and
$\{e_{m+1},e_{m+2},...,e_{m+n}\}$. Let $H$ and $\tilde{H}$ be the hermitian
matrices representing the metric $h$ with respect to bases $\{e_i\}_{i=1}^{m+n}$ and
$\{\tilde{e}_i\}_{i=1}^{m+n}$. Then we have
\[ \begin{matrix}
H=
\begin{pmatrix} H_1 & 0\\0 &H_2 \end{pmatrix}
& \text{ and } & \tilde{H}=
gHg^*
\end{matrix} \]
where $g^*=\bar{g}^{t}$ is the transpose conjugate of $g$.

Fix the hermitian metric $h$,  we calculate the canonical hermitian connections
with respect to two holomorphic structures. For $E'\oplus E''$, the hermitian
connection $\nabla_0$ can be written locally as the matrix ( w.r.t. the basis $\{e_i\}$ )
$$\theta_0=\partial H\cdot H^{-1}.$$
For $E$, the hermitian connection $\nabla_1$ can be written locally
as the matrix ( w.r.t. the basis $\{\tilde{e}_i\}$ )
$$\tilde{\theta}_1=\partial \tilde{H}\cdot \tilde{H}^{-1}=\partial(gHg^*)(gHg^*)^{-1}
=\partial g\cdot g^{-1}+g\partial H\cdot H^{-1}g^{-1}+gH\partial g^* (g^*)^{-1}H^{-1}g^{-1}.$$
We change the basis and write $\nabla_1$ as the matrix with respect to
the basis $\{e_i\}$
\begin{eqnarray*}
\theta_1&=&d(g^{-1})\cdot g+g^{-1}\tilde{\theta}_1g\\
&=&-g^{-1}dg+g^{-1}(\partial g\cdot g^{-1}+g\partial H\cdot H^{-1}g^{-1}+gH\partial g^* (g^*)^{-1}H^{-1}g^{-1})g\\
&=&-g^{-1}dg+g^{-1}\partial g+\partial H\cdot H^{-1}+H\partial g^*(g^*)^{-1}H^{-1}\\
&=&-g^{-1}\bar{\partial}g+\theta_0+H\partial g^*(g^*)^{-1}H^{-1}\\
&=&\theta_0+H\partial g^*(g^*)^{-1}H^{-1}-g^{-1}\bar{\partial}g
\end{eqnarray*}

Let $\eta\equiv\theta_1-\theta_0=H\partial g^*(g^*)^{-1}H^{-1}-g^{-1}\bar{\partial}g$ and
$\eta^{1,0}=H\partial g^*(g^*)^{-1}H^{-1}$, $\eta^{0,1}=-g^{-1}\bar{\partial}g$ be the
$(1,0)$ and $(0,1)$ components of $\eta$ respectively. Then we have
\[ \begin{matrix}
\eta^{1,0}=
&\begin{pmatrix} H_1 & 0\\0 &H_2 \end{pmatrix}
\begin{pmatrix} 0 & \partial A^*\\0 &0 \end{pmatrix}
\begin{pmatrix} I_m & -A^*\\0 &I_n \end{pmatrix}
\begin{pmatrix} H_1^{-1} & 0\\0 &H_2^{-1} \end{pmatrix}
&=
\begin{pmatrix} 0 & H_1\partial A^*H_2^{-1}\\0 &0 \end{pmatrix}
\end{matrix} \]
and
\[ \begin{matrix}
\eta^{0,1}=
&-\begin{pmatrix} I_m & 0\\-A &I_n \end{pmatrix}
\begin{pmatrix} 0 & 0\\ \bar{\partial}A &0 \end{pmatrix}
&=
-\begin{pmatrix} 0 & 0\\\bar{\partial}A &0 \end{pmatrix}
\end{matrix}. \]
Define a family of connections $\nabla_t$ with
connection matrices $\theta_t=\theta_0+t\eta$ for $0\leq t\leq 1$. Let
$\Omega_t=d\theta_t-\theta_t\wedge\theta_t$ be the curvature of the connection $\theta_t$.
It is easy to see
$$\Omega_t^{0,2}=t\bar{\partial}\eta^{0,1}-t^2(\eta^{0,1}\wedge\eta^{0,1})=0$$
and
$$\Omega_t^{1,1}=\bar{\partial}\theta_0+t(\bar{\partial}\eta^{1,0}+\partial\eta^{0,1}
-\theta_0\wedge\eta^{0,1}-\eta^{0,1}\wedge\theta_0)-t^2(\eta^{1,0}\wedge\eta^{0,1}+
\eta^{0,1}\wedge\eta^{1,0}).$$
Note that
\[ \begin{matrix}
\eta^{0,1}\wedge\eta^{0,1}=
&\begin{pmatrix} 0 & 0\\-\bar{\partial}A &0 \end{pmatrix}\wedge
\begin{pmatrix} 0 & 0\\ -\bar{\partial}A &0 \end{pmatrix}
&=0
\end{matrix}. \]
We will use this trick again in the later calculation.

Suppose that $\Phi$ is an symmetric invariant $k$-multilinear function on the
Lie algebra $\mathfrak{gl}_{m+n}(\mathbb{C})$. Then the two connections
$\nabla_0$ and $\nabla_1$ on $E$ give rise to two Cheeger-Simons differential characters
$\hat{\Phi}_0$ and $\hat{\Phi}_1$, and the difference
$$\hat{\Phi}_0-\hat{\Phi}_1=[T_{\Phi}]$$
where $[T_{\Phi}]$ is the character associated to the smooth form
$$T_{\Phi}=k\int_0^1\Phi(\eta,\Omega_t,\Omega_t,...,\Omega_t)dt.$$

Our goal is to show that $\Pi_k([T_{\Phi}])=[\pi_k(T_{\Phi})]$ represents a
zero spark class. So it suffices to show $\pi_k(T_{\Phi})$ is a
$d_k$-exact form. In fact, we shall show $\pi_k(T_{\Phi})$ is a form of pure type $(k-1,k)$
and equals $\bar{\partial}S=d_kS$ for some $(k-1,k-1)$ form $S$.

\begin{lem}
$T^{i,2k-1-i}_{\Phi}=0$ for $i<k-1$, i.e. $\pi_k(T_{\Phi})=T^{k-1,k}_{\Phi}$, where
$T^{i,2k-1-i}_{\Phi}$ is the $(i,2k-1-i)$ Hodge component of $T_{\Phi}$.
\end{lem}
\begin{proof}
Note that we have $\Omega_t^{0,2}=0$, i.e. $\Omega_t$ is of type $(1,1)$ and $(2,0)$.
Hence it is easy to see $T_{\Phi}^{i,2k-1-i}=0$ for  $i<k-1$ from the
expression $T_{\Phi}=k\int_0^1\Phi(\eta,\Omega_t,\Omega_t,...,\Omega_t)dt$.
\end{proof}

In order to show $T_{\Phi}$ is $\bar{\partial}$-exact for general $\Phi$, we
first show $T_{\Psi_k}$ is $\bar{\partial}$-exact for $\Psi_k(A_1,A_2,...,A_k)=
tr(A_1\cdot A_2\cdot ... \cdot A_k)$.
\begin{lem}
Let $\Psi_k(A_1,A_2,...,A_k)=tr(A_1\cdot A_2\cdot ... \cdot A_k)$ and
$T=T_{\Psi_k}=k\int_0^1tr(\eta\wedge(\Omega_t)^{k-1})dt$. Then
$T^{k-1,k}$ is $\bar{\partial}$-exact. Explicitly, $T^{0,1}=0$ when $k=1$, and for $k\geq2$,
$$T^{k-1,k}=k\int_0^1tr(\eta^{0,1}\wedge(\Omega_t^{1,1})^{k-1})dt
=k\bar{\partial}\int_0^1-tr(\eta^{0,1}\wedge\eta^{1,0}\wedge
(\Omega_t^{1,1})^{k-2})\cdot tdt.$$
\end{lem}
\begin{proof}
When $k=1$, $T^{0,1}=\int_0^1tr(\eta^{0,1})dt=0$ since $\eta^{0,1}$ has the form
\[\begin{pmatrix} 0 & 0\\-\bar{\partial}A &0 \end{pmatrix}.\]

When $k\geq2$, it is easy to see
$$T^{k-1,k}=k\int_0^1tr(\eta^{0,1}\wedge(\Omega_t^{1,1})^{k-1})dt$$ by comparing Hodge components on
both sides. So it suffices to show the identity
$$tr(\eta^{0,1}\wedge(\Omega_t^{1,1})^{k-1})=-\bar{\partial}
tr(\eta^{0,1}\wedge\eta^{1,0}\wedge (\Omega_t^{1,1})^{k-2})\cdot t.$$

First, we introduce some basic identities.
We know that in our theory, Chern classes in Deligne cohomology
are independent of the choice of hermitian metric, and the question above is local.
So we fix local bases and choose hermitian metrics $h_1$ and $h_2$ such that $H_1=I_m$ and $H_2=I_n$ locally.
Now we have
\[ \begin{matrix}
\eta^{1,0}
&=
\begin{pmatrix} 0 & H_1\partial A^*H_2^{-1}\\0 &0 \end{pmatrix}
&=
\begin{pmatrix} 0 & \partial A^*\\0 &0 \end{pmatrix}
&\text{ and }
&\eta^{0,1}
&=\begin{pmatrix} 0 & 0\\-\bar{\partial}A &0 \end{pmatrix}
\end{matrix} \]

$$\Omega_t^{1,1}=t(\bar{\partial}\eta^{1,0}+\partial\eta^{0,1})-
t^2(\eta^{1,0}\wedge\eta^{0,1}+\eta^{0,1}\wedge\eta^{1,0})
=td\eta-t^2\eta\wedge\eta.$$
Note in the equation above, we use the fact $\eta^{1,0}$ is $\partial$-exact,
$\eta^{0,1}$ is $\bar{\partial}$-exact, and $\eta^{1,0}\wedge\eta^{1,0}=0$,
$\eta^{0,1}\wedge\eta^{0,1}=0$ by matrix multiplication.

By calculation, we have
$$[\Omega_t^{1,1},\eta]\equiv\Omega_t^{1,1}\wedge\eta-\eta\wedge\Omega_t^{1,1}=
td(\eta\wedge\eta).$$
Take $(2,1)$ and $(1,2)$ components respectively, we have
$$[\Omega_t^{1,1},\eta^{1,0}]=t\partial(\eta\wedge\eta) \quad\text{and}\quad
[\Omega_t^{1,1},\eta^{0,1}]=t\bar{\partial}(\eta\wedge\eta).$$

The next observation is
$$\bar{\partial}\Omega_t^{1,1}=-t^2\bar{\partial}(\eta\wedge\eta)=
-t[\Omega_t^{1,1},\eta^{0,1}].$$
Using identities above, it is easy to conclude
$$\bar{\partial}((\Omega_t^{1,1})^n)=
-t[(\Omega_t^{1,1})^n,\eta^{0,1}].$$

Now we are ready to calculate.
\begin{eqnarray*}
& &-\bar{\partial}tr(\eta^{0,1}\wedge\eta^{1,0}\wedge (\Omega_t^{1,1})^n)\cdot t\\
&=&-t\cdot tr(\bar{\partial}(\eta^{0,1}\wedge\eta^{1,0}\wedge (\Omega_t^{1,1})^{n}))\\
&=&-t\cdot tr(-\eta^{0,1}\wedge\bar{\partial}\eta^{1,0}\wedge (\Omega_t^{1,1})^{n}+
\eta^{0,1}\wedge\eta^{1,0}\wedge \bar{\partial}(\Omega_t^{1,1})^{n})\\
&=&-t\cdot tr(-\eta^{0,1}\wedge\bar{\partial}\eta^{1,0}\wedge (\Omega_t^{1,1})^{n}+
\eta^{0,1}\wedge\eta^{1,0}\wedge (-t)[(\Omega_t^{1,1})^n,\eta^{0,1}])\\
&=&t\cdot tr(\eta^{0,1}\wedge\bar{\partial}\eta^{1,0}\wedge (\Omega_t^{1,1})^{n}+t
\eta^{0,1}\wedge\eta^{1,0}\wedge (\Omega_t^{1,1})^n\wedge \eta^{0,1}-
t\eta^{0,1}\wedge\eta^{1,0}\wedge \eta^{0,1}\wedge (\Omega_t^{1,1})^n)\\
&=&t\cdot tr(\eta^{0,1}\wedge\bar{\partial}\eta^{1,0}\wedge (\Omega_t^{1,1})^{n}-
t\eta^{0,1}\wedge\eta^{1,0}\wedge \eta^{0,1}\wedge (\Omega_t^{1,1})^n)+t^2\cdot
tr(\eta^{0,1}\wedge\eta^{1,0}\wedge (\Omega_t^{1,1})^n\wedge \eta^{0,1})\\
&=&tr(\eta^{0,1}\wedge(t\bar{\partial}\eta^{1,0}-t^2\eta^{1,0}\wedge \eta^{0,1})
\wedge (\Omega_t^{1,1})^{n})+t^2\cdot
tr(\eta^{0,1}\wedge \eta^{0,1}\wedge\eta^{1,0}\wedge (\Omega_t^{1,1})^n)\\
&\stackrel{*}{=}&tr(\eta^{0,1}\wedge\Omega_t^{1,1}\wedge (\Omega_t^{1,1})^{n})+0\\
&=&tr(\eta^{0,1}\wedge  (\Omega_t^{1,1})^{n+1})
\end{eqnarray*}
Put $n=k-2$, we are done.

Note that in the second to last equality, we use the trick $\eta^{0,1}\wedge \eta^{0,1}=0$
several times.

\end{proof}

Recall in the Chern-Weil theory, the $k$th Chern character of a vector bundle is
represented by the form $\frac{1}{k!}\Psi_k(\Omega,\Omega,...,\Omega)=\frac{1}{k!}tr(\Omega^k)$
where $\Omega$ is the curvature of any connection.
Any symmetric invariant $k$-multilinear function $\Phi$ on the
Lie algebra $\mathfrak{gl}_{m+n}(\mathbb{C})$ is generated by $\Psi_1$, $\Psi_2$, ... $\Psi_k$,
i.e. we have $$\Phi=\sum_{n=1}^k\sum_{i_1+\cdots+i_n=k \atop i_1>0,\cdots,i_n>0}
a_{i_1i_2\cdots i_n}\Psi_{i_1}\otimes\Psi_{i_2}\otimes...\otimes\Psi_{i_n}.$$

Hence, $T_{\Phi}=k\int_0^1\Phi(\eta,\Omega_t,\Omega_t,...,\Omega_t)dt$ where
$\Phi(\eta,\Omega_t,\Omega_t,...,\Omega_t)$ is a sum with summands like
$\Psi_{i_1}(\eta,\Omega_t,\Omega_t,...,\Omega_t)\Psi_{i_2}(\Omega_t,\Omega_t,...,\Omega_t)
...\Psi_{i_n}(\Omega_t,\Omega_t,...,\Omega_t)$. For $j>1$, $\Psi_{i_j}(\Omega_t,\Omega_t,...,\Omega_t)$
is a closed $(i_j,i_j)$ form representing $i_j!$ times the $i_j$th Chern character.
And from last lemma, we know $\Psi_{i_1}(\eta,\Omega_t,\Omega_t,...,\Omega_t)$ has types
$(i_1-1,i_1)$ and higher, and its $(i_1-1,i_1)$ component is $\bar{\partial}$-exact.
Therefore, $T_{\Phi}$ is of types $(k-1,k)$ and higher, and
$\pi_k(T)=T^{k-1,k}=\bar{\partial}S^{k-1,k-1}$ for some $(k-1,k-1)$
form $S^{k-1,k-1}$.
\end{proof}

\begin{rmk}
By the theorems on uniqueness of Chern classes in Deligne cohomology in \cite{B}, \cite{EV} and
\cite{Br2}, our theory on Chern classes is equivalent to all others. In particular, when the setting
is algbraic, Chern classes defined above are compatible with Grothendieck-Chern classes under the
cycle map $\psi: CH^*(X)\rightarrow H^{2*}_{\mathcal{D}}(X,\mathbb{Z}(*))$.
\end{rmk}

Cheeger and Simons also defined Chern characters for vector bundles with connections,
which are located in rational differential characters $\hat{H}^*(X,\mathbb{R}/\mathbb{Q})$.
For holomorphic vector bundles, we can project Chern characters in differential characters to get
Chern characters in rational Deligne cohomology $H^{2*}_{\mathcal{D}}(X,\mathbb{Q}(*))$. Define
$$\widehat{dch}_k(E)\equiv\Pi_k(\hat{ch}_k(E,\nabla))\in H^{2k}_{\mathcal{D}}(X,\mathbb{Q}(k)),$$
where $\nabla$ is the hermitian connection associated to a hermitian metric.

Since $\hat{ch}(E\oplus E',\nabla\oplus\nabla')=\hat{ch}(E,\nabla)+\hat{ch}(E',\nabla')$,
we have
\begin{thm}
If $E$ and $F$ be two holomorphic vector bundles on complex manifold $X$,
then $$\widehat{dch}(E\oplus F)=\widehat{dch}(E)+\widehat{dch}(F).$$
\end{thm}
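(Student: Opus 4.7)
The plan is to deduce this directly from the additivity of the Cheeger--Simons Chern character together with the fact that the projection $\Pi_k$ of Theorem 3.6 is a group homomorphism. Fix hermitian metrics $h_E$ and $h_F$ on $E$ and $F$ respectively, and let $\nabla$ and $\nabla'$ be the associated canonical hermitian connections. Then $\nabla \oplus \nabla'$ is the canonical hermitian connection on $E \oplus F$ with respect to the product metric $h_E \oplus h_F$.

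First I would recall from Cheeger--Simons \cite{CS} that the refined Chern character is additive under Whitney sum of connections:
\[
\hat{ch}(E\oplus F,\nabla \oplus \nabla')=\hat{ch}(E,\nabla)+\hat{ch}(F,\nabla')\in \hat{H}^{*}(X,\mathbb{R}/\mathbb{Q}).
\]
Next, since the hermitian connections $\nabla$, $\nabla'$, $\nabla \oplus \nabla'$ are all compatible with the holomorphic structure, the argument in Proposition 8.1 (and the remark following it) shows that $\Pi_k\hat{ch}_k$ lands in the rational Deligne cohomology $H^{2k}_{\mathcal{D}}(X,\mathbb{Q}(k))$, so the definition
\[
\widehat{dch}_k(\,\cdot\,)\equiv \Pi_k(\hat{ch}_k(\,\cdot\,,\nabla))
\]
makes sense and (by the analogue of Proposition 8.1 for the Chern character) is independent of the choice of metric.

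Since $\Pi_k : \hat{\mathbf{H}}^{2k-1}(X) \rightarrow \hat{\mathbf{H}}^{2k-1}(X,k)$ is a group homomorphism (Theorem 3.6), I apply $\Pi_k$ to both sides of the Cheeger--Simons additivity equation to obtain
\[
\widehat{dch}_k(E\oplus F)=\Pi_k\hat{ch}_k(E\oplus F,\nabla\oplus\nabla')=\Pi_k\hat{ch}_k(E,\nabla)+\Pi_k\hat{ch}_k(F,\nabla')=\widehat{dch}_k(E)+\widehat{dch}_k(F),
\]
and summing over $k$ gives the desired identity $\widehat{dch}(E\oplus F)=\widehat{dch}(E)+\widehat{dch}(F)$.

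The only real content is metric-independence of $\widehat{dch}_k$; everything else is formal from the Cheeger--Simons theorem and the homomorphism property of $\Pi_k$. Metric-independence is proved exactly as in Proposition 8.1: the difference $\hat{ch}_k(E,\nabla_1)-\hat{ch}_k(E,\nabla_0)$ is represented by a smooth transgression form $T$ built from curvatures $\Omega_t$ of connections $\nabla_t$ that are compatible with the holomorphic structure, so $\Omega_t$ has no $(0,2)$ component and hence $T^{p,q}=0$ for $p<q$; therefore $\pi_k(T)=0$ and $\Pi_k$ kills the difference. This is the only step that uses holomorphicity, and it is the main (though mild) obstacle -- once it is in place the theorem follows by applying a homomorphism to an already-known identity.
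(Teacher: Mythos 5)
Your proposal is correct and follows exactly the route the paper takes: the paper simply notes the Cheeger--Simons additivity $\hat{ch}(E\oplus E',\nabla\oplus\nabla')=\hat{ch}(E,\nabla)+\hat{ch}(E',\nabla')$ and lets the homomorphism $\Pi_k$ do the rest, with metric-independence handled by the transgression-form argument of Proposition 8.1. You have merely written out the details that the paper leaves implicit.
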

Moreover,
\begin{thm}
For any short exact sequence of holomorphic vector bundles on $X$
$$0\rightarrow E' \rightarrow E \rightarrow E''\rightarrow 0$$
one has $\widehat{dch}(E)=\widehat{dch}(E')+\widehat{dch}(E'')$.
\end{thm}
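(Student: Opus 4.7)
The plan is to mirror the proof of Theorem 8.4 almost verbatim, replacing the multiplicative Chern class $\hat{c}_k$ by the additive Chern character $\hat{ch}_k$ and exploiting that the $k$-th Chern character is (up to the factor $1/k!$) precisely the invariant polynomial $\Psi_k(A_1,\dots,A_k)=\operatorname{tr}(A_1\cdots A_k)$ evaluated on the curvature. Since Theorem 8.7 already handles direct sums, the only thing to do is reduce the SES case to the direct sum case in $H^{2k}_{\mathcal{D}}(X,\mathbb{Q}(k))$.

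First I would fix a $C^{\infty}$-splitting $(i,\sigma)\colon E'\oplus E''\xrightarrow{\sim}E$, pick hermitian metrics $h_1,h_2$ on $E',E''$ and equip $E$ with the induced metric $h=h_1\oplus h_2$. This yields two hermitian connections on the same underlying smooth bundle: the connection $\nabla_0$ compatible with the product holomorphic structure of $E'\oplus E''$, and the connection $\nabla_1$ compatible with the given holomorphic structure on $E$. Cheeger--Simons theory then provides the transgression identity
\[
\hat{ch}_k(E,\nabla_1)-\hat{ch}_k(E'\oplus E'',\nabla_0)=[T_{ch_k}],\qquad T_{ch_k}=\tfrac{1}{(k-1)!}\int_0^1\Psi_k(\eta,\Omega_t,\dots,\Omega_t)\,dt,
\]
where $\eta=\nabla_1-\nabla_0$ and $\Omega_t$ is the curvature of $\nabla_t=t\nabla_1+(1-t)\nabla_0$.

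The key step is then to apply the local computation carried out in Lemma 8.6 to this very same transgression form. In local holomorphic frames adapted to $E'$ and $E''$, the change-of-basis matrix $g$ has the block form used in the proof of Theorem 8.4, so $\eta^{1,0}$ is strictly upper-block-triangular, $\eta^{0,1}$ is strictly lower-block-triangular, $\eta^{1,0}\wedge\eta^{1,0}=\eta^{0,1}\wedge\eta^{0,1}=0$, and $\Omega_t$ has no $(0,2)$ component. Exactly as in Lemma 8.5, this forces $T_{ch_k}^{i,2k-1-i}=0$ for $i<k-1$, so $\pi_k(T_{ch_k})=T_{ch_k}^{k-1,k}$. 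By Lemma 8.6 applied to $\Psi_k$,
\[
T_{ch_k}^{k-1,k}=\tfrac{k}{k!}\bar{\partial}\!\int_0^1-\operatorname{tr}\!\bigl(\eta^{0,1}\wedge\eta^{1,0}\wedge(\Omega_t^{1,1})^{k-2}\bigr)\,t\,dt,
\]
which is $\bar{\partial}$-exact and hence $d_k$-exact. Therefore $\pi_k(T_{ch_k})$ represents the zero class in $\hat{\mathbf{H}}^{2k-1}(X,k)\otimes\mathbb{Q}$, which by Proposition 3.5 sits inside $H^{2k}_{\mathcal{D}}(X,\mathbb{Q}(k))$. Consequently $\widehat{dch}_k(E)=\widehat{dch}_k(E'\oplus E'')$, and combining with Theorem 8.7 gives
\[
\widehat{dch}(E)=\widehat{dch}(E'\oplus E'')=\widehat{dch}(E')+\widehat{dch}(E'').
\]

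I do not expect a real obstacle here, because Lemma 8.6 was stated precisely for the universal invariant polynomials $\Psi_k$ generating all symmetric invariant polynomials, and the Chern character is itself a rational multiple of $\Psi_k$. The only thing to be mildly careful about is bookkeeping the constants ($k$ vs.\ $1/k!$) in the transgression, and confirming that the local normalization $H_1=I_m$, $H_2=I_n$ used in Lemma 8.6 is harmless since the Chern character in Deligne cohomology is independent of the choice of hermitian metric (by the same argument as Proposition 8.1, applied componentwise via $ch_k=\frac{1}{k!}\Psi_k$ of the curvature).
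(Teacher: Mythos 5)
Your proposal is correct and is exactly the argument the paper intends: the theorem is stated without an explicit proof precisely because it follows from the transgression computation of Theorem 8.4 specialized to $\Phi=\frac{1}{k!}\Psi_k$ (via Lemmas 8.5 and 8.6), combined with the direct-sum additivity of $\widehat{dch}$. Your bookkeeping of the constant $\frac{k}{k!}=\frac{1}{(k-1)!}$ and your remark on the harmlessness of the local normalization $H_1=I_m$, $H_2=I_n$ are both consistent with the paper's treatment.
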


\section{Bott Vanishing for Holomorphic Foliations}
In \cite{Bo}, Bott constructed a family of connections on the normal bundle of any smooth
foliation of a manifold and established the Bott vanishing theorem which says
the characteristic classes of the normal bundle are trivial in all sufficiently high degrees.
We shall show an analogue of the Bott vanishing theorem for Chern classes of the normal
bundle of a holomorphic foliation.

Suppose that $N$ is the normal bundle to a holomorphic foliation of codimension $p$ on
a complex manifold $X$. Then there are two natural families of connections to consider
on $N$, the family of Bott connections and the family of canonical hermitian connections.

\begin{prop}
$N$ is a holomorphic vector bundle on $X$ as above.
Let $P(c_1,...,c_q)$ be a polynomial in Chern classes which is of pure degree $2k$
with $k>2q$. Then the projection image of Cheeger-Simons Chern class $\Pi_k(P(\hat{c_1},...,\hat{c_q})) \in
\hat{H}^{2k-1}(X,k)$ for Bott connections agrees with the Chern class in Deligne cohomology
$P(\hat{d}_1,...,\hat{d}_q)$ for the canonical hermitian connections.

\end{prop}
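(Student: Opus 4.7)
The plan is to recycle the transgression argument used for Proposition 8.1, since the only property of the hermitian connection that was exploited there is compatibility with the holomorphic structure. Let $\nabla^B$ denote the chosen Bott connection on $N$ and let $\nabla^h$ denote a canonical hermitian connection associated to some hermitian metric. Both satisfy $\nabla^{0,1}=\bar{\partial}$, so $\eta:=\nabla^B-\nabla^h$ is an $\mathrm{End}(N)$-valued one-form of pure Hodge type $(1,0)$, and the linear path $\nabla_t:=(1-t)\nabla^h+t\nabla^B$ also has $\nabla_t^{0,1}=\bar{\partial}$, forcing its curvature $\Omega_t$ to have only components of type $(2,0)$ and $(1,1)$. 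The Cheeger-Simons transgression formula then yields $\hat{c}_k(N,\nabla^B)-\hat{c}_k(N,\nabla^h)=[T_k]$, where $T_k=k\int_0^1 C_k(\eta,\Omega_t,\ldots,\Omega_t)\,dt$ is a smooth $(2k-1)$-form and $C_k$ is the polarization of the $k$th elementary symmetric function.

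The crux is a Hodge type count: expanding each $\Omega_t$ as $\Omega_t^{2,0}+\Omega_t^{1,1}$ and using multilinearity of $C_k$, a term in which exactly $j$ of the $k-1$ curvature factors contribute their $(1,1)$ parts has Hodge type $(2k-1-j,\,j)$. Since $0\leq j\leq k-1$, every component of $T_k$ lies in $\mathcal{E}^{r,s}$ with $r\geq k$, so $\pi_k(T_k)=0$ and hence $\Pi_k([T_k])=0$ in $\hat{\mathbf{H}}^{2k-1}(X,k)$. This gives $\Pi_k(\hat{c}_k(N,\nabla^B))=\Pi_k(\hat{c}_k(N,\nabla^h))=\hat{d}_k(N)$ for each single Chern class.

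To promote this to the polynomial statement, I would invoke the compatibility $\Pi_{i+j}(\alpha\cdot\beta)=\Pi_i(\alpha)\cdot\Pi_j(\beta)$ implicit in the proof of Theorem 8.3, which lets $\Pi_k$ be distributed term by term: $\Pi_k(P(\hat{c}(\nabla^B)))=P(\Pi_\bullet\hat{c}(\nabla^B))$, and by the previous paragraph this equals $P(\hat{d}_1,\ldots,\hat{d}_q)$. The main obstacle is the Hodge type analysis of $T_k$, which is the essential new content; everything else follows by the same formal manipulation as in Proposition 8.1 and Theorem 8.3. The assumption $k>2q$ does not actually enter the argument above; it is presumably recorded to match the classical setting, where $P(c(N))$ already vanishes in integer cohomology by Bott's original vanishing theorem.
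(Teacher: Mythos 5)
Your proposal is correct and follows essentially the same route as the paper: both the Bott connection and the hermitian connection have connection forms of type $(1,0)$, so the curvature $\Omega_t$ along the linear path has only $(2,0)$ and $(1,1)$ components, and the resulting Hodge-type count kills the transgression form under $\pi_k$. The only cosmetic difference is that the paper transgresses the single invariant polynomial $\Phi$ with $P(\sigma_1,\dots,\sigma_q)=\Phi(X,\dots,X)$ all at once instead of arguing class-by-class and then invoking multiplicativity of $\Pi$; your remark that $k>2q$ is not used here (it enters only in Theorem 9.2 via Bott's vanishing theorem) is also accurate.
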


\begin{proof}
In fact, this is a direct corollary of Remark 8.2 since Bott connections are compatible with
the holomorphic structure of $N$.

Let $\nabla$ be a Bott connection, $\tilde{\nabla}$ be the canonical hermitian
connection for some hermitian metric and $\theta$, $\tilde{\theta}$ be their
connection forms. Notice that both $\theta$, $\tilde{\theta}$ are of type
$(1,0)$.

Let $\Phi(X_1,...,X_k)$ be the symmetric invariant $k$-multilinear function
on the Lie algebra $\mathfrak{gl}_q(\mathbb{C})$ such that
$P(\sigma_1(X),...,\sigma_q(X))=\Phi(X,...,X)$ where $\sigma_j$ is the $j^{th}$
elementary symmetric function of the eigenvalues of $X$. Then the difference
between the Cheeger-Simons Chern class associated to $P$ for the two
connections $\nabla$ and $\tilde{\nabla}$ is the character associated to the
smooth form
$$T=k\int_0^1 \Phi(\theta-\tilde{\theta},\Omega_t,...,\Omega_t)dt$$ where
$\Omega_t=d\theta_t-\theta_t\wedge\theta_t$. Since $\theta_t$ is of type
$(1,0)$ and $\Omega_t$ is of type $(1,1)$ and $(2,0)$, we have
$T^{p,q}=0$ for all $p<q$.

Therefore,
\begin{eqnarray*}
& &P(\hat{d}_1,...,\hat{d}_q)-\Pi_k(P(\hat{c}_1(N,\nabla),...,\hat{c}_q(N,\nabla)))\\
&=&\Pi_k(P(\hat{c}_1(N,\tilde{\nabla}),...,\hat{c}_q(N,\tilde{\nabla})))-
\Pi_k(P(\hat{c}_1(N,\nabla),...,\hat{c}_q(N,\nabla)))\\
&=&\Pi_k(T)=0
\end{eqnarray*}
\end{proof}

\begin{thm}
Let $N$ be a holomorphic bundle of rank $q$ on a complex manifold $X$. If $N$
is (isomorphic to) the normal bundle of a holomorphic foliation of $X$, then
for every polynomial $P$ of pure degree $k>2q$, the associated refined
Chern class satisfies
$$P(\hat{d}_1(N),...,\hat{d}_q(N)) \in
Im [H^{2k-1}(X, \mathbb{C}^{\times}) \rightarrow
H_{\mathcal{D}}^{2k}(X,\mathbb{Z}(k))].$$
\end{thm}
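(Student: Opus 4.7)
The plan is to apply Proposition 9.1 to rewrite the refined Chern class in terms of a Bott connection, then invoke classical Bott vanishing at the form level, and finally use the $3\times 3$ grid together with the natural identification of $\ker\delta_1$ to place the class inside the image of $H^{2k-1}(X,\mathbb{C}^{\times})$.

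Pick a connection $\nabla$ on $N$ extending the Bott partial connection along $F$ and compatible with the holomorphic structure. By Proposition 9.1,
\[
P(\hat{d}_1(N),\ldots,\hat{d}_q(N)) = \Pi_k\bigl(P(\hat{c}_1(N,\nabla),\ldots,\hat{c}_q(N,\nabla))\bigr).
\]
The Bott condition $\iota_V \Omega^\nabla = 0$ for every $V \in F$ places the curvature in $\Lambda^2 F^\perp \otimes \mathrm{End}(N)$, where the conormal bundle $F^\perp \cong N^*$ has real rank $2q$. Hence the Chern--Weil form $P(c_1(\Omega^\nabla),\ldots,c_q(\Omega^\nabla))$ of total form-degree $2k$ lies in $\Lambda^{2k} F^\perp$, which is zero whenever $2k > 2q$. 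The hypothesis $k > 2q$ is more than enough, so classical Bott vanishing gives $P(c_1(\Omega^\nabla),\ldots,c_q(\Omega^\nabla)) \equiv 0$ as a smooth form on $X$.

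Therefore $\delta_1(P(\hat{c}_1(N,\nabla),\ldots)) = P(c_1(\Omega^\nabla),\ldots) = 0$, so the Cheeger--Simons class lies in $\ker\delta_1 \subset \hat{\mathbf{H}}^{2k-1}(X)$. By the middle row of the $3\times 3$ grid of Proposition 2.8 applied to the de Rham--Federer spark complex, $\ker\delta_1 = H^{2k-1}(G^*)$, where $G^*$ is the cone of $\Psi:\mathcal{IF}^* \to \mathcal{D}'^*$. Because $\mathcal{IF}^*$ is an acyclic resolution of $\mathbb{Z}$ and $\mathcal{D}'^*$ of $\mathbb{C}$, this cone is quasi-isomorphic to $\mathrm{Cone}(\mathbb{Z} \hookrightarrow \mathbb{C})$, which via the exponential isomorphism $\mathbb{C}/\mathbb{Z}(1) \cong \mathbb{C}^{\times}$ is quasi-isomorphic to $\mathbb{C}^{\times}[-1]$. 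Consequently $\ker\delta_1 \cong H^{2k-1}(X,\mathbb{C}^{\times})$. The projection $\Pi_k$ then maps $\ker\delta_1$ into $\ker\delta_1 \subset \hat{\mathbf{H}}^{2k-1}(X,k) = H^{2k}_{\mathcal{D}}(X,\mathbb{Z}(k))$, and what remains is to check that this induced map $H^{2k-1}(X,\mathbb{C}^{\times}) \to H^{2k}_{\mathcal{D}}(X,\mathbb{Z}(k))$ coincides with the canonical map coming from the morphism of complexes of sheaves $(\mathbb{Z} \to \mathbb{C}) \hookrightarrow \mathbb{Z}_{\mathcal{D}}(k) = (\mathbb{Z} \to \mathcal{O} \to \Omega^1 \to \cdots \to \Omega^{k-1})$, where $\mathbb{C} \hookrightarrow \mathcal{O}$ embeds constant functions.

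Granting this compatibility, the class $P(\hat{d}_1(N),\ldots,\hat{d}_q(N))$ is visibly in the image of $H^{2k-1}(X,\mathbb{C}^{\times}) \to H^{2k}_{\mathcal{D}}(X,\mathbb{Z}(k))$, as desired. The main obstacle I expect is precisely this last identification: matching the spark-theoretic isomorphism $\ker\delta_1 \cong H^{2k-1}(X,\mathbb{C}^{\times})$, after projecting by $\Pi_k$, to the canonical Deligne-theoretic map requires writing down explicit quasi-isomorphisms between $\mathrm{Cone}(\mathcal{IF}^* \to \mathcal{D}'^*)$ and the Deligne complex, and tracking their behaviour under $\Pi_k$. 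This is fundamentally a naturality statement, but spelling it out carefully is the most delicate part of the argument.
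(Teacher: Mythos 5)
Your proposal follows essentially the same route as the paper: use Proposition 9.1 to replace $P(\hat{d}_1(N),\dots,\hat{d}_q(N))$ by $\Pi_k$ of the Cheeger--Simons class computed with a Bott connection, invoke classical Bott vanishing to kill the Chern--Weil form, conclude that the Cheeger--Simons class lies in $\ker\delta_1\cong H^{2k-1}(X,\mathbb{C}^{\times})$, and then push through the commutative ladder relating the short exact sequence for $\hat{\mathbf{H}}^{2k-1}(X)$ to the one for $\hat{\mathbf{H}}^{2k-1}(X,k)$. The paper simply writes down that two-row commutative diagram and reads off the conclusion, whereas you insist on identifying the induced left vertical arrow with the canonical sheaf-level map $\mathrm{Cone}(\mathbb{Z}\to\mathbb{C})\to\mathbb{Z}_{\mathcal{D}}(k)$; that extra care is reasonable but not logically required for the statement as phrased, since the map in the theorem is exactly the left column of that diagram.

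One local inaccuracy is worth correcting. The Bott condition is not $\iota_V\Omega^{\nabla}=0$ for all $V\in F$, and the curvature does not lie in $\Lambda^2F^{\perp}\otimes\mathrm{End}(N)$; rather, $\Omega^{\nabla}(V,W)=0$ for $V,W\in F$, so $\Omega^{\nabla}$ lies in the ideal $F^{\perp}\wedge\Lambda^1T^*X\otimes\mathrm{End}(N)$ generated by the rank-$2q$ conormal bundle. Consequently a Chern--Weil form of degree $2k$ lies in $(F^{\perp})^{\wedge k}\wedge\Lambda^kT^*X$ and vanishes precisely when $k>2q$ --- not when $2k>2q$ as your argument would give. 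If your version were literally true, Bott vanishing would hold already for $k>q$, which is false in general. Since the theorem assumes $k>2q$, the vanishing you need is still available and the rest of your argument goes through unchanged.
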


\begin{proof}
We have the following commutative diagram:

 \xymatrix{& 0 \ar[r] & H^{2k-1}(X,\mathbb{C}^{\times}) \ar[r] \ar[d]
 & \hat{H}^{2k-1}(X) \ar[r] \ar[d] & \mathcal{Z}_{\mathbb{Z}}^{2k}(X) \ar[r] \ar[d] & 0 \\
& 0 \ar[r] & H_{\mathcal{D}}^{2k}(X,\mathbb{Z}(k)) \ar[r] & \hat{H}^{2k-1}(X,k) \ar[r] &
\mathcal{Z}_{\mathbb{Z}}^{2k}(X,k) \ar[r] & 0}

We know $P(\hat{d}_1(N),...,\hat{d}_q(N)) \in H_{\mathcal{D}}^{2k}(X,\mathbb{Z}(k))$ and
$P(\hat{c_1},...,\hat{c_q}) \in \hat{H}^{2k-1}(X)$. By last proposition, we know they
have the same images in $\hat{H}^{2k-1}(X,k)$. And by Bott vanishing theorem, we have
$P(\hat{c_1},...,\hat{c_q}) \in H^{2k-1}(X,\mathbb{C}^{\times})$. Then we get the
conclusion.
\end{proof}

\section{Nadel Invariants for Holomorphic Vector Bundles}
In his beautiful paper \cite{N}, Nadel introduced interesting relative invariants for holomorphic
vector bundles. Explicitly, for two holomorphic vector bundles $E$ and $F$ over a
complex manifold $X$ and a $C^{\infty}$ isomorphism $f:E\rightarrow F$, Nadel
defined invariants $\mathscr{E}^k(E,F,f)\in H^{2k-1}(X,\mathcal{O})$ and
$\mathscr{E}^k(E,F)\in H^{2k-1}(X,\mathcal{O})/H^{2k-1}(X,\mathbb{Z})$.
He also conjectured that these invariants should coincide with a component
of the Abel-Jacobi image of $k!(ch_k(E)-ch_k(F))\in CH^k(X)$ when the setting is algebraic.
In \cite{Be}, Berthomieu developed a relative K-theory and gave an affirmative answer to
Nadel's conjecture.

In this section, we shall generalize Nadel theory and give a short proof of his conjecture.
From our point of view, if $E$ and $F$ are two holomorphic
vector bundles whose underlying $C^{\infty}$ vector bundles are isomorphic, then
their usual Chern classes coincide, and the difference
of their $k$th Chern classes in Deligne cohomology $\hat{d}_k(E)-\hat{d}_k(F)$
is located in the intermediate Jacobians $\mathcal{J}^k(X)$. Hence we can define relative
invariants for a pair $(E,F)$ directly in $\mathcal{J}^k(X)$.
In particular, we shall express the difference of $k$th Chern character
$\widehat{dch}_k(E)-\widehat{dch}_k(F)$ by transgression forms whose
$(0,2k-1)$ components are exactly the Nadel invariants.
This will prove Nadel's conjecture in more general context ( not necessarily algebraic ).

Let $E$ and $F$ be two holomorphic vector bundles over complex manifold $X$, and
$g:E \rightarrow F$ be a $C^{\infty}$ bundle isomorphism. Fix a hermitian metric $h$
for $E$ and $F$. Over a small open set $U\subset X$, choose local holomorphic bases $\{e_i\}_{i=1}^{r}$
and $\{f_i\}_{i=1}^{r}$ for $E$ and $F$ respectively, and denote also by $g$ the transition
matrix of the $C^{\infty}$ bundle isomorphism with respect to bases $\{e_i\}_{i=1}^{r}$
and $\{f_i\}_{i=1}^{r}$. Let $H$ and $\tilde{H}$ be the hermitian
matrices representing the metric $h$ with respect to bases $\{e_i\}_{i=1}^{r}$
and $\{f_i\}_{i=1}^{r}$. Then we have $\tilde{H}=gHg^*$.

Now we calculate the canonical hermitian connections
with respect to two holomorphic structures. For $E$, the hermitian
connection $\nabla_0$ can be written locally as the matrix ( w.r.t. the basis $\{e_i\}$ )
$$\theta_0=\partial H\cdot H^{-1}.$$
For $F$, the hermitian connection $\nabla_1$ can be written locally
as the matrix ( w.r.t. the basis $\{f_i\}$ )
$$\tilde{\theta}_1=\partial \tilde{H}\cdot \tilde{H}^{-1}=\partial(gHg^*)(gHg^*)^{-1}
=\partial g\cdot g^{-1}+g\partial H\cdot H^{-1}g^{-1}+gH\partial g^* (g^*)^{-1}H^{-1}g^{-1}.$$
We change the basis and write $\nabla_1$ as the matrix with respect to
the basis $\{e_i\}$
\begin{eqnarray*}
\theta_1&=&d(g^{-1})\cdot g+g^{-1}\tilde{\theta}_1g\\
&=&-g^{-1}dg+g^{-1}(\partial g\cdot g^{-1}+g\partial H\cdot H^{-1}g^{-1}+gH\partial g^* (g^*)^{-1}H^{-1}g^{-1})g\\
&=&-g^{-1}dg+g^{-1}\partial g+\partial H\cdot H^{-1}+H\partial g^*(g^*)^{-1}H^{-1}\\
&=&\theta_0+H\partial g^*(g^*)^{-1}H^{-1}-g^{-1}\bar{\partial}g
\end{eqnarray*}

Let $\eta=\theta_1-\theta_0=H\partial g^*(g^*)^{-1}H^{-1}-g^{-1}\bar{\partial}g$ and
$\eta^{1,0}=H\partial g^*(g^*)^{-1}H^{-1}$, $\eta^{0,1}=-g^{-1}\bar{\partial}g$ be the
$(1,0)$ and $(0,1)$ components of $\eta$ respectively. Define a family of connections
$\nabla_t$ with connection matrices $\theta_t=\theta_0+t\eta$ for $0\leq t\leq 1$. Let
$\Omega_t=d\theta_t-\theta_t\wedge\theta_t$ be the curvature of the connection $\theta_t$.
$$\Omega_t^{0,2}=t\bar{\partial}\eta^{0,1}-t^2(\eta^{0,1}\wedge\eta^{0,1})=
t\bar{\partial}(-g^{-1}\bar{\partial}g)-t^2(-g^{-1}\bar{\partial}g)\wedge(-g^{-1}\bar{\partial}g)
=(t-t^2)g^{-1}\bar{\partial}g\wedge g^{-1}\bar{\partial}g.$$

Suppose that $\Phi$ is an symmetric invariant $k$-multilinear function on the
Lie algebra $\mathfrak{gl}_{m+n}(\mathbb{C})$. Then the two connections
$\nabla_0$ and $\nabla_1$ on $E$ give rise to two Cheeger-Simons differential characters
$\hat{\Phi}_0$ and $\hat{\Phi}_1$, and the difference
$$\hat{\Phi}_0-\hat{\Phi}_1=[T_{\Phi}]$$
where $[T_{\Phi}]$ is the character associated to the smooth form
$$T_{\Phi}=k\int_0^1\Phi(\eta,\Omega_t,\Omega_t,...,\Omega_t)dt.$$

The difference of Chern classes $\Pi_k([T_{\Phi}])=\Pi_k(\hat{\Phi}_0-\hat{\Phi}_1)\in \mathcal{J}^k$
is a spark class which is represented by the smooth form $\pi_k(T_{\Phi})$.
In particular, when $\Phi$ is the form $\Phi(A_1,A_2,...,A_k)=tr(A_1\cdot A_2\cdot...\cdot A_k)$
( representing $k!$ times the $k$th Chern character ),
we have
$$T_{\Phi}=k\int_0^1tr(\eta\wedge(\Omega_t)^{k-1})dt \qquad \text{and} \qquad
\pi_k(T_{\Phi})=\pi_k(k\int_0^1tr(\eta\wedge(\Omega_t)^{k-1})dt).$$
The $(0,2k-1)$ component
\begin{eqnarray*}
\pi_k(T_{\Phi})^{0,2k-1}&=&T_{\Phi}^{0,2k-1}\\
&=&k\int_0^1tr(\eta^{0,1}\wedge(\Omega_t^{0,2})^{k-1})dt\\
&=&k\int_0^1tr(-g^{-1}\bar{\partial}g\wedge((t-t^2)g^{-1}\bar{\partial}g\wedge g^{-1}\bar{\partial}g)^{k-1})dt\\
&=&k\int_0^1-t^{k-1}(1-t)^{k-1}tr((g^{-1}\bar{\partial}g)^{2k-1})dt\\
&=&-k\int_0^1t^{k-1}(1-t)^{k-1}dt\cdot tr((g^{-1}\bar{\partial}g)^{2k-1})\\
&=&\pm\mathscr{E}^k(E,F,g)
\end{eqnarray*}

\begin{defn}
$X$ is a complex manifold, $E$ and $F$ are two holomorphic vector bundles over $X$ whose underlying
$C^{\infty}$ vector bundle are isomorphic. Define Nadel-type invariants
$$\hat{\mathscr{E}}^k(E,F)\equiv k!(\widehat{dch}_k(E)-\widehat{dch}_k(F))=
[\pi_k(k\int_0^1tr(\eta\wedge(\Omega_t)^{k-1})dt)]\in \mathcal{J}^k.$$
\end{defn}

Note that we have a natural projection
$$\pi:\mathcal{J}^{k}\equiv H^{2k-1}(X,\mathbb{C})/F^{k}H^{2k-1}(X,\mathbb{C})\oplus H^{2k-1}(X,\mathbb{Z})
\rightarrow H^{2k-1}(X,\mathcal{O})/H^{2k-1}(X,\mathbb{Z}).$$
By the calculations above, we have
\begin{thm}
The $k$th Nadel invariant $\mathscr{E}^k(E,F)$ is the image of $\hat{\mathscr{E}}^k(E,F)$
under the map $\pi:\mathcal{J}^k\rightarrow H^{2k-1}(X,\mathcal{O})/H^{2k-1}(X,\mathbb{Z})$.
That is, Nadel's conjecture is true.
\end{thm}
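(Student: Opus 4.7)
The plan is to identify both $\hat{\mathscr{E}}^k(E,F)$ and $\mathscr{E}^k(E,F)$ by explicit representative differential forms, and then to observe that $\pi:\mathcal{J}^k\to H^{2k-1}(X,\mathcal{O})/H^{2k-1}(X,\mathbb{Z})$ is nothing but extraction of the Hodge component $(0,2k-1)$. At that point the theorem reduces to the $(0,2k-1)$-component calculation that has already been carried out just before the theorem statement.

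First I would note that, by construction, $\hat{\mathscr{E}}^k(E,F)\in\mathcal{J}^k$ is the spark class of $\pi_k(T_\Phi)\in\mathcal{E}^{2k-1}(X,k)$, which is $d_k$-closed because $dT_\Phi=\Phi(\Omega_1,\ldots)-\Phi(\Omega_0,\ldots)$ is of pure type $(k,k)$ and is therefore annihilated by $\pi_k$. Under the canonical isomorphism $H^*(\mathcal{E}^*(X,k),d_k)\cong H^*(X,k)\cong H^{2k-1}(X,\mathbb{C})/F^kH^{2k-1}(X,\mathbb{C})$, valid on a K\"ahler manifold by Hodge theory, the class of $\pi_k(T_\Phi)$ corresponds to its cohomology class computed via its Hodge decomposition $\pi_k(T_\Phi)=\sum_{r<k}T_\Phi^{r,2k-1-r}$. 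Passing to the quotient by integral classes yields the representative of $\hat{\mathscr{E}}^k(E,F)$ in $\mathcal{J}^k$.

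Next I would pin down the map $\pi$. Using the Hodge decomposition $H^{2k-1}(X,\mathbb{C})/F^k=\bigoplus_{r<k,\;r+s=2k-1}H^{r,s}(X)$ together with the Dolbeault isomorphism $H^{2k-1}(X,\mathcal{O})\cong H^{0,2k-1}(X)$, the projection $\pi$ is precisely projection onto the $(0,2k-1)$ summand. Hence $\pi(\hat{\mathscr{E}}^k(E,F))$ is represented in $H^{2k-1}(X,\mathcal{O})/H^{2k-1}(X,\mathbb{Z})$ by the $\bar\partial$-closed $(0,2k-1)$-form $T_\Phi^{0,2k-1}$. The explicit computation preceding the definition gives
$$T_\Phi^{0,2k-1}=-\Bigl(k\!\int_0^1 t^{k-1}(1-t)^{k-1}\,dt\Bigr)\,\mathrm{tr}\bigl((g^{-1}\bar\partial g)^{2k-1}\bigr),$$
which is, up to a nonzero constant and sign, Nadel's defining cocycle for $\mathscr{E}^k(E,F,g)\in H^{0,2k-1}_{\bar\partial}(X)\cong H^{2k-1}(X,\mathcal{O})$. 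Reducing modulo $H^{2k-1}(X,\mathbb{Z})$ yields $\mathscr{E}^k(E,F)$ and finishes the identification.

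The main obstacle I anticipate is checking the compatibility between the spark-theoretic and the Hodge-theoretic representations, i.e.\ that the inclusion $\mathcal{J}^k\hookrightarrow\hat{\mathbf{H}}^{2k-1}(X,k)$ from the $3\times 3$ diagram sends the class in $H^{2k-1}(X,\mathbb{C})/F^k$ of a $d_k$-closed smooth form $\alpha\in\mathcal{E}^{2k-1}(X,k)$ to the spark class of $\alpha$ itself. This is implicit in the top row of the $3\times 3$ diagram together with the identification $H^*(\mathcal{E}^*(X,k))\cong H^*(X,k)$ already used in Section~3, so it requires only a careful unwinding of the connecting morphisms rather than any new geometric input. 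Once that identification is in place, the rest of the proof is just the bookkeeping described above, combined with the explicit $(0,2k-1)$-component computation already displayed in the paper.
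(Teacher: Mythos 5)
Your proposal is correct and follows essentially the same route as the paper: the paper's proof is exactly the explicit computation of the $(0,2k-1)$ Hodge component of the transgression form $\pi_k(T_\Phi)$, identifying it (up to the normalizing constant) with Nadel's cocycle $\mathrm{tr}\bigl((g^{-1}\bar\partial g)^{2k-1}\bigr)$, together with the observation that $\pi$ is projection onto the $(0,2k-1)$ summand. Your extra care about the compatibility of the spark-theoretic and Hodge-theoretic descriptions of $\mathcal{J}^k$ is a reasonable elaboration of what the paper leaves implicit.
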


\bigskip

\noindent {\sc Ning Hao }\\
Mathematics Department, SUNY at Stony Brook\\
Stony Brook, NY 11794\\
Email: {\tt nhao@math.sunysb.edu}

\begin{thebibliography}{99}
\bibitem[B]{B}
A.A. Beilinson, {\it Higher regulators and values of L-functions}, J. Sov. Math.30, 2036-2070 (1985)
\bibitem[Be]{Be}
A. Berthomieu, {\it Proof of Nadel's conjecture and direct image for relative K-theory}, Bull. Soc. math. France
130(2), 253-307 (2002)
\bibitem[Bo]{Bo}
R. Bott, {\it On a topological abstruction to integrability}, Proc. of Symp. in pure math. 16 (1970)
\bibitem[Br1]{Br}
J.-L. Brylinski, {\it Loop Spaces, Characteristic Classes and Geometric Quantization},
Birkhauser, Boston 1993
\bibitem[Br2]{Br2}
J.-L. Brylinski, {\it Comparison of the Beilinson-Chern classes with the Chern-Cheeger-Simons classes},
Advances in geometry,  95--105, Progr. Math., 172, Birkhauser Boston, Boston, MA, 1999
\bibitem[BrM]{BrM}
J.-L. Brylinski, D. A. McLaughlin {\it \v{C}ech cocycles for characteristic classes},
Comm. Math. Phys. 178 (1996), no. 1, 225--236
\bibitem[C]{C}
J. Cheeger, {\it Multiplication of differential characters}, Instituto Nazionable di Alta Mathematica,
Symposia Mathematica, Vol. XI, 1973, 441-445
\bibitem[CS]{CS}
J. Cheeger and J. Simons, {\it Differential Character and Geometric Invariants}, Geometry and Topology,
LMN1167, Springer-Verlag, New York, 1985, 50-80
\bibitem[D]{D}
P. Deligne, {\it Th\'eorie de Hodge, II} Publ. I. H. E.S. 40, 1971, 5-58
\bibitem[De]{De}
C. Deninger, {\it Higher order operations in Deligne cohomology}, Invent. math. 120, 289-315 (1995)
\bibitem[EV]{EV}
H. Esnault and E. Viehweg, {\it Deligne-Beilinson cohomology}, Beilinson's conjectures on special values of L-functions
(Perspectives in Math., Vol 4.) Academic Press: New York 1988
\bibitem[F]{F}
H. Federer, {\it Goemetric Measure Theory}, Springer-Verlag, New York, 1969
\bibitem[H1]{H1}
N. Hao , {\it On the ring structure of spark characters}, arXiv:0808.0724
\bibitem[H2]{H2}
N. Hao , {\it Massey Products in Differential Characters and Deligne Cohomology},
preprint
\bibitem[Har]{Har}
R. Harvey, {\it Holomorphic chains and their boundaries},  Several complex variables (Proc. Sympos. Pure Math., Vol. XXX, Part 1, Williams Coll., Williamstown, Mass., 1975),  pp. 309--382. Amer. Math. Soc., Providence, R. I., 1977
\bibitem[HL1]{HL3}
R.~Harvey and H.B.~Lawson, Jr., {\it Geometric residue theorems}, Amer. J. Math. 117 (1995), no. 4, 829--873
\bibitem[HL2]{HL1}
R.~Harvey and H.B.~Lawson, Jr., {\it From sparks to grundles-differential characters}, Comm. in Analysis and
Geometry 14 (2006), 25-58 arXiv.math.DG/0306193
\bibitem[HL3]{HL2}
R.~Harvey and H.B.~Lawson, Jr., {\it D-bar sparks}, Proc. Lond. Math. Soc. (3) 97 (2008), no. 1, 1-30.
\bibitem[HLZ]{HLZ}
R.~Harvey and H.B.~Lawson, Jr.and J. Zweck, {\it The de Rham-Federer thoery of diffirential character duality},
Amer. J. Math. 125 (2003), 791-847
\bibitem[N]{N}
A. Nadel, {\it Invariants for holomorphic vector bundles}, Math. Ann. 309, 37-52, (1997)
\bibitem[V]{V}
C. Voisin, {\it Hodge theory and complex algebraic geometry}, Cambridge Studies in Advanced Mathematics, 76, 77,
Cambridge University Press, 2002
\bibitem[Z]{Z}
S. Zucker, {\it The Cheeger-Simons invariant as a Chern class}, Algebraic analysis, geometry, and number theory (Baltimore, MD, 1988), 397--417, Johns Hopkins Univ. Press, Baltimore, MD, 1989.

\end{thebibliography}
\end{document}